\documentclass[trcs]{informs3}

\OneAndAHalfSpacedXI 

\usepackage{endnotes}

\let\footnote=\endnote
\let\enotesize=\normalsize
\def\notesname{Endnotes}%
\def\makeenmark{\hbox to1.275em{\theenmark.\enskip\hss}}
\def\enoteformat{\rightskip0pt\leftskip0pt\parindent=1.275em
  \leavevmode\llap{\makeenmark}}

\def\R{\ensuremath{\mathbb{R}}}
\def\Z{\ensuremath{\mathbb{Z}}}

\newcommand{\set}[1]{\ensuremath{\mathcal{#1}}}
\newcommand*{\boldone}{\text{\usefont{U}{bbold}{m}{n}1}}
\newcommand*{\LS}{\texttt{DD-LS}}
\newcommand*{\LSt}{\texttt{DD-LS}~}

\usepackage{natbib}
 \bibpunct[, ]{(}{)}{,}{a}{}{,}%
 \def\bibfont{\small}%

\TheoremsNumberedBySection  

\EquationsNumberedThrough    
\usepackage{algorithm} 
\usepackage{color}
\usepackage{xcolor}
\usepackage{algpseudocode}
\usepackage{parallel}
\usepackage{hyperref}
\usepackage{cleveref}
\usepackage{amsmath}
\usepackage{theorem}
\usepackage{array}
\usepackage{makecell}
\usepackage{caption}
\usepackage{subcaption}
\usepackage{booktabs}
\usepackage{multirow}
\usepackage[colorinlistoftodos,prependcaption,textsize=small]{todonotes}
\usepackage{enumerate}
\newcommand{\aref}[1]{\hyperref[#1]{Appendix~\ref*{#1}}}
\definecolor{OriginalDarkBlue}{rgb}{0.1, 0.0, 0.9}
\colorlet{blue}{OriginalDarkBlue!80!white}

\usepackage{fancyhdr}
\usepackage[margin=1in]{geometry}
\begin{document}


\RUNAUTHOR{J. Deng and G. Pantuso}

\RUNTITLE{Mobility pricing under decision-dependent demand uncertainty}

\TITLE{Pricing mobility services under decision-dependent demand uncertainty: a carsharing case}

\ARTICLEAUTHORS{%
\AUTHOR{Jiali Deng}
\AFF{Department of Mathematical Sciences, University of Copenhagen, Copenhagen, 2100, \EMAIL{jd@math.ku.dk}} 
\AUTHOR{Giovanni Pantuso}
\AFF{Department of Mathematical Sciences, University of Copenhagen, Copenhagen, 2100, \EMAIL{gp@math.ku.dk}}
} 

\ABSTRACT{
The problem of pricing mobility services has attracted significant attention.
In most studies, uncertain demand is modeled as an exogenous random variable with known distribution. 
This assumption overlooks the likely effect of prices on user adoption decisions. To address this dependency, we formulate the pricing problem as a stochastic program with decision-dependent demand uncertainty. Specifically, we make the non-standard assumption that the probability distribution of demand depends on pricing decisions. 
We show that the problem can be written as a mixed-integer linear program whose size is exponential in the input parameters. To find exact numerical solutions we specialize the L-shaped method for stochastic programs with decision-dependent uncertainty. In particular, we devise efficient separation routines by proving closed-form primal and dual solutions to the involved subproblems. In addition, we develop problem-specific valid inequalities and cut-sharing mechanisms which significantly improve convergence. 
We show that the method outperforms by far a commercial solver used to solve the monolithic formulation. 
Furthermore, in a case study based on a real-world carsharing system, we show that incorporating decision-dependent uncertainty improves expected profits by $8.39\%$ compared to a benchmark that considers deterministic price-elastic demand, and by $8.53\%$ compared to a benchmark that considers exogenous random demand, on average. In addition, we evaluate the performance of preventive pricing and relocation decisions under two vehicle allocation policies. The results suggest that a controlled allocation of vehicles to customers can improve service rates while only marginally affecting profits. \color{black}
}
\KEYWORDS{carsharing pricing, decision-dependent demand uncertainty, L-shaped method, endogenous uncertainty, one-way carsharing}

\maketitle

\section{Introduction}
Shared mobility has become an important component of modern urban transportation systems. However, offering flexible on-demand mobility in a profitable manner poses significant management challenges. A central element of complexity is the necessity of making operational decisions without complete knowledge of near-future demand. 
Consequently, vehicle relocation has been widely studied as an instrument to prevent or ameliorate spatial mismatches between demand and supply, particularly in the carsharing domain, see, e.g., \citet{wang2010dynamic, nair2011fleet, weikl2013relocation, nourinejad2015vehicle, santos2015mip}. Relocations are, however, inherently expensive and prone to inefficiency \citep{lu2018optimizing}. For this reason, pricing-based demand management is gaining popularity. Real-world examples include, among others, the German shared mobility company \textit{Share Now} \citep{Christan2024sharenow}, which uses real-time price adjustments based on updated demand information, and the software company \textit{Zoba} \citep{zoba}, which updates service prices in a regular interval (e.g., every 30 minutes) given real-time supply and demand information. 

\textcolor{black}{In the context of one-way carsharing services, the question of how to effectively differentiate service prices by location and/or time has gained popularity.} The optimization literature has offered models and methods where prices act either as a substitute \citep{jorge2015trip, wang2019pricing, xie2019optimal, deng2025zonification} or a supplement \citep{huang2018solving, xu2018electric, lu2021performance, pantuso2022exact} to vehicle rebalancing efforts. Accordingly, \textit{carsharing system operators} (CSOs) use the pricing levers to mitigate the geographical or/and temporal demand-supply mismatch and, consequently, improve service rate and revenue potential. Hence, the relationship between prices and demand becomes an important component of the models.

In the available shared mobility research, the effect of prices on rental demand is typically modeled via their effects on demand location or volume as well as on individual mode choice. \citet{pfrommer2014dynamic} assumed that price incentives would induce users to return bikes to stations other than the one closest to their destination. \citet{singla2015incentivizing} extended the effect of prices on user's station choice to both pick-up and drop-off bicycle stations. In a one-way carsharing pricing problem studied by \citet{wang2019pricing}, similar price-induced demand location shifts were also considered on an individual user basis. Apart from this effect, \citet{wang2019pricing} modeled the total demand generated from each station as a reverse logistic function of pick-up prices. \citet{jorge2015trip} assumed that carsharing demand decreases linearly with price at a constant rate. 
Likewise, \citet{xie2019optimal} approximated the demand as a linear function of price with constant elasticity. \citet{xu2018electric} adopted an exponential function to capture the nonlinear relationship between demand and price. The authors truncate the demand function at a given price threshold to prevent the price from rising indefinitely. \citet{huang2018solving} modeled the behaviour of individual carsharing customers through a Logit model where travel time and price are accounted for as factors in the utility function. The portion of carsharing demand between two traffic zones was then determined by mode choice of individual customers. A similar approach was used by \citet{deng2025zonification}. The authors focus on the partition of the service region as well as zone-to-zone prices. In the above studies, carsharing demand is a deterministic function of price. Hence, while the effect of prices on demand is accounted for, the stochastic nature of demand is not accurately modeled.

When uncertain demand is taken into account, it is usually modeled as an exogenous random variable. In vehicle relocation problems, both \citet{nair2011fleet} and \citet{lu2018optimizing} assumed the demand for carsharing to be known only in distribution. \citet{nair2011fleet} used chance constraints to guarantee satisfaction of a given proportion of demand. \citet{lu2018optimizing} formulated a two-stage mixed-integer linear programming (MILP) model with finite demand scenarios generated from a joint distribution of one-way and round-trip demand rentals. \citet{lu2018optimizing} also conducted experiments on the case that one-way rental demand was endogenously determined by prices, where empirical demand distributions were generated from historical data with breaking rental price points. Nevertheless, the stochastic optimization model proposed by \citet{lu2018optimizing} does not include pricing decisions.
\citet{huang2021innovative} formulated a long-term fleet sizing and pricing problem as a two-stage stochastic program, where short-term vehicle relocations represent second-stage decisions. Demand was jointly determined by an exogenous random base-values (with given distribution) and a constant demand elasticity. \citet{pantuso2022exact} proposed a two-stage stochastic program where first-stage decisions are pricing and relocation decisions while second-stage decisions model rental decisions. Uncertainty of demand is accounted for by means of the randomness in the model of customer behavior (e.g., response to prices). A potential drawback of their approach is the necessity to individually include customers in the optimization model, i.e., demand is not aggregated.

In the account given above, it emerges that pricing decisions (i) are often made before accurately knowing customer demand and (ii) influence adoption decision and hence rental demand.
Stochastic optimization naturally became a reference decision-support methodology, see, e.g., \citet{lu2018optimizing, huang2021innovative, pantuso2022exact}. 
However, the existing literature typically models carsharing demand as an \textit{exogenous} random variable, that is, independent of the CSO's decisions. The dependency of random demand on prices remains, to a large extent, unexplored. Part of the explanation behind this lack of studies is the sparsity of general methodology for stochastic optimization with \textit{endogenous} (or \textit{decision-dependent}) uncertainty. While applications can be found, such as \citet{zhan2016generation} for power generation expansion, \citet{lejeune2018aeromedical} for medical evacuation, \citet{kopa2021decision} for asset–liability management, general-purpose methodology is somewhat sparse. To the best of our knowledge, we propose the first application in the domain of shared mobility.

In this paper, we address the problem of pricing mobility services accounting for \textit{endogenous} demand uncertainty.
The problem is studied in the context of a one-way station-based carsharing system where rental prices may depend on the origin and/or destination of the rental. We assume that prices have an impact on rental demand. In particular, we assume that the probability distribution of rental demand is fully specified by the pricing decisions made.
Hence, the CSO seeks to optimally set carsharing rental prices between different locations, and operate the necessary relocations, in order to maximize the conditional (on the price) expectation of rental profits in a near-future interval of time. Carsharing and, more generally, shared mobility services have developed into a variety of practical operating models. We do not aim to provide the most accurate model for a specific business case or for specific assumptions on customers behavior. In contrast, we provide a recipe that can be easily adapted to various specifications of the service. 

The contributions and main findings of this paper can be summarized as follows.
\begin{itemize}
    \item We propose a \textit{two-stage mixed-integer stochastic program with decision-dependent uncertainty} to model pricing and relocation decisions in a one-way carsharing service, where the random demand follows a price-dependent probability distribution. 
    \item \color{black}To address the exponential size of the model, we propose an exact solution method which specializes a variant of the L-shaped method for problems with decision-dependent uncertainty developed by \cite{pantuso2025shaped}. In particular, (i) we derive distribution-specific optimality cuts, (ii) we prove closed-form primal and dual solutions to the second-stage problems; this guarantees efficient separation routines, (iii) we propose effective problem-specific valid inequalities and, finally, (iv) we devise cut-sharing mechanisms among different versions of the second-stage problem.\color{black} 
    \item In extensive computational experiments, we demonstrate the scalability of the proposed method. In particular, the method could solve more than twice as many instances to optimality compared to a commercial solver.
    \item Finally, in a case study based on a carsharing system in Copenhagen, Denmark, we show and discuss the effects of accounting for decision-dependent uncertainty. \color{black}Taking into account decision-dependent demand uncertainty increased expected profits by between $7.04\%$ and $10.82\%$ compared to a benchmark with deterministic price-elastic demand, and by between $3.01\%$ and $13.54\%$ compared to a benchmark with decision-independent stochastic demand. \color{black}
\end{itemize}

In the remainder of this paper, we first give a detailed problem description and a general formulation of the carsharing pricing and relocation problem with endogenous demand uncertainty in \Cref{sec:problem_describe}.
In \Cref{sec: L-shaped method}, we present the solution algorithm. In \Cref{sec: computational study} we provide evidence on the effectiveness of the solution method based on the result of extensive numerical experiments. In \Cref{sec: case_study}, through a real-world case study based on a carsharing system operated in Copenhagen, Denmark, we discuss the practical applicability of our proposed model and the effects of accounting for decision-dependent uncertainty. Finally, we draw conclusions and make final remarks in \Cref{sec: conclusion}.

\section{Problem} \label{sec:problem_describe}

Given a target interval of time (e.g., $17:00-18:00$), the CSO needs to determine (i) the rental prices to apply during that interval of time and (ii) the relocations to perform before the beginning of (hence in preparation for) that interval of time.

Let $\set{G}:=(\set{I}, \set{A})$ denote the carsharing network with $\set{I}$ being the set of carsharing stations and $\set{A}:=\{(i, j)| i, j \in \set{I}, i \not= j\}$. 
We assume the set $\set{I}$ is partitioned into zones (e.g., geographical districts of the city). This is without loss of generality, since it is always possible to define one zone for each station. Let $\set{Z}$ denote the set of zones and $\set{I}^z \subset \set{I}$ the set of stations in zone $z$. For each pair of zones, the CSO may select a rental price from a discrete set of prices $\set{L}$. Hence, pricing decisions are modeled by binary variables $x: = (x_{z_1,z_2,l})_{z_1, z_2\in\set{Z}, l\in\set{L}}$, with $x_{z_1, z_2, l}$ taking value $1$ if price $l$ is applied for rentals that originate in $z_1$ and terminate in zone $z_2$, $0$ otherwise. Zone-based rental prices may represent e.g., pick-up or drop-off fees as employed in a number of real-world services. For instance, \textit{Free2move} charges a drop-off fee for trips ending in designated zones of the city, and \textit{Zity} in Milan imposes both drop-off and pick-up fees for trips ending and starting in different zones, see \citet{free2move} and \citet{zity} respectively.
The specification of the rental fee is problem specific and does not have an impact on the derivation of the model. We will provide an example in our experiments, see, \Cref{sec: instance_gen}. 

Let $\set{V}$ denote the set of shared vehicles. Relocation decisions are modeled by binary variables $s:= (s_{vi})_{v \in \set{V}, i \in \set{I}}$ where $s_{vi}$ takes value $1$ if vehicle $v$ is made available at station $i$, $0$ otherwise. Relocating vehicle $v$ from its current station to station $i$ generates a cost $C_{vi}$.

The demand within the target interval of time is uncertain at the time pricing and relocation decisions are made. In addition, the probability distribution of demand is influenced by pricing decisions, hence \textit{endogenous} to the problem. Given a measurable space $(\Omega,\mathcal{F})$, we model the demand for carsharing rides during the target interval of time as a random variable $\boldsymbol{\xi}: \Omega \rightarrow \Z^{|\set{A}|}_{\geq 0}$. Hence, element $\boldsymbol{\xi}_{ij}$ represents the random demand on arc $(i,j)\in\set{A}$.
In particular, we assume that there exist $K$ potential customers in the system (e.g., all the users registered to the service). Hence, we define $\Omega:=\{0,1\}^{K}$ and we set $\set{F}$ to be the power set of $\Omega$. This means that each random event $\omega$ is a binary vector of length $K$ that represents whether or not each customer requests to rent a vehicle in the target period. 
Given this setup, for $\omega\in \Omega$, the realization of the $(i,j)$-th random variable is defined as
$$\xi_{ij}(\omega) = \sum_{k|i(k)=i, j(k)=j}\omega_k \qquad \forall i,j\in \set{A}$$
where $i(k)\in\set{I}$ and $j(k)\in\set{I}$ represent the origin and destination stations of customer $k$, respectively. Then $\xi(\omega):=(\xi_{ij}(\omega))_{(i,j)\in\set{A}}$.
\Cref{fig: initial_prob_define} provides an example of the realization assuming only one origin-destination pair. 

\begin{figure}[htbp!]
\centering
\includegraphics[width=0.57\linewidth]{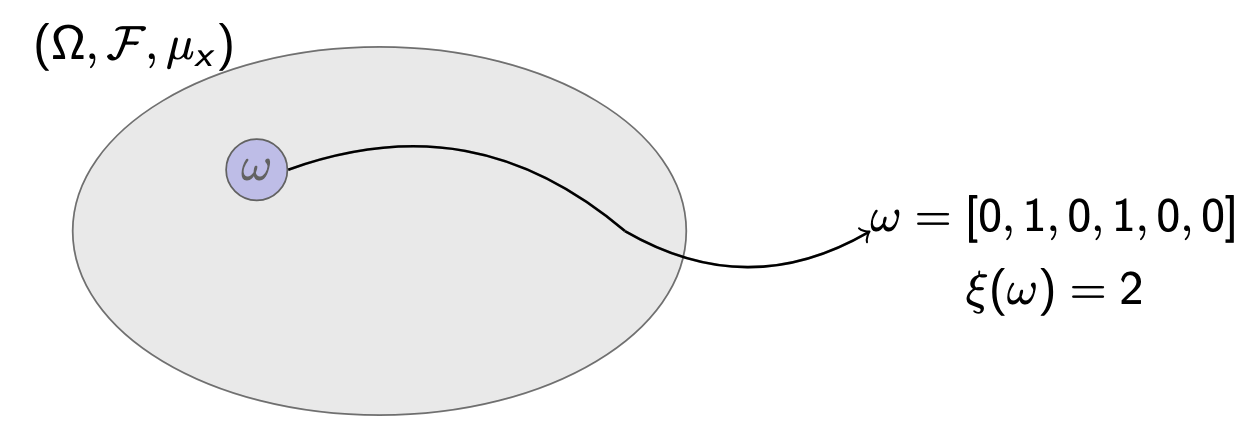}
\caption{Illustration of probability space $(\Omega, \set{F}, \mu_x)$.}
\label{fig: initial_prob_define}
\end{figure}

We let the probability measure on $(\Omega,\mathcal{F})$, denoted $\mu_x$, be fully determined by pricing decisions $x$. 
In particular, for each potential customer $k$, we assume the probability of using carsharing conditional on the price decision $x$ is known, denoted by $P(\omega_k=1|x)$. This probability can be obtained, for example, using discrete choice models such as the Logit model. Then, the probability measure $\mu_x$ can be defined for each $\omega\in\Omega$ as 
\begin{align}\label{eq:mu-omega}
\mu_x(\{\omega\})=\prod_{k\vert \omega_k=1}P(\omega_k=1\vert x)\prod_{k\vert \omega_k=0}(1-P(\omega_k=1\vert x))    
\end{align}
Finally, with these elements in place, we define the distribution of $\boldsymbol{\xi}$ on $\mathbb{Z}^{|\mathcal{A}|}_{\geq 0}$ conditional on $x$ as $P_x$. Given an integer vector $N\in\mathbb{Z}^{|\set{A}|}_{\geq 0}$, this is defined as 
\begin{align}
\label{form: prob}
P_x(\boldsymbol{\xi} = N) = \mu_x\left(\left\{\omega \in \Omega | \xi(\omega) = N \right\}\right)    
\end{align}
\color{black}Also in this case, the specific way in which $P(\omega_k=1\vert x)$ for each $k$ is obtained depends on the particular case and on the assumptions made regarding customer behavior. This does not have an impact on the general model proposed. A particular specification of $\mu_x(\{\omega\})$ is described in \Cref{sec: instance_gen} and \aref{app:utility function}.
\color{black}

We model the decision problem as a stochastic program. Here, pricing and relocation decisions are made prior to knowing the realization of demand and in preparation for that. Once decisions have been made, rental demand in the target time interval arrives at random according to the conditional distribution $P_x$. Then, rental profit $Q(x, s, \xi)$ is obtained as a consequence of pricing and relocation decisions and the demand materialized (i.e., $\xi$). 
In \Cref{sec:2nd-stage problem} we model $Q(x, s, \xi)$ as an optimization problem, hence obtaining a two-stage stochastic program. 
The carsharing pricing and relocation problem can be modeled as the following two-stage stochastic program with decision-dependent uncertainty.
\begin{subequations}
\label{form: general_form}
    \begin{align}
    \max &-\sum_{v \in \set{V}} \sum_{i \in \set{I}} C_{vi} s_{vi} +\mathbb{E}_{P_x} \big[Q(x, s, \xi)\big] \\
   s.t.~ & \sum_{l \in \mathcal{L}}x_{z_1, z_2, l} =1 & \forall z_1, z_2 \in \set{Z}, \forall l \in \set{L} \label{constr: price_set}\\
    & \sum_{i \in \set{I}} s_{vi} = 1& \forall v \in \set{V} \label{constr: veh_relocation}\\
    & x_{z_1,z_2, l} \in \{0,1\} & \forall z_1, z_2 \in \set{Z}, \forall l \in \set{L} \label{domain:var_x}\\
    & s_{vi} \in \{0,1\} & \forall v \in \set{V}, \forall i \in \set{I} \label{domain:var_s}
    \end{align} 
\end{subequations}

In problem \eqref{form: general_form}, the objective function consists of a negative term corresponding to relocation costs and the conditional expectation of second-stage profits. 
Constraints \eqref{constr: price_set} state that exactly one price level must be assigned to each pair of zones. Constraints \eqref{constr: veh_relocation} state that each shared vehicle should be made available at exactly one carsharing station $i \in \set{I}$. Constraints \eqref{domain:var_x} and \eqref{domain:var_s} define the domain of the decisions variables. Problem \eqref{form: general_form} allows for various definitions of the second-stage profit $Q(x, s, \xi)$ depending on the specifics \color{black} and operating strategies \color{black} of the carsharing system. We provide \color{black} one such definition -- and remark on possible extensions of it -- \color{black} in \Cref{sec:2nd-stage problem}, assuming a demand-pooling strategy.

\subsection{Expected revenue}\label{sec:2nd-stage problem}
To model expected revenue conditional on pricing decisions we assume that the CSO adopts a ``\textit{demand pooling}" strategy, see, e.g., \citet{nair2011fleet,zhao2018integrated, li2022data}. This means that, to rent a vehicle during a given interval of time, users must send requests before a given deadline. After the deadline, the CSO matches supply and demand, hereby fulfilling or rejecting rental requests.
\color{black}
This entails that, periodically, the operator is faced with the following problem: \textit{Given a set of rental requests, decide which ones to accept}. 
There are obviously many policies an operator may adopt. The most straightforward is to let profits drive the decisions. For simplicity, we present this policy in \Cref{sec:recourse2} and show that it can be implemented by means of a MILP problem.

However, an operator might wish to adopt different policies. For example, a \textit{fixed assignment} of vehicles to requests based on, e.g., characteristics of the request such as the destination station or the type of customer. This might provide the operator additional instruments to control, e.g., the levels of service, the overall accessibility of the service, the future distribution of the fleet and which zones get access to the service. 
For example, the operator may prioritize the allocation of vehicles to customers traveling to specific destinations -- that are not necessarily the most profitable -- in order to ensure more homogeneous access to the service across the operating area. 

We stress that, while the implemented policy presented in \Cref{sec:recourse2} is, perhaps, simplified in some real-life features (e.g., they assume each vehicle is rented at most once during the target time interval), \color{black} it results in a reasonable proxy of expected profits. 
To illustrate the performance of the method on more involved assignment policies, in \aref{sec: recourse problem wa} we provide a version of $Q(x,s,\xi)$ where the assignment of vehicles to customers follows a fixed policy based on the customers destinations. In particular, we assume, as in \citet{soppert2022differentiated}, that the operator bounds the number of vehicles directed to each destination by the share of demand to that destination. 

\subsubsection{A profit-driven vehicle allocation policy}\label{sec:recourse2}
\color{black}
Given a first-stage solution $(x, s)$ and a demand realization $\xi$ of $\boldsymbol{\xi}$, we formulate the recourse problem of allocating supply to demand with the goal of maximizing profits as a MILP. We show that $Q(x, s, \xi)$ becomes a special transportation problem with independent sources and mutually-exclusive sinks. 

\color{black}
Let $P_{ijl}$ denote the rental price for picking up a vehicle at station $i$ and returning it at station $j$ when price level $\set{L}$ is applied. 
Define a mapping $\zeta: \set{I} \rightarrow \set{Z}$ such that $\zeta(i)$ identifies the zone to which station $i$ belongs. Given a feasible first-stage solution $x$, the price for renting a vehicle at station $i$ and returning it at station $j$ is  $\sum_{l\in\set{L}}P_{ijl} x_{\zeta(i),\zeta(j), l}$.
Note that $\sum_{l\in\set{L}}x_{\zeta(i),\zeta(j), l}=1$ since $x$ is feasible. We define $\set{J}_i:=\{j\in\set{I}:(i,j)\in\set{A}\}$ as the set of destination stations connected to a source station $i \in \set{I}$. We make $|\set{L}|$ copies of each destination station $j \in \set{J}_i$. These represent the sinks connected to source station $i$. We then define variables $r_{ijl}$ to represent the flow of vehicles from source $i$ to sink $(j,l)$, that is, the number of vehicles serving demand on arc $(i,j)$ at price level $l$, see \Cref{fig:rpwopa}. 
\begin{figure}[htbp!]
    \centering
    \includegraphics[width=0.48\linewidth]{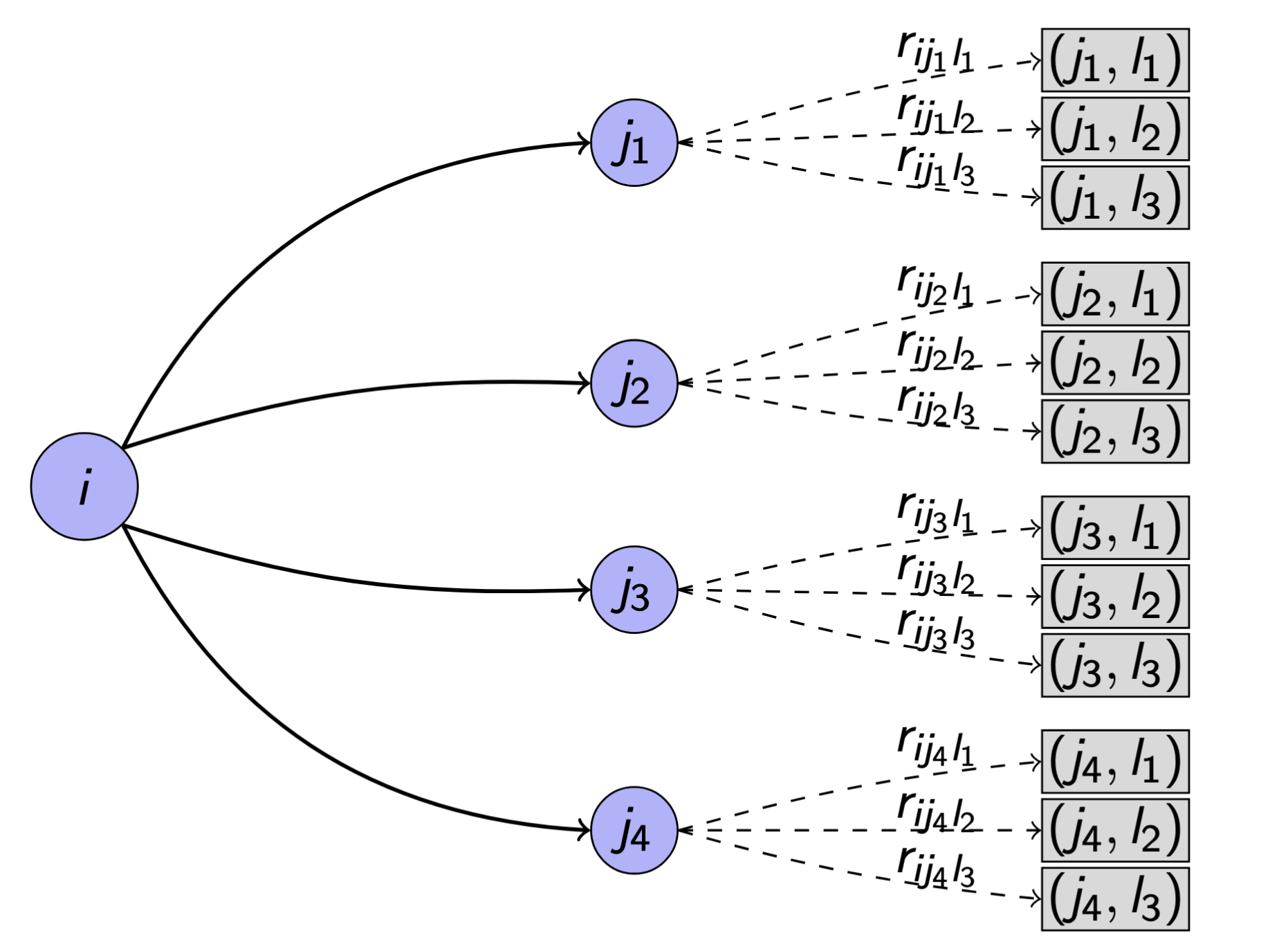}
    \caption{Sinks $(j,l)$ for a given source station $i$.}
    \label{fig:rpwopa}
\end{figure}

With this notation in place we can formulate the recourse problem as
\begin{subequations}
\begin{align}
Q(x, s, \xi) = \max & \sum_{(i,j)\in\set{A}}\sum_{l\in\set{L}} P_{ijl} r_{ijl}   & \label{bilinear_obj_2}\\
s.t. ~& r_{ijl} \leq \xi_{ij} x_{\zeta(i), \zeta(j), l}, &\forall (i,j)\in\set{A}, \forall l \in \set{L} \label{constr_tp1}\\
&\sum_{(j, l) \in \set{J}_i \times \set{L}} r_{ijl} \leq \sum_{v \in \set{V}}s_{vi}, & \forall i \in \set{I} \label{constr_tp2}\\
&r_{ijl} \in \mathbb{Z}_{\geq 0}, &\forall (i, j)\in \set{A}, \forall l \in \set{L}
\end{align}
\label{form: rpwopa}
\end{subequations}
Constraints \eqref{constr_tp1} state that no vehicles will be sent to sink $(j, l)$ if $x_{\zeta(i), \zeta(j), l}= 0$, and we can send up to the demand of customer $j$ (i.e., $\xi_{ij}$), otherwise. Observe that, if $x$ is feasible for \eqref{form: general_form}, only one of the $(j,l)$ sinks will have positive demand for each $j$. Hence, for each $(i,j)$ only one of these $r_{ijl}$ variables can take a positive value. Constraint \eqref{constr_tp2} ensures that the total fulfilled demand does not exceed the supply at station $i$. 

\color{black}

In \aref{sec: recourse problem wa} we extend problem \eqref{form: rpwopa} to include a proportional assignment of vehicles depending on the destination of the requests. 
In what follows, problem \eqref{form: rpwopa} will be referred to as the \textit{recourse problem without proportional vehicle assignment} (RPWoPA) while the problem in \aref{sec: recourse problem wa} will be referred to as the \textit{recourse problem with proportional vehicle assignment} (RPWPA).
Later, in \Cref{sec: VIs}, we show that the profit delivered by problem RPWoPA provides an upper bound to that of problem RPWPA. In \Cref{sec:system_perform} we compare the two allocation policies using different performance metrics, based on a case study.

\color{black}

\section{Exact solution method} \label{sec: L-shaped method}
We propose an exact decomposition method that accounts for decision-dependent demand uncertainty. 
\color{black}
The method instantiates and extends the L-shaped method for stochastic programs with decision-dependent uncertainty (henceforth, \LS) developed by \cite{pantuso2025shaped}. The \LSt method is applicable to problems where the space of feasible solutions is partitioned into a finite collection of subsets. To each subset there corresponds a specific probability distribution of the random parameters. The method consists of adding distribution-specific optimality and feasibility cuts. The specificity of the cuts is as follows: given a probability distribution, the corresponding optimality and feasibility cuts are effective only in the subregion which enforces that distribution, and redundant otherwise. 

We adopt a number of problem-specific instantiations and extensions of the method. 
\begin{itemize}
    \item In \cite{pantuso2025shaped} the authors test the method on a problem where the feasible region is partitioned into \textit{polyhedra}. We show how the method can be implemented on a problem where the feasible region has a \textit{combinatorial} structure and is partitioned into a collection of finite-dimensional subsets. 
    \item We develop problem-specific optimality cuts. We show that these cuts can be separated efficiently by exploiting the specific structure of both the RPWoPA and RPWPA problems. In particular, we develop closed-form primal and dual solutions, which enable efficient separation routines. 
    \item We develop effective problem-specific valid inequalities that provide non-trivial upper bounds for the over-estimator of the conditional expected revenue. These are crucial to an efficient implementation of the method.
    \item We provide cut-sharing mechanisms. These allow us to reuse the cuts developed for the problem with RPWoPA when solving the problem with RPWPA. These cuts significantly improve the performance of the method when solving the problem with RPWPA.
\end{itemize}
\color{black}

We start by observing that the feasible region of problem \eqref{form: general_form} can be seen as the union of disjoint subregions. All solutions in a given subregion determine the same distribution $P_x$. Let $\set{C}$ denote the set of feasible $(x, s)$ solution, that is 
$$\set{C} = \left\{(x, s) \in \{0,1\}^{|\set{Z}| \times|\set{Z}|\times |\set{L}|} \times \{0,1\}^{|\set{V}| \times |\set{I}|}: \eqref{constr: price_set}-\eqref{constr: veh_relocation} \right\}$$
Let then $\set{X} = \{x \in \{0,1\}^{|\set{Z}| \times|\set{Z}|\times |\set{L}|}: \eqref{constr: price_set}\}$ identify the set of all feasible pricing solutions. 
Observe that $\set{X}$ is finite and countable. Let $\set{D} := \{1, 2, \cdots, |\set{X}|\}$ index the solution in $\set{X}$, that is $x_d$, $d=1,\ldots, |\set{X}|$. For each $d\in\set{D}$, we obtain a subset $\set{C}_d =\left \{(x_d, s) \in \set{C}\right \}\subset \set{C}$.
In words, the subset $\set{C}_d$ contains all $(x, s)$ solutions having component $x=x_d$. Observe that 
\begin{itemize}
    \item $\set{C}_{d_1}\cap \set{C}_{d_2} = \emptyset, \forall d_1,d_2 \in \set{D}, d_1\neq d_2$.
    \item $\cup_{d\in\set{D}} \set{C}_d = \set{C}$.
\end{itemize}
Observe further that the distribution of demand $\boldsymbol{\xi}$ depends only on $x$, see \Cref{sec:problem_describe}. This entails that there are $\lvert \set{D}\rvert$  distributions, one for each $x\in\set{X}$. Let such distributions be denoted as $P_d$.
Denote also $\boldone_d: \set{C} \rightarrow \{0,1\}$ the characteristic function of $\set{C}_d$, that is $\boldone_d(x,s)=1$ if $(x,s)\in \set{C}_d$, $0$ otherwise.
Then we can reformulate problem \eqref{form: general_form} as  
\begin{align}
\label{form: general_reform}
   \max_{(x, s)\in \set{C}} \bigg\{-\sum_{v\in\set{V}}\sum_{i\in\set{I}}C_{vi}s_{vi}+\sum_{d \in \set{D}} \boldone_d(x, s) \mathbb{E}_{P_d} \big[Q(x,s, \xi)\big]\bigg\}
\end{align}
Let $Q(x, s) = \sum_{d\in\set{D}} \boldone_d(x, s) \mathbb{E}_{P_d} \big[Q(x,s, \xi)\big]$. 
Observe that each $P_d$, for $d \in \set{D}$, is a distribution on $\mathbb{Z}^{|\set{A}|}_{\geq 0}$ and is thus naturally discrete. Therefore, we can let $\{\xi^{nd}\}_{n=1}^{N_d}$ be the support of $\boldsymbol{\xi}$ under distribution $P_d$ and let $\pi^{nd}$ the probability of realization $n$ under $P_d$. Then 
\begin{equation}\label{eq:condexp}Q(x, s) = \sum_{d\in\set{D}} \boldone_d(x, s)  \sum_{n=1}^{N_d}  \pi^{nd} Q(x, s, \xi^{nd})\end{equation}
In practical applications, the support of $P_d$, $d\in \set{D}$ may still be very large. In that case, as in our experiments in \Cref{sec: computational study}, one can use approximations which rely on a smaller set of realizations.

Let $\phi$ be a non-negative variable. We can then reformulate problem \eqref{form: general_reform} as follows. 
\begin{align}\tag{MP}
\label{form: mp}
    \max_{(x, s) \in \set{C}, \phi \geq 0}  \left\{-\sum_{v\in\set{V}}\sum_{i\in\set{I}}C_{vi}s_{vi}+\phi |s.t. ~ \phi \leq Q(x, s)\right\}
\end{align}

The method relaxes constraints $\phi \leq Q(x, s)$ and re-builds them iteratively. 
That is, at each iteration $k=0,1,\ldots$ the method solves problem \eqref{RMP}
\begin{align}
\label{RMP}\tag{RMP}
     \max_{(x, s)\in\set{C}, \phi\geq 0} \big\{-\sum_{v\in\set{V}}\sum_{i\in\set{I}}C_{vi}s_{vi} + \phi|s.t. ~ \phi \leq o(x,s), o\in\set{O}^k\big\}
\end{align}
where $\set{O}^k\subseteq \set{O}$ and $\set{O}$ is a set of distribution-specific optimality cuts $o:\set{C}\to \R$.
The solution procedure is described in \Cref{pseudocode: L-shaped method}. 
The main difference with the classical L-Shaped method is that, prior to generating cuts, the algorithm identifies the probability distribution enforced by $x^k$ (see, \cref{line: d_indentify}). The value of $Q(x^k,s^k)$ can then be computed as 
$$Q(x^k, s^k) =   \sum_{n=1}^{N_{d_k}}  \pi^{nd_k} Q(x, s, \xi^{nd_k})$$
since only $\boldone_{d_k}(x^k, s^k)$ takes value one in \eqref{eq:condexp}.
Hence, the algorithm generates distribution-specific cuts that are effective only under the distribution they were generated for. If no such cut is identified, the algorithm terminates. 

\color{black}
The form of the optimality cuts depends on the characteristics of $Q(x,s,\xi)$ and, therefore, on whether we use the RPWoPA or the RPWPA. In \Cref{sec:ds_cuts_rpwopa} we derive optimality cuts for the RPWoPA and show how to separate them efficiently. In particular, we show that, even if the problem is a MILP, we can generate exact optimality cuts using linear programming duality. In \Cref{sec:ds_cuts_integer} we derive optimality cuts for the RPWPA. These cuts can be adopted every time $Q(x,s,\xi)$ is an integer programming problem. For both the RPWoPA and RPWPA  there exists a finite set $\set{O}$ of optimality cuts. These finitely many optimality cuts ensure that the algorithm converges finitely. Finally, in \Cref{sec: VIs} we introduce a number of valid inequalities which significantly aid convergence. In particular, we show that the cuts developed for the RPWoPA can be used as valid inequalities when solving the problem with RPWPA. \color{black}

\begin{algorithm}
\footnotesize
	\caption{Pseudocode of the algorithm for solving problem \eqref{form: general_reform}.}
 \begin{algorithmic}[1]
     \State $\texttt{solved} \gets \texttt{false}$
     \State $k\gets 0$
     \While{not $\texttt{solved}$} \label{alg_line:converge}
     \State Solve \eqref{RMP}, and let $(x^k, s^k, \phi^k)$ be the optimal solution
     \State Identify $d_k \in \set{D}$ such that $(x^k, s^k) \in \set{C}_{d_k}$ \label{line: d_indentify}
     \State Compute $Q(x^k, s^k)$
     \If {$\phi^k > Q(x^k, s^k)$} \Comment{Optimality condition is violated.}
     \State Identify a violated optimality cut $o$ from $\set{O}$
     \State $\set{O}^{k+1}=\set{O}^k\cup \{o\}$
     \State $k\gets k+1$
     \Else 
     \State $\texttt{solved} \gets \texttt{true}$    \Comment{The algorithm terminates.} \EndIf
     \EndWhile
 \end{algorithmic} \label{pseudocode: L-shaped method}
\end{algorithm}

Observe that, since \eqref{RMP} is a mixed-integer linear program, \Cref{pseudocode: L-shaped method} can be embedded into a Branch-and-Bound (B\&B) procedure as described in \cite{laporte1993integer}. In practical implementations (as in \Cref{sec: computational study}) the routines in \Cref{pseudocode: L-shaped method} are only invoked once the B\&B method reaches an integer node. This entails that the B\&B tree is created only once, upon starting the algorithm. This is, in general, preferable to building a new tree every time \eqref{RMP} is solved. 

\color{black}
\subsection{Distribution-specific duality-based optimality cuts} \label{sec:ds_cuts_rpwopa}

In this section we derive optimality cuts for the RPWoPA, i.e., \eqref{form: rpwopa}. The recipe used for the cuts is immediately applicable any time $Q(x,s,\xi)$ forms a linear programming problem.
\color{black}

We start by observing that problem \eqref{form: rpwopa} can be decomposed in one smaller problem for each $i\in\set{I}$. That is, 
$$Q(x,s,\xi)=\sum_{i\in\set{I}}Q^i(x,s,\xi)$$ where
\begin{subequations}\label{form: rpwopa_i}
\begin{align}
Q^i(x, s, \xi) = \max & \sum_{j\in\set{J}_i}\sum_{l\in\set{L}} P_{ijl} r_{jl}   & \\
s.t. ~ & r_{jl} \leq \xi_{ij} x_{\zeta(i), \zeta(j), l}, &\forall j\in\set{J}_i, \forall l \in \set{L} \\
&\sum_{(j, l) \in \set{J}_i \times \set{L}} r_{jl} \leq \sum_{v \in \set{V}}s_{vi}, & \\
&r_{jl} \in \mathbb{Z}_{\geq 0}, &\forall  j\in \set{J}_i, \forall l \in \set{L}
\end{align}
\end{subequations}
Now, each $Q^i(x,s,\xi)$ is an integer linear program. However, we can now show that its optimal solution can be found by solving its linear programming relaxation.
\begin{proposition}\label{prop:rpwopa_tu}
    Let $(x,s)$ be feasible for \eqref{RMP} and $\xi\in \Z_{\geq 0}^{\vert\set{A}\vert}$.   
    For given $i\in\set{I}$ let $Q^i_{LP}(x,s,\xi)$ be the linear programming relaxation of $Q^i(x,s,\xi)$.
    Let $(\hat{r}_{jl})_{j\in \set{J}_i,l\in\set{L}}$ be the optimal solution to $Q^i_{LP}(x,s,\xi)$ and $\set{R}$ be the set of optimal integer solutions to $Q^i(x,s,\xi)$. 
    Then  $(\hat{r}_{jl})_{j\in \set{J}_i,l\in\set{L}}\in\set{R}$ and $Q^i_{LP}(x,s,\xi)=Q^i(x,s,\xi)$.
\end{proposition}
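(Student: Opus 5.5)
Fix $i\in\set{I}$ and write $S_i:=\sum_{v\in\set{V}}s_{vi}\in\Z_{\geq 0}$ and $u_{jl}:=\xi_{ij}x_{\zeta(i),\zeta(j),l}\in\Z_{\geq 0}$. These are integers because $s$ and $x$ are binary and $\xi$ is integral. Then $Q^i_{LP}(x,s,\xi)$ is the continuous problem of maximizing $\sum_{j,l}P_{ijl}r_{jl}$ subject to $0\le r_{jl}\le u_{jl}$ and $\sum_{j,l}r_{jl}\le S_i$. This is a continuous knapsack problem with unit weights, one capacity constraint and integral upper bounds. I would prove that such a problem has an integral optimal solution given by a greedy rule, and that every vertex of its feasible polytope is integral.

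The first route is total unimodularity. The constraint matrix consists of the identity rows for the bounds $r_{jl}\le u_{jl}$ and a single all-ones row for the capacity. A $0/1$ matrix in which each column has its ones in consecutive rows is an interval matrix, and so is TU. Here each column of the combined matrix has exactly two ones, one in its identity row and one in the all-ones row. Placing the all-ones row last, every column's ones form a consecutive block only if its identity row is adjacent to that row, which fails in general.

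I would therefore use the cleaner argument: the matrix $\begin{bmatrix} I \\ \mathbf{1}^\top\end{bmatrix}$ is TU by the Ghouila-Houri criterion. For any subset of rows, assign sign $+1$ to the all-ones row (if selected) and $-1$ to every selected identity row. Each column sum then lies in $\{-1,0,1\}$. Appending $-I$ for the nonnegativity constraints preserves TU. Since the right-hand side $(u,S_i,0)$ is integral, every vertex of the feasible polytope is integral. The polytope is nonempty (it contains $r=0$) and bounded, so an optimal vertex exists.

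The statement refers to ``the'' optimal solution $\hat r$, so I would also make the optimal solution explicit. The greedy rule sorts the pairs $(j,l)$ with $u_{jl}>0$ by decreasing $P_{ijl}$ and fills each $r_{jl}=\min\{u_{jl},\,S_i-\text{(capacity already used)}\}$. This produces integer values. Its LP optimality follows from the standard exchange argument for the fractional knapsack, or by exhibiting a dual solution. Set $\alpha$ equal to the price of the last pair receiving positive flow if the capacity binds, and $\alpha=0$ otherwise, and set $\beta_{jl}=(P_{ijl}-\alpha)^+$. This dual satisfies complementary slackness, which also prepares the closed-form duals needed later.

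To conclude, let $\hat r$ be an optimal solution of $Q^i_{LP}$ that is integral, either a vertex or the greedy solution. Since $Q^i_{LP}$ is a relaxation, $Q^i\le Q^i_{LP}$. Since $\hat r$ is feasible for $Q^i$, $Q^i\ge \sum P_{ijl}\hat r_{jl}=Q^i_{LP}$. Hence equality holds and $\hat r\in\set{R}$.

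The main obstacle is interpretive. If the LP optimum is not unique, an arbitrary non-vertex optimal solution could be fractional. I would therefore state explicitly that $\hat r$ denotes the optimal basic solution returned by the simplex method, or the greedy solution, and prove the claim for that solution.
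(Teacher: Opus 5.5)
Your proof is correct and follows the paper's argument: the right-hand side is integral, and the identity matrix with an all-ones row appended is totally unimodular. Your Ghouila-Houri signing ($-1$ on the identity rows, $+1$ on the all-ones row) is the same row partition $(M_1,M_2)$ that the paper uses with Wolsey's two-nonzeros-per-column criterion. Your insistence on reading $\hat r$ as a basic (vertex) optimal solution is a genuine precision the paper leaves implicit, since tied prices can produce fractional LP optima. The greedy/dual paragraph is not needed here, and it would in any case require $P_{ijl}\geq 0$ and separate treatment of $\sum_{v\in\set{V}}s_{vi}=0$.
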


\begin{proof}{Proof of \Cref{prop:rpwopa_tu}}
    The proof follows immediately by observing that
    \begin{enumerate}
        \item Since $(x,s)$ is feasible for \eqref{RMP} and $\xi\in \Z_{\geq 0}^{\vert\set{A}\vert}$, the right-hand side of problem $Q^i(x,s,\xi)$ is an integer vector.
        \item The matrix of coefficients of $Q^i(x,s,\xi)$ is \textit{totally unimodular}.
    \end{enumerate}
    Under these conditions the solution to $Q^i_{LP}(x,s,\xi)$ solves $Q^i(x,s,\xi)$, see \cite[Chapter 3]{wolsey2020integer}. 
    It remains to show that the matrix of coefficients is totally unimodular. \color{black}For that purpose, observe that the matrix of coefficients consists of a row of ones appended below an identity matrix. That is,   \color{black}
    \begin{equation*}   
    \begin{pmatrix} 
        1 & 0 & 0 & 0 &  \cdots & 0 \\
        0 & 1 & 0 & 0 & \cdots & 0 \\
        \vdots & \vdots & \vdots & \vdots & \ddots & \vdots \\
        0 & 0 & 0 & 0 & \cdots & 1 \\
        1 & 1 & 1 & 1 & \cdots & 1\\
        \end{pmatrix}
    \end{equation*}
    Let $a_{ij}$ be its elements. It is easy to verify that
    \begin{enumerate}[(i.)]
    \item Each element in the matrix takes the value in set $\{+1, -1, 0\}$. 
    \item Each column contains at most two nonzero coefficients.
    \item There exists a partition $(M_1, M_2)$ of the set of rows such that each column $j$ containing two nonzero coefficients satisfies $\sum_{m\in M_1} a_{m j} - \sum_{m \in M_2} a_{m, j} =0$. Such partition can easily be obtained by taking the first $\vert \set{J}_i \vert \vert \set{L} \vert$ rows as set $M_1$ and the last row as set $M_2$.
\end{enumerate}
These represent sufficient conditions for the matrix to be totally unimodular, see \cite[Proposition 3.2]{wolsey2020integer}.
\Halmos
\label{proof:TU_matrix} 
\end{proof}

By virtue of \Cref{prop:rpwopa_tu} we can derive duality-based optimality cuts. In particular, \cite{pantuso2025shaped} show that inequalities
\begin{align}
   \label{form: LP_cuts_tp_dual} 
   \phi \leq \sum_{i \in \set{I}} \sum_{n=1}^{N_{d}} \pi^{nd} \left ( \sum_{(j,l)\in \set{J}_i \times \set{L}} \alpha_{ijl}^{nd} \xi^{nd}_{ij} x_{\zeta(i), \zeta(j), l} + \beta_{i}^{nd} \sum_{v \in \set{V}} s_{vi} \right) + U(2|\set{Z}| - \sum_{z_1 \in \set{Z}} \sum_{z_2 \in \set{Z}} x_{z_1, z_2, l(z_1,z_2,d)}) 
\end{align}
generated for all $d\in\set{D}$ and for all extreme points $(\alpha_{ijl}^{nd})_{(i,j)\in\set{A},l\in\set{L}}$ and $(\beta_i^{nd})_{i\in\set{I}}$ in the feasible region of the dual problem to \eqref{form: rpwopa} form a finite set of optimality cuts that ensure $\phi \leq Q(x,s)$ for all $(x,s)\in\set{C}$.

Observe how the last term in \eqref{form: LP_cuts_tp_dual} makes the cut distribution-specific. In particular, it enforces the cut only for solutions $(x,s)\in\set{C}_d$. For these solutions, the last term of the cut vanishes and the inequality becomes a supporting hyperplane to the epigraph of $Q(x,s)=\sum_{n=1}^{N_{d}}  \pi^{nd} Q(x, s, \xi^{nd})$. For distributions other than $d$, the cut yields a valid upper bound which makes the cut redundant. 

The computational effort to generate a cut \eqref{form: LP_cuts_tp_dual} from a given solution $(x^k,s^k)$ amounts to the cost of solving one linear program $Q^i(x^k,s^k,\xi^{nd_k})$ for each $i\in\set{I}$ and $n=1,\ldots,N_{d_k}$. 
We can now show that primal and dual solutions to $Q^i(x,s,\xi)$ can be obtained in closed form. 

\begin{proposition}
    \label{prop: veh_lack 2}
    Let $(x, s)$ be a feasible solution to \eqref{RMP}.
    For a given station $\hat{i} \in \set{I}$ if the vehicle supply is larger than the total demand, that is
    $$\sum_{v \in \set{V}}s_{v \hat{i}} \geq \sum_{(j,l)\in\set{J}_{\hat{i}}\times\set{L}}\xi_{\hat{i}, j}x_{\zeta(i), \zeta(j), l}$$
    then the optimal solution is
    $$\hat{r}_{jl} = \xi_{\hat{i}, j}x_{\zeta(\hat{i}), \zeta(j), l}, ~~\forall (j, l) \in \set{J}_{\hat{i}}\times\set{L}$$
    Otherwise, assume
    $$\sum_{v\in\set{V}}s_{v\hat{i}} < \sum_{(j,l)\in\set{J}_{\hat{i}}\times\set{L}}\xi_{\hat{i}, j}x_{\zeta(i), \zeta(j), l}$$
    Let $\set{K}=\{(j, l) \in \set{J}_{\hat{i}} \times \set{L}|x_{\zeta(\hat{i}), \zeta(j), l} = 1\}$. Observe that $|\set{K}| = |\set{J}_{\hat{i}}|$ as $x$ is a feasible pricing decision.
    Sort the values $P_{jl}$ for $(j, l)$ indices in set $\set{K}$ such that, for  $k=2,\ldots, \vert\set{J}_{\hat{i}}\vert-1$ 
    $$P_{(j,l)^{(k-1)}} \geq P_{(j,l)^{(k)}} \geq P_{(j,l)^{(k+1)}}$$
    For a given $(j, l)^{(k)}$, denote the corresponding $j$ and $l$ as $j^{(k)}$, $l^{(k)}$, i.e., $(j,l)^{(k)}=(j^{(k)},l^{(k)})$. 
    Identify the index $2\leq\hat{k} \leq |\set{J}_{\hat{i}}|$ such that 
    $$\sum_{k=1}^{\hat{k}-1} \xi_{\hat{i}, j^{(k)}} \leq \sum_{v\in\set{V}}s_{v\hat{i}}, ~\text{and}~\sum_{k=1}^{\hat{k}} \xi_{\hat{i}, j^{(k)}}> \sum_{v\in\set{V}}s_{v\hat{i}}$$ 
 
    Then, the optimal solution is 
    \begin{equation}
    \hat{r}_{jl} = 
    \begin{cases}
    \xi_{\hat{i},j} & \quad \forall (j,l) \in \{(j,l)^{(1)}, (j,l)^{(2)}, \cdots, (j,l)^{(\hat{k}-1)} \}, \\[1ex]
    \sum_{v\in\set{V}}s_{v\hat{i}}-\sum_{k=1}^{\hat{k}-1}\xi_{\hat{i},j^{(k)}} & \quad (j, l) = (j,l)^{(\hat{k})}, \\[1ex]
    0 & \quad \forall (j,l) \in \set{J}_{\hat{i}} \times \set{L} \setminus \{(j,l)^{(1)}, (j,l)^{(2)}, \cdots, (j,l)^{(\hat{k})} \}.
\end{cases}
\end{equation}
\end{proposition}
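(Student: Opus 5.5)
By \Cref{prop:rpwopa_tu}, it suffices to work with the linear programming relaxation $Q^{\hat i}_{LP}(x,s,\xi)$, because its optimal value equals that of $Q^{\hat i}(x,s,\xi)$. The argument uses LP duality: we exhibit a primal feasible point and a dual feasible point with equal objective values. Write $S:=\sum_{v}s_{v\hat i}$ and $u_{jl}:=\xi_{\hat i j}x_{\zeta(\hat i),\zeta(j),l}$. Since $x$ is feasible, $u_{jl}=0$ whenever $(j,l)\notin\set{K}$, so those variables are forced to zero. The problem then reduces to a continuous knapsack over the index set $\set{K}$:
\[
\max\Big\{\sum_{(j,l)\in\set{K}}P_{\hat i jl}r_{jl}\;:\;0\le r_{jl}\le \xi_{\hat i j},\ \textstyle\sum r_{jl}\le S\Big\}.
\]
This problem has unit weights, and $P_{\hat i jl}\ge 0$ since these are prices.

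The dual of the relaxation is
\[
\min\Big\{\sum_{j,l}u_{jl}\alpha_{jl}+S\beta \;:\; \alpha_{jl}+\beta\ge P_{\hat i jl},\ \alpha,\beta\ge 0\Big\}.
\]
\textbf{Case 1} ($S\ge\sum u_{jl}$). The proposed $\hat r=u$ satisfies every constraint. It is optimal because it attains the trivial upper bound $\sum P u$. Equivalently, the dual solution $\beta=0$, $\alpha_{jl}=P_{\hat i jl}$ is feasible and gives the same value.

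\textbf{Case 2} ($S<\sum u_{jl}$). The index $\hat k$ exists because the partial sums of the $\xi_{\hat i j^{(k)}}$ over the sorted order increase from $0$ to a value exceeding $S$. The proposed $\hat r$ fills the highest-priced arcs up to their demand and puts the residual $S-\sum_{k<\hat k}\xi_{\hat i j^{(k)}}$ on $(j,l)^{(\hat k)}$. By the choice of $\hat k$, this residual lies in $[0,\xi_{\hat i j^{(\hat k)}})$, so $\hat r$ is feasible, and the capacity constraint holds with equality.

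For the dual certificate, set
\[
\beta=P_{(j,l)^{(\hat k)}},\qquad \alpha_{(j,l)^{(k)}}=P_{(j,l)^{(k)}}-\beta \text{ for } k<\hat k,\qquad \alpha=0 \text{ otherwise}.
\]
Feasibility follows from the sorting: $P_{(j,l)^{(k)}}\le\beta$ for $k\ge\hat k$, and $\alpha\ge 0$ for $k<\hat k$. For indices outside $\set{K}$ we need $\alpha_{jl}+\beta\ge P_{\hat i jl}$, which we can satisfy by taking $\alpha_{jl}=\max(0,P_{\hat i jl}-\beta)$. Their coefficient $u_{jl}$ in the dual objective is zero, so this choice does not change the dual value.

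Complementary slackness then holds by construction:
\begin{itemize}
\item $\alpha>0$ only where $\hat r_{jl}=u_{jl}$;
\item $\beta>0$ only when the capacity constraint is tight, which it is here;
\item $\hat r_{jl}>0$ only where $\alpha_{jl}+\beta=P_{jl}$.
\end{itemize}
Alternatively, one can compute directly that the dual objective equals $\sum_{k<\hat k}P_{(k)}\xi_{(k)}+\beta\bigl(S-\sum_{k<\hat k}\xi_{(k)}\bigr)$, which is the primal value. An exchange argument would also work: moving a unit of flow from a lower-priced arc to a higher-priced arc with spare demand never decreases the objective.

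The main obstacle is only bookkeeping in Case 2. We must handle ties in the sorting, the boundary where $\hat k$ could be $1$ (the statement says $\hat k\ge2$, so we also need to treat $S<\xi_{\hat i j^{(1)}}$), and keep in mind that the dual values produced here are what the cuts \eqref{form: LP_cuts_tp_dual} later use.
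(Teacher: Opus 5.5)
Your proof is correct, and it takes a different route from the paper's.

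The paper argues greedily. It fills the sinks in $\set{K}$ in non-increasing order of $P_{jl}$ and observes that $(j,l)^{(\hat k)}$ is the first sink whose demand cannot be fully met. It gives that sink the residual supply and then simply asserts that no feasible allocation yields a higher objective. Your closing exchange-argument remark is the variant closest to this.

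You instead certify optimality by weak duality. You exhibit the dual point $\beta=P_{(j,l)^{(\hat k)}}$, $\alpha_{jl}=\max\{0,P_{\hat i jl}-\beta\}$, whose objective matches the primal value. This gives a proof exactly where the paper offers only an assertion. It also handles ties automatically, since you only use the weak inequalities of the sort.

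Your route also makes the hidden hypotheses visible. One is $P_{\hat i jl}\ge 0$, which the paper states only in \Cref{prop:dual_solutions}; without it even Case 1 fails, because an arc with negative price should not be filled. The other is the possibility $\hat k=1$, which the statement's condition $2\le\hat k$ wrongly excludes.

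Moreover, your certificate is precisely the closed-form dual of \Cref{prop:dual_solutions}, case 2. Since $\hat r_{(j,l)^{(\hat k)}}<\xi_{\hat i j^{(\hat k)}}$, the largest price among unsaturated sinks with positive demand is $P_{(j,l)^{(\hat k)}}$. Hence your $\alpha$ and $\beta$ coincide with the paper's $\hat\alpha$ and $\hat\beta$, and your argument establishes both propositions at once. The paper derives the dual separately via complementary slackness.

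The greedy route is shorter and more intuitive. However, it relies on the reader accepting that greedy is optimal for a unit-weight fractional knapsack.

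One small step to state explicitly: to pass from LP optimality to optimality for the integer program $Q^{\hat i}(x,s,\xi)$, note that $\hat r$ is integral. This is immediate, since its entries are integer combinations of the $\xi_{\hat i j}$ and $\sum_{v}s_{v\hat i}$.
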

\begin{proof}{Proof of \Cref{prop: veh_lack 2}}
\color{black}
The proof of the first part of \Cref{prop: veh_lack 2} is trivial. If the supply is sufficient, all rental demand is satisfied and constraint \eqref{constr_tp2} becomes redundant. 

When the total supply is smaller than the demand, we first fulfill the demand to sinks $(j, l) \in \set{K}$ with non-increasing order of $P_{jl}$. Then, we observe that $j^{(\hat{k})}$ is the first state for which demand cannot be fully satisfied. Hence, we allocate the remaining vehicles of $\sum_{v\in\set{V}}s_{v\hat{i}}$ to this station. The demand of the remaining stations is not satisfied. No feasible allocation of demand would generate a higher objective value. \Halmos
\color{black}
\end{proof}
\color{black}

We can now proceed to show that, given a primal solution to $Q^{i}(x,s,\xi)$, a dual solution can likewise be obtained in closed form.

\begin{proposition}
\label{prop:dual_solutions} 
Let $(x, s)$ be a feasible solution to \eqref{RMP}. Assume $P_{ijl}\geq 0$ for all $(i,j)\in\set{A}$, $l\in \set{L}$.
Let $\hat{r}:=(\hat{r}_{jl})_{j\in\set{J}_{\hat{i}}, l\in\set{L}}$ be a primal solution to $Q^i(x, s, \xi)$ for a given $\hat{i}$ and a realization $\xi$. Then the dual solutions to $Q^i(x, s, \xi)$ is obtained as follows. 

If (case 1) $\sum_{v \in \set{V}}s_{v\hat{i}} \geq \sum_{(j, l) \in \set{J}_{\hat{i}} \times \set{L}} \xi_{\hat{i}j} x_{\zeta(\hat{i}), \zeta(j), l}$, then
\begin{align}
        &\hat{\beta} = 0, ~\hat{\alpha}_{jl} = P_{jl} & \forall (j ,l) \in \set{J}_{\hat{i}} \times \set{L} \label{dual: condition1}
\end{align}
Otherwise (case 2)
\begin{align}
        \hat{\beta} = \max_{(j, l) \in \set{J}_{\hat{i}} \times \set{L}}\left\{P_{jl}\vert r_{jl} < \xi_{ij}x_{\zeta(\hat{i}), \zeta(j), l}, \xi_{ij} x_{\zeta(\hat{i}), \zeta(j), l}>0\right\}, ~\hat{\alpha}_{jl} = \max\{0, P_{jl} - \hat{\beta}\}, \forall (j ,l) \in \set{J}_{\hat{i}} \times \set{L} \label{dual: condition2} 
    \end{align} 
\end{proposition}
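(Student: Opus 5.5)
By \Cref{prop:rpwopa_tu} the LP relaxation $Q^i_{LP}(x,s,\xi)$ has the same optimal value as $Q^i(x,s,\xi)$, and the primal solution $\hat r$ of \Cref{prop: veh_lack 2} is optimal for it. It therefore suffices to show that $(\hat\alpha,\hat\beta)$ is optimal for the LP dual
\begin{align*}
\min\ \sum_{(j,l)\in\set{J}_{\hat i}\times\set{L}} \alpha_{jl}\,\xi_{\hat i j}x_{\zeta(\hat i),\zeta(j),l} + \beta\sum_{v\in\set{V}}s_{v\hat i}
\quad \text{s.t. } \alpha_{jl}+\beta\geq P_{jl},\ \alpha_{jl}\geq 0,\ \beta\geq 0 .
\end{align*}
I will do this with complementary slackness: check that $(\hat\alpha,\hat\beta)$ is dual feasible, and that together with $\hat r$ it satisfies the complementary slackness conditions. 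Those conditions are
\begin{enumerate}[(a)]
\item $\hat\alpha_{jl}>0 \Rightarrow \hat r_{jl}=\xi_{\hat i j}x_{\zeta(\hat i),\zeta(j),l}$;
\item $\hat\beta>0 \Rightarrow \sum_{(j,l)}\hat r_{jl}=\sum_{v}s_{v\hat i}$;
\item $\hat r_{jl}>0 \Rightarrow \hat\alpha_{jl}+\hat\beta=P_{jl}$.
\end{enumerate}
Equivalently, I could show that the two objective values coincide.

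\textbf{Case 1.} Set $\hat\beta=0$ and $\hat\alpha_{jl}=P_{jl}$. This is feasible because $P_{jl}\geq 0$. Reduced costs are zero everywhere, so (c) holds. Condition (a) holds because $\hat r$ saturates every demand bound. Condition (b) is vacuous. As a check, the dual objective equals $\sum P_{jl}\xi_{\hat i j}x$, which is the primal value.

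\textbf{Case 2.} Feasibility is immediate: $\hat\alpha_{jl}=\max\{0,P_{jl}-\hat\beta\}$ gives $\hat\alpha_{jl}\geq 0$ and $\hat\alpha_{jl}+\hat\beta\geq P_{jl}$. We need $\hat\beta\geq 0$, which follows from $P\geq 0$. The set over which $\hat\beta$ is maximised is nonempty, since the supply is strictly smaller than the demand and so some positive-demand sink is left unsaturated. By the sorted structure of $\hat r$, that maximiser is $(j,l)^{(\hat k)}$, so $\hat\beta=P_{(j,l)^{(\hat k)}}$. Note that $\hat r_{(j,l)^{(\hat k)}}$ may be $0$; this is why the definition uses ``unsaturated'' rather than ``positive''. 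Then:
\begin{itemize}
\item[(b)] The supply constraint is tight by construction of $\hat r$.
\item[(a)] If $\hat\alpha_{jl}>0$ then $P_{jl}>\hat\beta$. Such a sink either has $x_{\zeta(\hat i),\zeta(j),l}=0$, in which case the bound is $0=\hat r_{jl}$ trivially, or it lies in $\set{K}$ ahead of position $\hat k$ in the sorted order, in which case it is fully served.
\item[(c)] If $\hat r_{jl}>0$ then $(j,l)$ is among the first $\hat k$ sorted elements, so $P_{jl}\geq\hat\beta$ and $\hat\alpha_{jl}+\hat\beta=P_{jl}$.
\end{itemize}

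The main obstacle is ties in $P$. With ties, the sink identified by the max in \eqref{dual: condition2} may not be exactly $(j,l)^{(\hat k)}$, and sinks with $P_{jl}=\hat\beta$ may be only partially served. I will handle this by arguing only with inequalities. Every unsaturated positive-demand sink has $P\leq\hat\beta$ by the definition of the max. Every sink with $\hat r>0$ has $P\geq\hat\beta$, because a greedy optimal solution never serves a cheaper sink while a more expensive one is unsaturated; this is where the optimality of $\hat r$ from \Cref{prop: veh_lack 2} is used. These two facts suffice for (a) and (c), and primal–dual optimality follows.
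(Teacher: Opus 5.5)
Your proof is correct and uses the same complementary-slackness argument as the paper. The paper reads $\hat\beta$ and $\hat\alpha$ off the slackness conditions in each case, splitting case 1 into strict and equality sub-cases. Your version is slightly more complete: by checking dual feasibility together with all three conditions (a)--(c) --- in particular tightness of the supply constraint and $P_{jl}\geq\hat\beta$ on served sinks, including under ties --- you actually establish sufficiency, which the paper's derivation leaves implicit.
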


\begin{proof}{Proof of Proposition \ref{prop:dual_solutions}.}
Given a specific origin station $\hat{i}$ and a certain scenario $\xi$, the dual problem to $Q^i(x, s, \xi)$ is
\begin{subequations}
\label{dual_problem:tp}
    \begin{align}
        \min &Q^D=\sum_{(j, l) \in \set{J}_{\hat{i}} \times \set{L}} \xi_{\hat{i} j} x_{\zeta(\hat{i}),\zeta(j), l} \alpha_{jl} + \sum_{v \in \set{V}}s_{v \hat{i}} \beta & \label{dual_sp_obj:tp}\\
        s.t.~& \alpha_{jl} + \beta \geq P_{jl}, &\forall (j,l) \in \set{J}_{\hat{i}} \times \set{L} \label{dual_sp_constr:tp}\\
        &\alpha_{jl} \geq 0, & \forall (j, l) \in \set{J}_{\hat{i}} \times \set{L}\\
        &\beta \geq 0 &
    \end{align}
\end{subequations}
We prove case 1 and case 2 separately.

Case 1. Consider first the sub-case where $\sum_{v \in \set{V}}s_{v\hat{i}} > \sum_{(j, l) \in \set{J}_{\hat{i}} \times \set{L}} \xi_{\hat{i}j} x_{\zeta(\hat{i}),\zeta(j),l}$. We have 
$$\sum_{v \in \set{V}}s_{v\hat{i}} > \sum_{(j, l) \in \set{J}_{\hat{i}} \times \set{L}} \xi_{\hat{i}j} x_{\zeta(\hat{i}),\zeta(j),l} \stackrel{\eqref{constr_tp1}}{\geq} \sum_{(j, l) \in \set{J}_{\hat{i}} \times \set{L}} \hat{r}_{jl}$$ 
Hence, constraints \eqref{constr_tp2} are not tight. Complementary slackness conditions state 
$$\beta(\sum_{v \in \set{V}}s_{v \hat{i}} - \sum_{(j,l)\in \set{J}_{\hat{i}} \times \set{L}} r_{jl})=0$$
Hence, we must set $\hat{\beta}=0$. 
Then, from \eqref{dual_problem:tp} we obtain that $\hat{\alpha}_{jl} = P_{jl}$ for all $(j,l)\in\set{J}_{\hat{i}}\times\set{L}$. 

In the remaining sub-case, namely when $\sum_{v \in \set{V}}s_{v\hat{i}} = \sum_{(j, l) \in \set{J}_{\hat{i}} \times \set{L}} \xi_{\hat{i}j} x_{\zeta(\hat{i}),\zeta(j),l}$. Then, by optimality of $\hat{r}_{jl}$ we have (see \Cref{prop: veh_lack 2})
$$\hat{r}_{jl} = \xi_{\hat{i}j} >0 , \forall(j, l) \in \{(j, l) \in \set{J}_{\hat{i}} \times \set{L} | \xi_{\hat{i}j} x_{\zeta(\hat{i}), \zeta(j), l} > 0\}$$ 
The corresponding complementarity slackness conditions require that 
$$P_{jl} - \alpha_{jl} - \beta = 0$$
Substituting for $\alpha_{jl}$ and $\sum_{v\in\set{V}}s_{v\hat{i}}$ in \eqref{dual_sp_obj:tp} we obtain the objective function as
$$\sum_{(j,l)\in\set{J}_{\hat{i}} \times \set{L}: \xi_{ij} x_{\zeta(\hat{i}), \zeta(j), l} > 0} P_{jl} \xi_{ij}$$
Hence, the claimed $\hat{\alpha}_{jl}$, $\hat{\beta}$ values are in the set of optimal solutions.

Case 2. We have
$$\sum_{v\in \set{V}}s_{v\hat{i}} < \sum_{(j, l) \in \set{J}_{\hat{i}} \times \set{L}} \xi_{\hat{i}j} x_{\zeta(\hat{i}), \zeta{j}, l}$$
Then, there exist some pair $(j,l)$ with $\xi_{\hat{i}j} x_{\zeta(\hat{i}), \zeta{j}, l} > 0$ for which the demand can not be fully satisfied, i.e., $\hat{r}_{jl} < \xi_{\hat{i}j} x_{\zeta(\hat{i}), \zeta{j}, l} $.
The complementary condition states 
$$\alpha_{jl}(\xi_{\hat{i}j} x_{\zeta(\hat{i}), \zeta{j}, l} - r_{jl})=0$$
Hence, we need $\hat{\alpha}_{jl} = 0$ for these $(j, l)$ pairs.
The corresponding dual constraints reduce to
$$\beta \geq P_{jl}$$
Hence, we set 
$$\hat{\beta} = \max_{(j, l) \in \set{J}_{\hat{i}} \times \set{L}}\left\{P_{jl}\vert r_{jl} < \xi_{ij}x_{\zeta(\hat{i}), \zeta(j), l}, \xi_{ij} x_{\zeta(\hat{i}), \zeta(j), l}>0\right\}$$ 

The remaining $\hat{\alpha}_{jl}$ are necessarily set to $\max\{0, P_{jl}-\hat{\beta}\}$ by \eqref{dual_sp_constr:tp} and nonnegativity of $\alpha_{jl}$.
Thereby, optimal solution is $\hat{\alpha}_{jl}=\max\{0, P_{jl}-\hat{\beta}\}, \forall (j, l) \in \set{J}_{\hat{i}} \times \set{L}$.
\Halmos
\end{proof}

\subsection{Distribution-specific integer optimality cuts} \label{sec:ds_cuts_integer}
\color{black}
In this section we derive integer optimality cuts for the RPWPA, i.e., \eqref{eq:rpwpa}. These cuts are applicable every time the recourse problem takes the form of a MILP. 
The only assumption required is that $Q(x,s)$ is bounded and can be computed for all $(x,s)$.
\color{black}

We start by identifying constants $U_d$ for each distribution $d\in\set{D}$ that satisfy
$$U_d \geq \max_{(x, s) \in \set{C}_d}Q(x,s)$$
and a constant $U$ that satisfies
$$\infty > U \geq \max_{d \in \set{D}} U_d - \min_{d \in \set{D}} U_d$$
These constants exist under the reasonable assumption that the problems $Q(x,s,\xi)$ are bounded with probability one for all $x$ and $s$ (i.e., the CSO cannot maximize revenue indefinitely).
Next, we define the function $l: \set{Z} \times \set{Z} \times \set{D} \rightarrow \mathcal{L}$ such that, for all $z_1,z_2\in\set{Z}$ and $d\in \set{D}$, $l(z_1,z_2,d)$ returns the pricing level $l\in\set{L}$ that must be applied to trips from zone $z_1$ to zone $z_2$ in order for $P_d$ to accurately describe the distribution of demand. 

The following result introduces a finite set $\set{O}$ of optimality cuts. 

\begin{proposition}\label{prop: IP_cut}
Let $(x^k, s^k, \phi^k)$ be the $k$-th feasible solution to \eqref{RMP} and $d_k \in \set{D}$ such that $(x^k, s^k) \in \set{C}_{d_k}$. 
Define 
\begin{itemize}
    \item $\set{X}^+_k:=\{(z_1,z_2,l)\in \set{Z} \times \set{Z} \times \set{L}\vert x_{z_1, z_2,l}^k=1 \}$
    \item $\set{X}^-_k:=\{(z_1,z_2,l)\in \set{Z} \times \set{Z} \times \set{L}\vert x_{z_1, z_2,l}^k=0 \}$ 
    \item $\set{S}^+_k:=\{(v,i)\in \set{V} \times \set{I}\vert s_{vi}^k=1 \}$
    \item $\set{S}^-_k:=\{(v,i)\in \set{V} \times \set{I}\vert s_{vi}^k=0 \}$
\end{itemize}
The set of cuts
\begin{align}
    \label{form: IP_cut}
    &\phi \leq o^k(x,s)  & k=1,\ldots, \vert \set{C}\vert 
\end{align}
implies $\phi \leq Q(x,s) $ for all $(x,s)\in \set{C}$
where 
\begin{align}
\begin{split}
    o^k(x,s)= \left(Q(x^k, s^k) - U_{d_k}\right)\left(\sum_{(z_1,z_2, l)\in \mathcal{X}^+_k}x_{z_1,z_2,l}-\sum_{(z_1,z_2,l) \in \mathcal{X}^-_k}x_{z_1,z_2,l} + \sum_{(v, i)\in \mathcal{S}^+_k}s_{vi}-\sum_{(v,i) \in \mathcal{S}^-_k}s_{vi}\right)
    \\ -\left(Q(x^k, s^k)-U_{d_k}\right)\left(\vert \set{X}^+_k \vert + \vert \set{S}^+_k \vert - 1\right)  +U_{d_k}+ U\left(2\vert \set{Z} \vert - \sum_{z_1 \in \set{Z}}\sum_{z_2 \in \set{Z}}x_{z_1,z_2, l(z_1,z_2,d_k)}\right)
\end{split} 
\end{align} 
\end{proposition}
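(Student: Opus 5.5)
The plan is to fix an arbitrary $(x,s)\in\set{C}$ and show two things. First, $\min_k o^k(x,s) \leq Q(x,s)$, because one cut is tight at $(x,s)$. Second, every cut is a valid upper bound, $o^k(x,s)\geq Q(x,s)$ for all $k$. Together these give that the family \eqref{form: IP_cut} is equivalent to $\phi\leq Q(x,s)$ on $\set{C}$. Both steps rest on the standard Laporte–Louveaux computation for binary vectors. Define
$$\Delta^k(x,s):=\sum_{\set{X}^+_k}x-\sum_{\set{X}^-_k}x+\sum_{\set{S}^+_k}s-\sum_{\set{S}^-_k}s .$$
Since all variables are binary, $\Delta^k(x,s)\leq |\set{X}^+_k|+|\set{S}^+_k|$, with equality if and only if $(x,s)=(x^k,s^k)$. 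Otherwise $\Delta^k(x,s)\leq |\set{X}^+_k|+|\set{S}^+_k|-1$, because at least one coordinate differs and each differing coordinate lowers $\Delta^k$ by one.

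Next I analyse the distribution-specific term $U(2|\set{Z}|-\sum_{z_1,z_2}x_{z_1,z_2,l(z_1,z_2,d_k)})$. I read $2|\set{Z}|$ as the number of zone pairs, so that the bracket vanishes precisely when $x$ applies all the prices of $d_k$, i.e. when $(x,s)\in\set{C}_{d_k}$. If $(x,s)\notin\set{C}_{d_k}$, at least one price level differs, so the bracket is at least $1$. I will flag that this needs the constant to equal $|\set{Z}|^2$; otherwise the argument only needs the bracket to be $\geq 0$ inside the region and $\geq 1$ outside it, plus a nonnegative slack.

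Tightness: take the index $k$ with $(x^k,s^k)=(x,s)$. Such an index exists because the cuts are indexed over all of $\set{C}$. Then $\Delta^k=|\set{X}^+_k|+|\set{S}^+_k|$, so the first two terms sum to $Q(x,s)-U_{d_k}$. The $U$ term is zero, and hence $o^k(x,s)=Q(x,s)$.

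Validity, which I expect to be the main obstacle, splits into two cases for a generic $k$ with $(x^k,s^k)\neq(x,s)$. Throughout, note that $Q(x^k,s^k)-U_{d_k}\leq 0$.
\begin{enumerate}
\item If $(x,s)\in\set{C}_{d_k}$, then $\Delta^k\leq |\set{X}^+_k|+|\set{S}^+_k|-1$. Multiplying by the nonpositive coefficient makes the first two terms together at least $0$. Hence $o^k(x,s)\geq U_{d_k}\geq Q(x,s)$ by the definition of $U_{d_k}$.
\item If $(x,s)\in\set{C}_{d}$ with $d\neq d_k$, then $\Delta^k$ can be very negative. The first two terms are then $(Q(x^k,s^k)-U_{d_k})(\Delta^k-|\set{X}^+_k|-|\set{S}^+_k|+1)$. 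This is a product of two nonpositive factors, hence nonnegative, so $o^k(x,s)\geq U_{d_k}+U\cdot(\text{bracket})\geq U_{d_k}+U$. By the choice of $U$, $U_{d_k}+U\geq U_{d_k}+\max_d U_d-\min_d U_d\geq U_d\geq Q(x,s)$.
\end{enumerate}
The delicate point is keeping the signs straight in the second case and confirming that the bracket is at least $1$ outside $\set{C}_{d_k}$. Combining tightness with validity yields the claim.
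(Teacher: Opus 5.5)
Your proof is correct and follows the paper's route. The paper proves the proposition by a contradiction-phrased version of your tightness step, $o^k(x^k,s^k)=Q(x^k,s^k)$. It proves your two-case validity analysis separately in Remark~\ref{rem:ip:safe}, where you are more explicit than the paper in using $U\geq \max_d U_d-\min_d U_d$ to get $U_{d_k}+U\geq U_d\geq Q(x,s)$. Your flag on the constant is also right: the paper's step $U(2|\set{Z}|-2|\set{Z}|)=0$ tacitly needs $2|\set{Z}|$ to equal the number of priced zone pairs, which is $|\set{Z}|^2$ under \eqref{constr: price_set}. Your reading is the one under which both the tightness identity and the validity cases go through.
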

\begin{proof}{Proof of \Cref{prop: IP_cut}}
    By contradiction, assume \eqref{RMP} includes cuts \eqref{form: IP_cut} for all $k=1,\ldots, \vert\set{C}\vert $ (i.e., for all, finitely many, $(x,s)$ solutions).
    Assume at the iteration $k=\vert\set{C}\vert+1$ we obtain a solution 
    $(x^k, s^k, \phi^k)$ such that $\phi^k >Q(x^k,s^k)$.
    Observe that the cut generated for $(x^k,s^k)$ (which is by assumption present in \eqref{RMP})
    gives
    \begin{align*}
    \phi \leq o^k(x^k,s^k)= &\left(Q(x^k, s^k) - U_{d_k}\right)\left(\vert \mathcal{X}^+_k\vert + \vert \mathcal{S}^+_k\vert\right)\\ 
    -&\left(Q(x^k, s^k)-U_{d_k}\right)\left(\vert \set{X}^+_k \vert + \vert \set{S}^+_k \vert - 1\right)  +U_{d_k}+ U\left(2\vert \set{Z} \vert - 2\vert \set{Z} \vert\right)\\
    =&\left(Q(x^k, s^k)-U_{d_k}\right)+U_{d_k}\\
    =&Q(x^k, s^k)
    \end{align*}
    This contradicts the presence of all $\vert\set{C}\vert$ cuts in \eqref{RMP} and completes the proof.\Halmos
\end{proof}

Hence, upon finding a solution $(x^k, s^k, \phi^k)$ for which $\phi^k > Q(x^k, s^k)$, we can generate cut $o^k$ as in \eqref{form: IP_cut}. For $(x,s)=(x^k, s^k)$ this cut reduces to $\phi \leq Q(x^k, s^k)$ as shown in the proof and hereby \textcolor{black}{cuts off the solution $(x^k, s^k, \phi^k)$.} This cut is, however, redundant, and thus safe, for solutions satisfying $\phi^k \leq Q(x^k, s^k)$ as shown next.

\begin{remark}\label{rem:ip:safe}
Cuts \eqref{form: IP_cut} do not cut off solutions $(x^n,s^n)\neq (x^k,s^k)$.
    
\end{remark}
\begin{proof}{Proof of \Cref{rem:ip:safe}}
We assess two separate cases, namely when $d_n\neq d_k$ and when $d_n=d_k$.

Assume $d_n\neq d_k$. Then the last term of $o^k$ reduces to 
$$U\left(2\vert \set{Z} \vert - \sum_{z_1 \in \set{Z}}\sum_{z_2 \in \set{Z}}x^n_{z_1,z_2, l(z_1,z_2,d_k)}\right) \geq U\left(2\vert \set{Z} \vert - (2\vert \set{Z} \vert-1)\right)= U$$
Furthermore, 
$$\left(\sum_{(z_1,z_2, l)\in \mathcal{X}^+_k}x^n_{z_1,z_2,l}-\sum_{(z_1,z_2,l) \in \mathcal{X}^-_k}x^n_{z_1,z_2,l} + \sum_{(v, i)\in \mathcal{S}^+_k}s^n_{vi}-\sum_{(v,i) \in \mathcal{S}^-_k}s^n_{vi}\right)\leq \left(\vert \set{X}^+_k \vert + \vert \set{S}^+_k \vert - 1\right)$$
so that 
$$\phi \leq o^k(x^n,s^n)= K +U_{d_k}+ U $$
with $K > 0$ since $(Q(x^k,s^k)-U_{d_k})<0$, and is therefore valid at $(x^n,s^n)$.

When $d_n=d_k$ the last term is zero and the cut reduces to 
$$\phi \leq o^k(x^n,s^n)= K +U_{d_k} $$
which is, again, valid under distribution $d_k=d_n$.\Halmos
\end{proof}
From \Cref{rem:ip:safe} we see that the cuts do not cut off solutions other than that for which they were generated. In addition, when a solution enforces the same distribution as the one enforced by a given $x^k$, the cut enforces an upper bound greater than the distribution-specific upper-bound $U_{d_k}$.

The computational cost of generating cuts \eqref{form: IP_cut} is essentially that of computing 
$$Q(x^k, s^k) =  \sum_{n=1}^{N_{d_k}}  \pi^{nd_k} Q(x^k, s^k, \xi^{nd_k})$$ for each $(x^k,s^k)$ returned by \eqref{RMP}. 

\subsection{Valid inequalities}
\label{sec: VIs}
We devise a set of valid inequalities that provide non-trivial upper bounds on $\phi$ within each and across different subsets $\set{C}_d$. 

\color{black}
The first set of valid inequalities is distribution-specific. It is constructed by assuming that the operator has unlimited supply and is able to fulfill all materialized demand. The valid inequality is as follows \color{black}
\begin{align}
    \label{VI1} \tag{VI-1}
    \phi & \leq  \sum_{n=1}^{N_d} \pi^{nd} \sum_{(i,j) \in \set{A}} \left(\sum_{l \in \set{L}} x_{\zeta(i),\zeta(j), l} P_{ijl} \right)  \xi^{nd}_{ij} + U(2|\set{Z}| - \sum_{z_1 \in \set{Z}}\sum_{z_2 \in \set{Z}} x_{z_1,z_2, l(z_1,z_2,d)}), &\forall d \in \set{D}
\end{align}
The first term computes the expected revenue under distribution $d$ assuming demand is fully satisfied at the price determined by $x$ decisions. 
The last term makes the cut redundant for distributions other than $d$. \color{black} Valid inequality \eqref{VI1} applies to both RPWoPA and RPWPA formulations. \color{black}

The second set of valid inequalities is also distribution-specific. It is constructed by assuming that the operator receives unlimited demand and is able to entirely allocate supply. For the RPWoPA the valid inequality is as follows
\begin{align}
\label{VI2_RPWoPA} \tag{VI-2-RPWoPA}
\phi &\leq \sum_{n=1}^{N_d} \pi^{nd} \sum_{i\in\set{I}}\max_{j:(i,j) \in \set{A}}\{ P_{i,j,l(\zeta(i),\zeta(j), d)}\} \sum_{v \in \set{V}}s_{vi}  + U(2|\set{Z}| - \sum_{z_1 \in \set{Z}}\sum_{z_2 \in \set{Z}} x_{z_1,z_2, l(z_1,z_2,d)}), &\forall d \in \set{D}
\end{align}  
Here, the first term on the right-hand side computes the expected revenue obtained under distribution $d$ assuming all vehicles at a given station $i$ are assigned to the destination for which the largest price is asked for. For the RPWPA the valid inequality is as follows
\begin{align}
\label{VI2_RPWPA} \tag{VI-2-RPWPA}
\phi &\leq \sum_{n=1}^{N_d} \pi^{nd} \sum_{(i,j) \in \set{A}} P_{i,j,l(\zeta(i),\zeta(j), d)} (\theta_{ij}^{nd}\sum_{v \in \set{V}}s_{vi} + 1) + U(2|\set{Z}| - \sum_{z_1 \in \set{Z}}\sum_{z_2 \in \set{Z}} x_{z_1,z_2, l(z_1,z_2,d)}), &\forall d \in \set{D}
\end{align}  
In this case, the first term on the right-hand side computes the expected revenue obtained under distribution $d$ assuming all vehicles are rented out.

The third set of valid inequalities is distribution-independent and applies to both the RPWoPA and RPWPA. It is constructed by exploiting concavity of the recourse problem (or its LP relaxation in the RPWPA case). 
Assume, for now, we use the RPWoPA formulation. Observe $Q(x,s,\xi)$ is concave in $\xi$ (see e.g., \cite{birge2011introduction}). 
Given a pair $(x,s)$, using Jensen's inequality we can state that
\begin{align}
    \mathbb{E}_{P_x}\left[Q(x, s, \boldsymbol{\xi})\right] & \leq Q(x, s, \mathbb{E}_{P_x}[\boldsymbol{\xi}]) \nonumber\\
    & \leq \max_{d \in \set{D}} Q(x, s, \mathbb{E}_{P_d}[\boldsymbol{\xi}])\nonumber \\
    & \leq \max_{d\in \set{D}} \sum_{(i,j) \in \set{A}} \sum_{l \in \set{L}}P_{ijl} x_{\zeta(i),\zeta(j), l} \mathbb{E}_{P_d}[\xi_{ij}]\label{eq:jensens:eq3}\\
    & = \sum_{(i,j) \in \set{A}} \sum_{l \in \set{L}} P_{ijl} x_{\zeta(i),\zeta(j),l} \max_{d \in \set{D}}\mathbb{E}_{P_d}[\xi_{ij}]\nonumber
\end{align}
where in \eqref{eq:jensens:eq3} the argument of the maximization provides an upper bound to $Q(x,s,\mathbb{E}_{P_d}[\xi_{ij}])$, as in \eqref{VI1}.
Therefore, in the RPWoPA case we can use valid inequality 
\begin{align}
\label{VI3} \tag{VI-3}
    \phi \leq \sum_{(i,j) \in \set{A}} \sum_{l \in \set{L}} P_{ijl} x_{\zeta(i),\zeta(j),l}  \max_{d \in \set{D}}\mathbb{E}_{P_d}[\xi_{ij}]
\end{align}

When using the RPWPA formulation, $Q(x,s,\xi)$ forms an ILP and concavity does not necessarily hold. However, concavity holds for the linear programming relaxation of the problem.
This entails that we can replace $Q(x,s,\xi)$ by its LP relaxation $Q^{LP}(x,s,\xi)$ in the derivation above. Hence, \eqref{VI3} applies under the RPWPA formulation as well. 

\color{black}
In addition, for the RPWPA problem, we can support cuts \eqref{form: IP_cut} with duality-based cuts. We do that by generating duality-based cuts from the LP relaxation of problem \eqref{form: rpwpa_i}. 
In particular, at a certain iteration $t$, let $(\rho^{A,tn}_{ijl}, \rho^{B,tn}_{i}, \rho^{C, tn}_{ijl})$ be the optimal dual solution corresponding to constraints \eqref{Constr: a} - \eqref{Constr: c} for each station $i\in\set{I}$ and each scenario $n=1,\ldots,N_{d_t}$ under distribution $d_t$. The corresponding duality-based cut is constructed as
\begin{align}
    \label{form: LP_cuts_dual_initial}
    \begin{split}
        \phi \leq \sum_{i\in\set{I}} \sum_{n=1}^{N_{d_t}} \pi^{nd_t}\bigg\{\sum_{j \in \set{J}_i} \sum_{l \in \set{L}} \bigg [\rho_{ijl}^{A,tn}\xi_{ij}^{nd_t}x_{\zeta(i), \zeta(j), l} + \rho_{i}^{B,tn}\sum_{v\in\set{V}}s_{vi} + \rho^{C,tn}_{ijl} (\theta_{ij} \sum_{v\in\set{V}}s_{vi}+1) \bigg] \bigg \} \\+ U(2|\set{Z}| - \sum_{z_1 \in \set{Z}} \sum_{z_2 \in \set{Z}} x_{z_1,z_2, l(z,d_t)}) 
    \end{split}
\end{align}
\color{black}
These cuts provide upper bounds to the true value of $\sum_{n=1}^{N_{d_t}} \pi^{nd_t}Q(x,s,\xi^{nd_t})$ for $(x,s)\in\set{C}_{d_t}$, and are redundant for $(x,s)\notin\set{C}_{d_t}$. \color{black} 

Finally, we show that the duality-based cuts \eqref{form: LP_cuts_tp_dual} developed for the RPWoPA formulation can be re-used as valid inequalities for the RPWPA formulation. 
\begin{proposition}
\label{prop:VI_validity}
   Optimality cuts \eqref{form: LP_cuts_tp_dual} are valid inequalities when using the RPWPA.
\end{proposition}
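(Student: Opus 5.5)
The argument rests on the observation, announced in the text when the RPWPA was introduced, that RPWPA is a restriction of RPWoPA. For every feasible $(x,s)$ and every realization $\xi$, the feasible region of RPWPA (see \aref{sec: recourse problem wa}) is a subset of the feasible region of \eqref{form: rpwopa}. This holds because RPWPA keeps the demand constraints (\eqref{Constr: a}, analogous to \eqref{constr_tp1}), keeps the supply constraints (\eqref{Constr: b}, analogous to \eqref{constr_tp2}), and adds the proportional-assignment constraints \eqref{Constr: c}. Both problems have the same objective $\sum P_{ijl} r_{ijl}$. Therefore
\[
Q^{\mathrm{RPWPA}}(x,s,\xi)\le Q^{\mathrm{RPWoPA}}(x,s,\xi)
\]
for all $(x,s)\in\set{C}$ and all $\xi$. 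Taking the expectation under $P_d$ for the distribution $d$ enforced by $x$ then gives $Q^{\mathrm{RPWPA}}(x,s)\le Q^{\mathrm{RPWoPA}}(x,s)$. I would state this monotonicity first, as a lemma.

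The second step is to show that each cut \eqref{form: LP_cuts_tp_dual} is an upper bound on $Q^{\mathrm{RPWoPA}}(x,s)$ at every $(x,s)\in\set{C}$. I would take this from \cite{pantuso2025shaped}, but I would also verify it directly. Fix $d$ and dual extreme points $(\alpha^{nd},\beta^{nd})$, and consider two cases.

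If $(x,s)\in\set{C}_d$, the $U$-term vanishes. Because the dual feasible region of \eqref{form: rpwopa} does not depend on $(x,s,\xi)$, weak LP duality and \Cref{prop:rpwopa_tu} give, for each $i$ and $n$,
\[
\sum_{(j,l)}\alpha^{nd}_{ijl}\,\xi^{nd}_{ij}\,x_{\zeta(i),\zeta(j),l}+\beta^{nd}_i\sum_{v\in\set{V}}s_{vi}\;\ge\; Q^i_{LP}(x,s,\xi^{nd})\;=\;Q^i(x,s,\xi^{nd}).
\]
Summing over $i$ and weighting by $\pi^{nd}$, the right-hand side of the cut is at least $Q^{\mathrm{RPWoPA}}(x,s)$.

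If $(x,s)\notin\set{C}_d$, at least one zone pair uses a level different from $l(z_1,z_2,d)$. The $U$-term is then at least $U$, and the dual term is nonnegative because $\alpha,\beta\ge 0$ and $\xi,x,s\ge 0$. Hence the right-hand side is at least $U$, and it remains to compare $U$ with $Q^{\mathrm{RPWoPA}}(x,s)$.

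This comparison is the main obstacle. As written, $U\ge \max_d U_d-\min_d U_d$ does not by itself give $U\ge Q(x,s)$. I would first try to show, as \cite{pantuso2025shaped} presumably do, that $U$ is chosen large enough to dominate the distribution-specific bounds. If needed, I would use nonnegativity of $\min_d U_d$ (revenues are nonnegative since $P_{ijl}\ge0$) together with the intended choice of $U$ as a valid big-$M$. Either way, the cut is satisfied whenever $\phi\le Q^{\mathrm{RPWoPA}}(x,s)$ off $\set{C}_d$.

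To conclude, any $(x,s,\phi)$ feasible for \eqref{form: mp} under RPWPA satisfies
\[
\phi\le Q^{\mathrm{RPWPA}}(x,s)\le Q^{\mathrm{RPWoPA}}(x,s)\le \text{right-hand side of \eqref{form: LP_cuts_tp_dual}}.
\]
So adding these cuts removes no feasible point of the RPWPA master problem, and they are valid inequalities.
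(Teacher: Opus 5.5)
Your proof is correct and uses the same core idea as the paper. RPWPA only adds constraints \eqref{Constr: c} to RPWoPA, so its value is dominated by the RPWoPA value, and \eqref{form: LP_cuts_tp_dual} over-estimates the RPWoPA value. The execution differs, mostly in your favor. The paper routes the comparison through LP relaxations, $Q^{\mathrm{RPWPA}}\le Q^{\mathrm{RPWPA}}_{LP}\le Q^{\mathrm{RPWoPA}}_{LP}=Q^{\mathrm{RPWoPA}}$. It invokes strong duality only at the generating point $(x^t,s^t)$ and leaves validity at all other points to the RPWoPA result of \cite{pantuso2025shaped}. You compare the integer problems directly; their feasible sets are nested, so the LP detour is unnecessary. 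You then use weak duality, with a dual region independent of $(x,s,\xi)$, to cover all of $\set{C}_d$, which is what validity actually requires. For that upper bound you do not even need \Cref{prop:rpwopa_tu}, since $Q_{LP}\ge Q$ suffices; total unimodularity only matters for tightness at $(x^t,s^t)$. You are also right that $U\ge\max_d U_d-\min_d U_d$, as defined in \Cref{sec:ds_cuts_integer}, is not enough for \eqref{form: LP_cuts_tp_dual}, which lacks the $+U_{d}$ offset present in \eqref{form: IP_cut}. However, your suggested remedy via nonnegativity of $\min_d U_d$ points the wrong way: it makes $\max_d U_d-\min_d U_d$ smaller, not larger. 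What you need is $U\ge\max_d U_d$. Then for $(x,s)\in\set{C}_{d'}$ with $d'\neq d$, the dual term is nonnegative and the indicator term is at least $U$, so the right-hand side is at least $U\ge U_{d'}\ge Q(x,s)$. State this as a standing big-$M$ hypothesis, which the paper also relies on implicitly through \cite{pantuso2025shaped}, rather than trying to derive it.
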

\begin{proof}{Proof of \Cref{prop:VI_validity}}
Let $(x^t, s^t)$ be the solution to \eqref{RMP} and $\xi^{nd_t}$ be the $n$-th realization of $\boldsymbol{\xi}$ under distribution $d_t$. 
Denote $Q^{RPWoPA}(x,s,\xi)$ the value of $Q(x,s,\xi)$ when using the RPWoPA and $Q^{RPWoPA}_{LP}(x,s,\xi)$ the value of its LP relaxation. Similarly, denote $Q^{RPWPA}(x,s,\xi)$ the value of $Q(x,s,\xi)$ when using the RPWPA and $Q^{RPWPA}_{LP}(x,s,\xi)$ the value of its LP relaxation. Then the following chain of inequalities holds for all $n=1\ldots,N_{d_t}$, that is
$$Q^{RPWPA}(x^t,s^t,\xi^{nd_t})\leq Q^{RPWPA}_{LP}(x^t,s^t,\xi^{nd_t})\leq Q^{RPWoPA}_{LP}(x^t,s^t,\xi^{nd_t})=Q^{RPWoPA}(x^t,s^t,\xi^{nd_t})$$
where the first inequality holds because $Q^{RPWPA}_{LP}(x^t,s^t,\xi^{nd_t})$ is the LP relaxation of $Q^{RPWPA}(x^t,s^t,\xi^{nd_t})$, the second inequality holds because $Q^{RPWPA}_{LP}(x,s,\xi)$ is obtained by adding a set of constraints to $Q^{RPWoPA}_{LP}(x,s,\xi)$, and the equality holds by \Cref{prop:rpwopa_tu}. Furthermore, the inequalities are preserved when taking the expectation. 

Therefore, we can write
\begin{align*}
    &\sum_{n=1}^{N_{d_t}}\pi^{nd_t}Q^{RPWPA}(x^t,s^t,\xi^{nd_t}) 
    \leq \sum_{n=1}^{N_{d_t}}\pi^{nd_t}Q^{RPWPA}_{LP}(x^t,s^t,\xi^{nd_t})\\
    \leq& \sum_{n=1}^{N_{d_t}}\pi^{nd_t}Q^{RPWoPA}_{LP}(x^t,s^t,\xi^{nd_t}) \stackrel{\texttt{Strong duality}}{=}\sum_{n=1}^{N_{d_t}} \pi^{nd_t} \sum_{i \in \set{I}} \left (\sum_{(j,l)\in \set{J}_i \times \set{L}} \alpha_{ijl}^{nd_t} \xi^{nd_t}_{ij} x^t_{\zeta(i), \zeta(j), l} + \beta_{i}^{nd_t} \sum_{v \in \set{V}} s^t_{vi} \right)\\
    =&\sum_{n=1}^{N_{d_t}} \pi^{nd_t} \sum_{i \in \set{I}}  \left ( \sum_{(j,l)\in \set{J}_i \times \set{L}} \alpha_{ijl}^{nd_t} \xi^{nd_t}_{ij} x^t_{\zeta(i), \zeta(j), l} + \beta_{i}^{nd_t}  \sum_{v \in \set{V}} s^t_{vi} \right) + U(2|\set{Z}| - \sum_{z_1 \in \set{Z}} \sum_{z_2 \in \set{Z}} x^t_{z_1, z_2, l(z_1,z_2,d_t)}) 
\end{align*}
The last equality holds as the last term is zero for $d=d_t$. The above chain of inequalities shows that the right-hand side of cut \eqref{form: LP_cuts_tp_dual} generates a valid upper bound to the true value of  $\sum_{n=1}^{N_{d_t}}\pi^{nd_t}Q^{RPWPA}(x^t,s^t,\xi^{nd_t})$. It follows that optimality cuts \eqref{form: LP_cuts_tp_dual}, generated for the RPWoPA, can be used as valid inequalities for the RPWPA.\Halmos
\end{proof}

Note that cuts \eqref{form: LP_cuts_tp_dual} can be obtained more cheaply than \eqref{form: LP_cuts_dual_initial} as the dual solution $(\alpha^{nd_t}_{ijl}, \beta_i^{nd_t})$ can be obtained analytically (see \Cref{prop:dual_solutions}). Furthermore, cuts \eqref{form: LP_cuts_tp_dual} may provide non-trivial upper bounds in addition to the weak bounds provided by \eqref{form: IP_cut}. 

\color{black}

\Cref{tab:VI_summary} summarizes the applicability of the valid inequalities introduced.
\begin{table}[htbp!]
\caption{Applicability of the valid inequalities developed.}
    \centering
    \small
    \begin{tabular}{c|cccccc}
    \hline
        Problem specification & \eqref{VI1}  & \eqref{VI2_RPWoPA} & \eqref{VI2_RPWPA} & \eqref{VI3} & Cut \eqref{form: LP_cuts_dual_initial} & \textcolor{black}{Cut \eqref{form: LP_cuts_tp_dual}} \\
    \hline
        RPWoPA & ${\surd}$ & ${\surd} $ & & ${\surd}$ &  & \\
        RPWPA  & ${\surd}$ & & ${\surd}$ & ${\surd}$ & $\surd$ &\textcolor{black}{${\surd}$} \\
    \hline
    \end{tabular}
    \label{tab:VI_summary}
\end{table}

\section{Numerical tests on the solution algorithm} \label{sec: computational study}
In this section, we report on extensive experiments on the proposed solution algorithm. We run such experiments on artificially-generated instances of size comparable to real-world instances of the problem. The goal of this section is to provide empirical evidence on the performance of the algorithm. 

We run experiments on both versions of the problem, i.e., with and without proportional allocation decisions, hence forming the RPWPA and RPWoPA version of the second-stage problem. 
In particular, we compare our method (i.e., \LS) against the commercial solver Gurobi 11.0.1 (henceforth \textit{solver}) solving the monolithic formulation of the two versions of the problem. The monolithic formulations can be found in \aref{app:compact_milp_form}. 
The \LSt method is implemented in Python 3.11 using Gurobi 11.0.1 callable libraries. In all tests we set a time limit of $3600$ seconds. All tests were executed on machines equipped with a $2.10$GHz Intel(R) Xeon(R) Gold 6230 40 core CPU and $240$ Gb RAM. No parallelization technique was used when implementing the \LSt method. However, the default parallel search was applied by Gurobi when solving monolithic formulations. 

In the remainder of this section, we introduce three sets of synthetic instances of different sizes in \Cref{sec: instance_gen} while in \Cref{sec: compute_perform} we present and discuss the results of the experiments.

\color{black}
\subsection{Instance generation} 
\label{sec: instance_gen}

We constructed instances based on a synthetic carsharing system. The system is supposed to operate on a rectangular region composed of a collection of $1Km \times 1Km$ units. A carsharing station is randomly located within each unit.
We considered three different sizes of the carsharing system, denoted ``SMALL", ``MEDIUM" and ``LARGE". They have $15$ (i.e., $5\times 3$), $24$ (i.e., $6\times 4$), and $35$ (i.e., $7\times 5$) units, respectively. The units were further partitioned into $3$, $4$, or $5$ pricing zones. The partitioning procedure will be illustrated later. Furthermore, each synthetic carsharing system was associated with different volumes of customers and vehicles as shown in \Cref{tab:instance_para}. 
In total, we obtained $81$ synthetic problem instances. 

\begin{table}[!htbp]
   \caption{Instance parameters on size of the carsharing system network.}
    \centering
    \small
    \begin{tabular}{ccccc}
    \hline
   \multirow{1}[4]{*}{Region size} & Station no. & Zone no. & Customer no. & Vehicle no.\\
    & $|\set{I}|$ & $|\set{Z}|$ & $K$ & $|\set{V}|$ \\
    \hline
    SMALL & 15 (5$\times$3)  & 3, 4, 5 & 20, 40, 60  &  40, 50, 60 \\
    MEDIUM & 24 (6$\times$4) & 3, 4, 5 & 40, 60, 80 & 60, 80, 100 \\
    LARGE & 35 (7$\times$5) & 3, 4, 5 & 60, 80, 100 & 80, 100, 120 \\
    \hline
    \end{tabular}
    \label{tab:instance_para}
\end{table}

To simulate a geographical distribution of customers we proceeded as follows.
We randomly assigned each unit (hence, each station) a probability of being the origin and a probability of being the destination of a trip. 
\Cref{fig:css_map} reports the origin probabilities used for all the SMALL, MEDIUM and LARGE instances. 
Then, for each customer $k$ in the instance, we randomly picked one origin and one destination unit according to the assigned probabilities. We then randomly picked the coordinates of the final origin and destination of the trip within those units. 
The initial station of shared vehicles was instead obtained by randomly assigning vehicles to stations with equal probability.

\begin{figure}[!hbtp]
    \centering
    \subfloat[SMALL region.]{
        \includegraphics[width=0.33\linewidth]{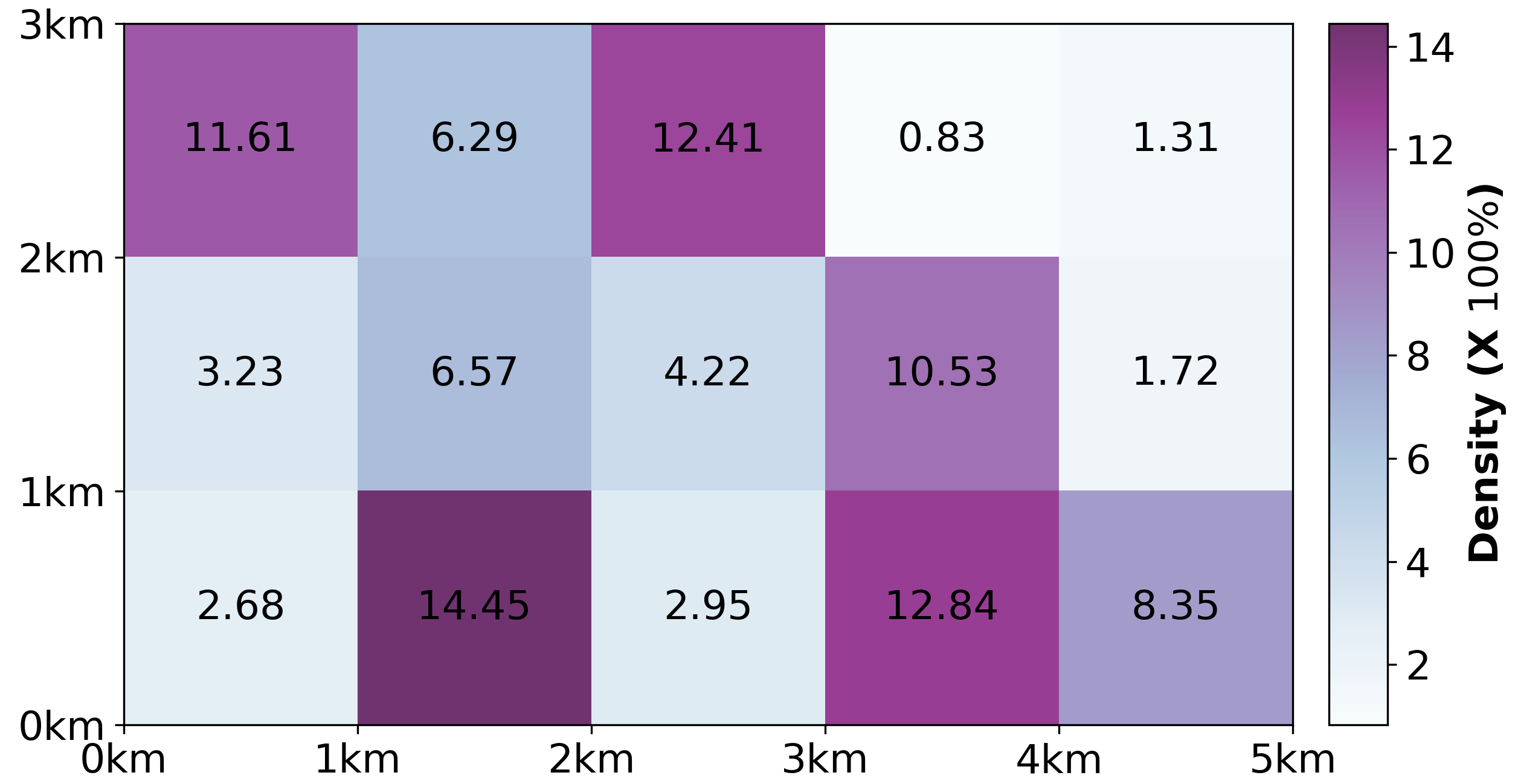}
        \label{fig:css_map15}
    }
    \subfloat[MEDIUM region.]{
        \includegraphics[width=0.30\linewidth]{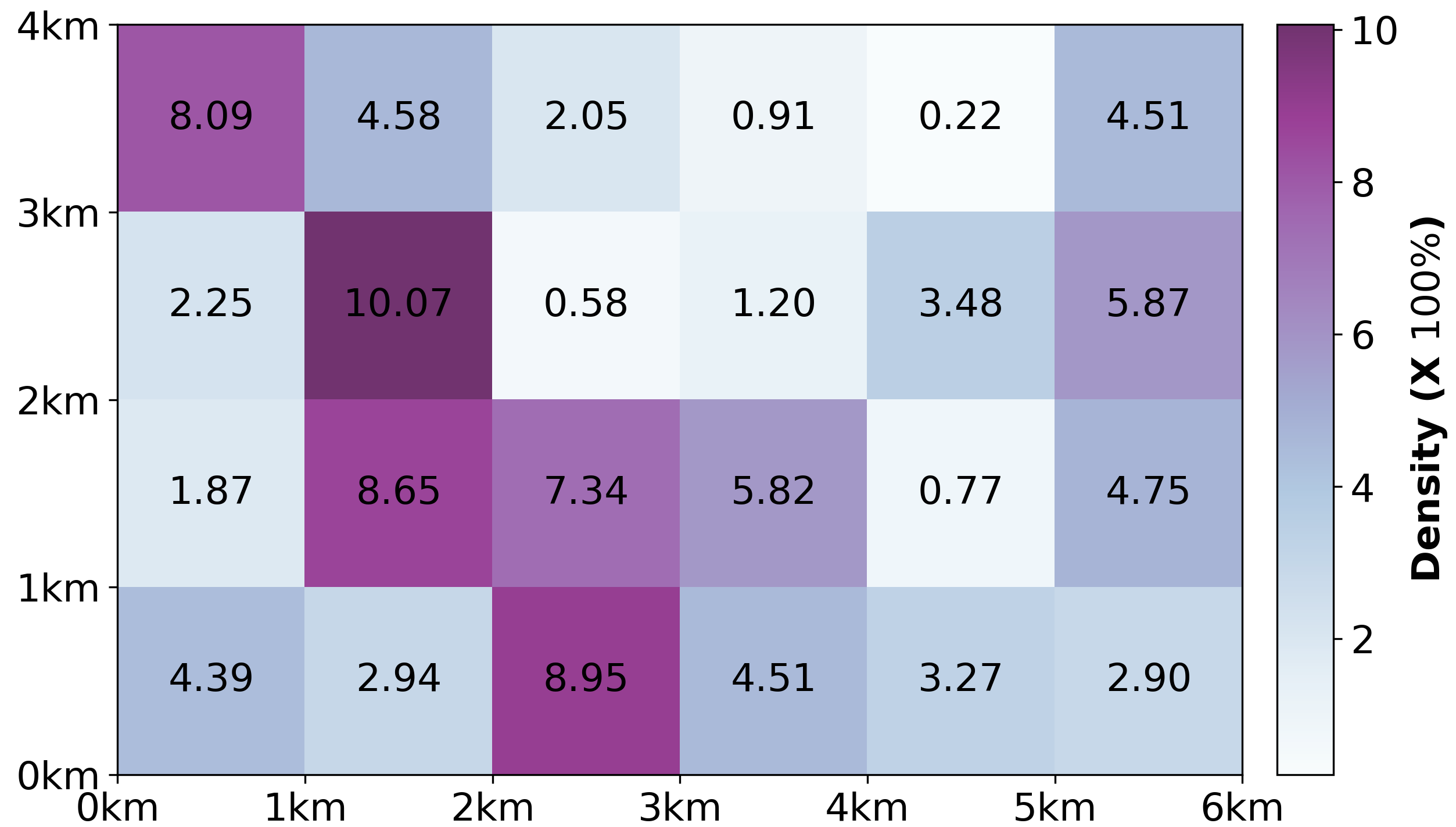}
        \label{fig:css_map24}
    }
    \subfloat[LARGE region.]{
        \includegraphics[width=0.28\linewidth]{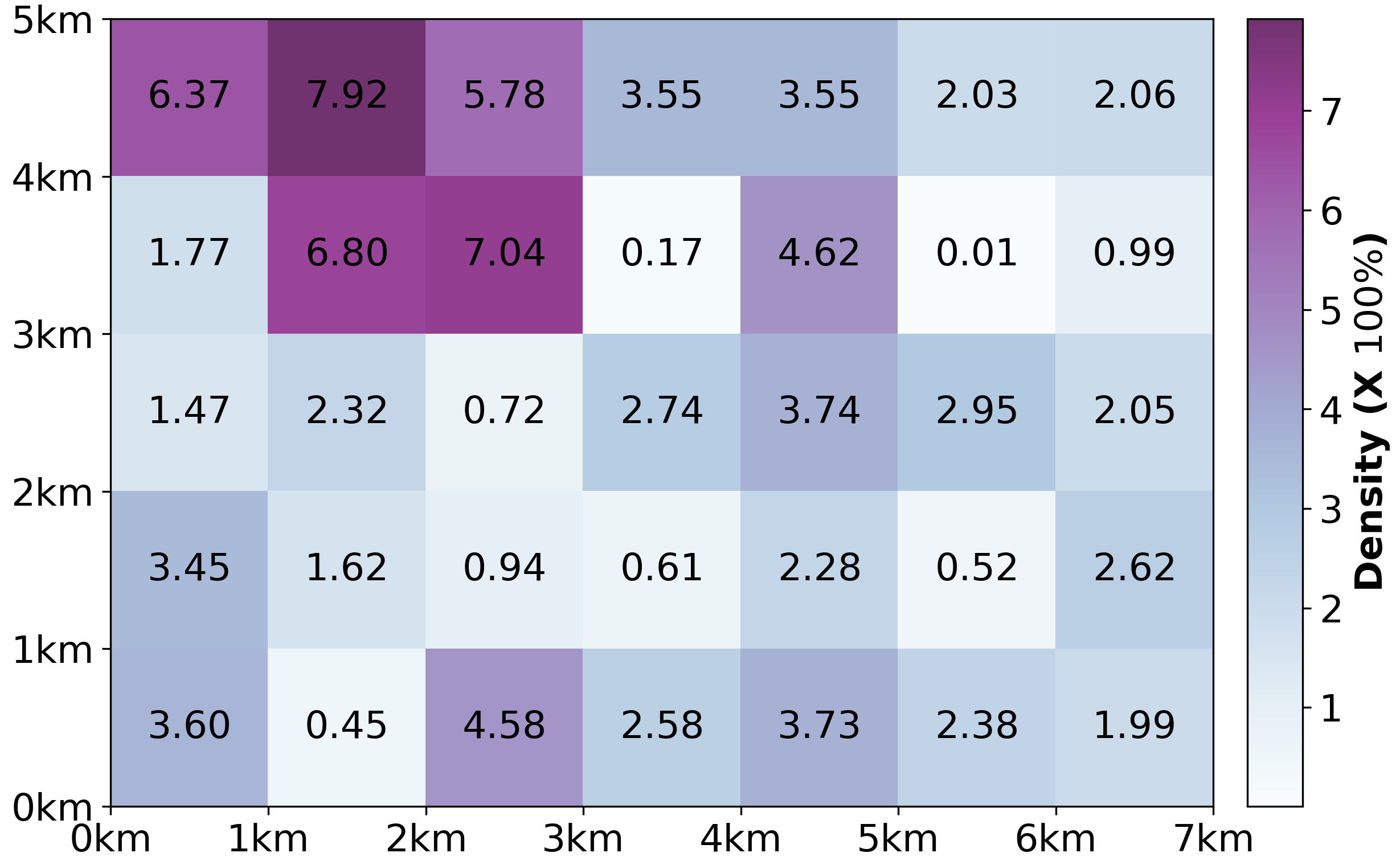}
        \label{fig:css_map35}
    }
    \vspace{0.1cm}
    \caption{Probability of each station being selected as the origin station for the three types of region.}
    \label{fig:css_maps}
\end{figure}

Pricing zones were obtained by partitioning the stations into subsets. In particular, the subdivision was performed in such a way to ensure that (i) the stations/units in each zone are geographically contiguous (i.e., the units within the zone are connected) and (ii) that potential demand is balanced (i.e., each zone covers approximately the same number of customers). 
Such partition was obtained by solving the MILP problem proposed by \citet{shirabe2009districting}. The formulation of this problem is reported in \aref{app:model_zone}. \Cref{fig:zone_division} reports the partition of the regions into four zones as an example.
\begin{figure}[t]
    \centering
    \subfloat[SMALL region with 4 zones.]{
        \includegraphics[width=0.33\linewidth]{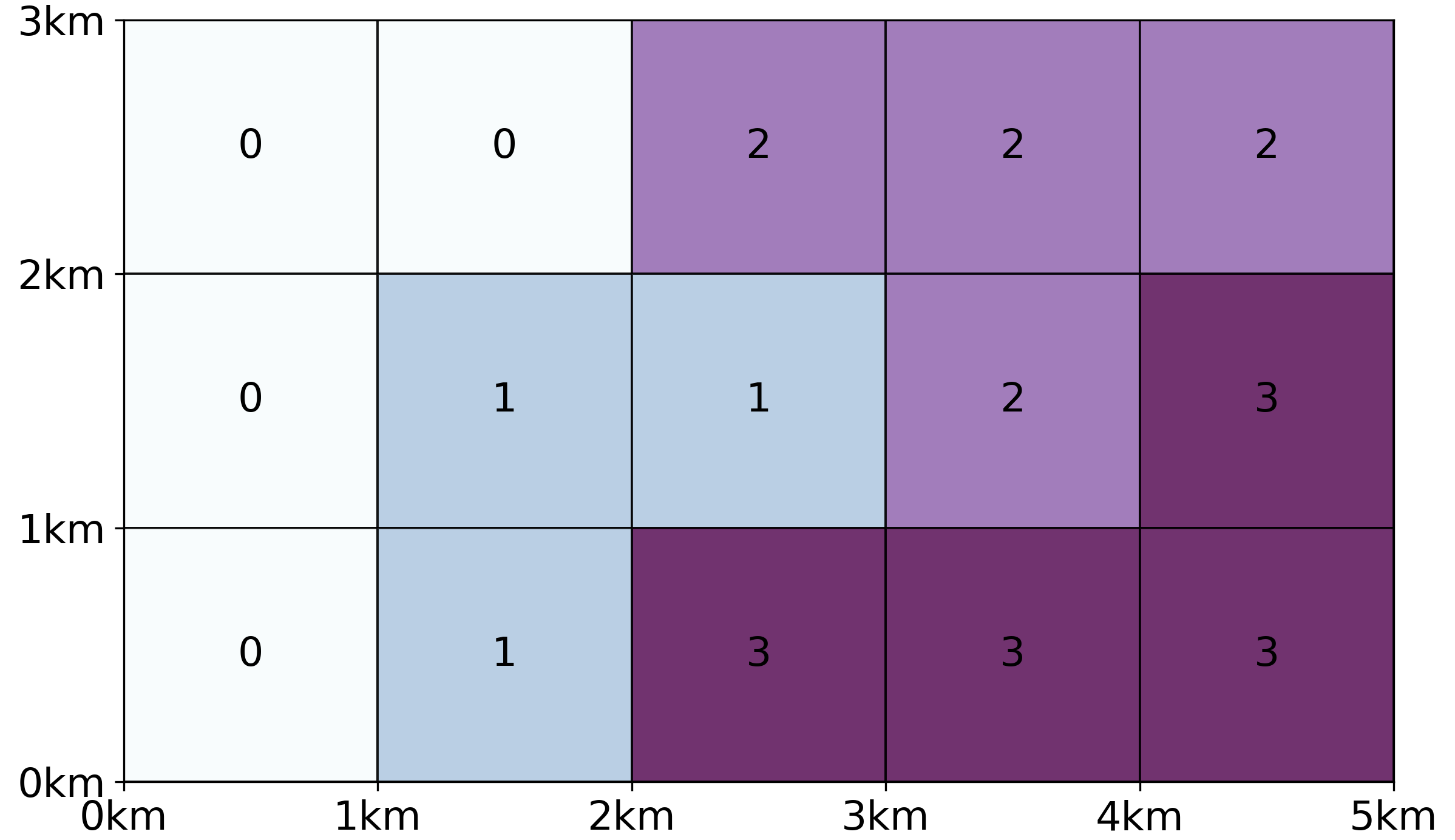}
        \label{fig:zone15}
    }
    \subfloat[MEDIUM region with 4 zones.]{
        \includegraphics[width=0.30\linewidth]{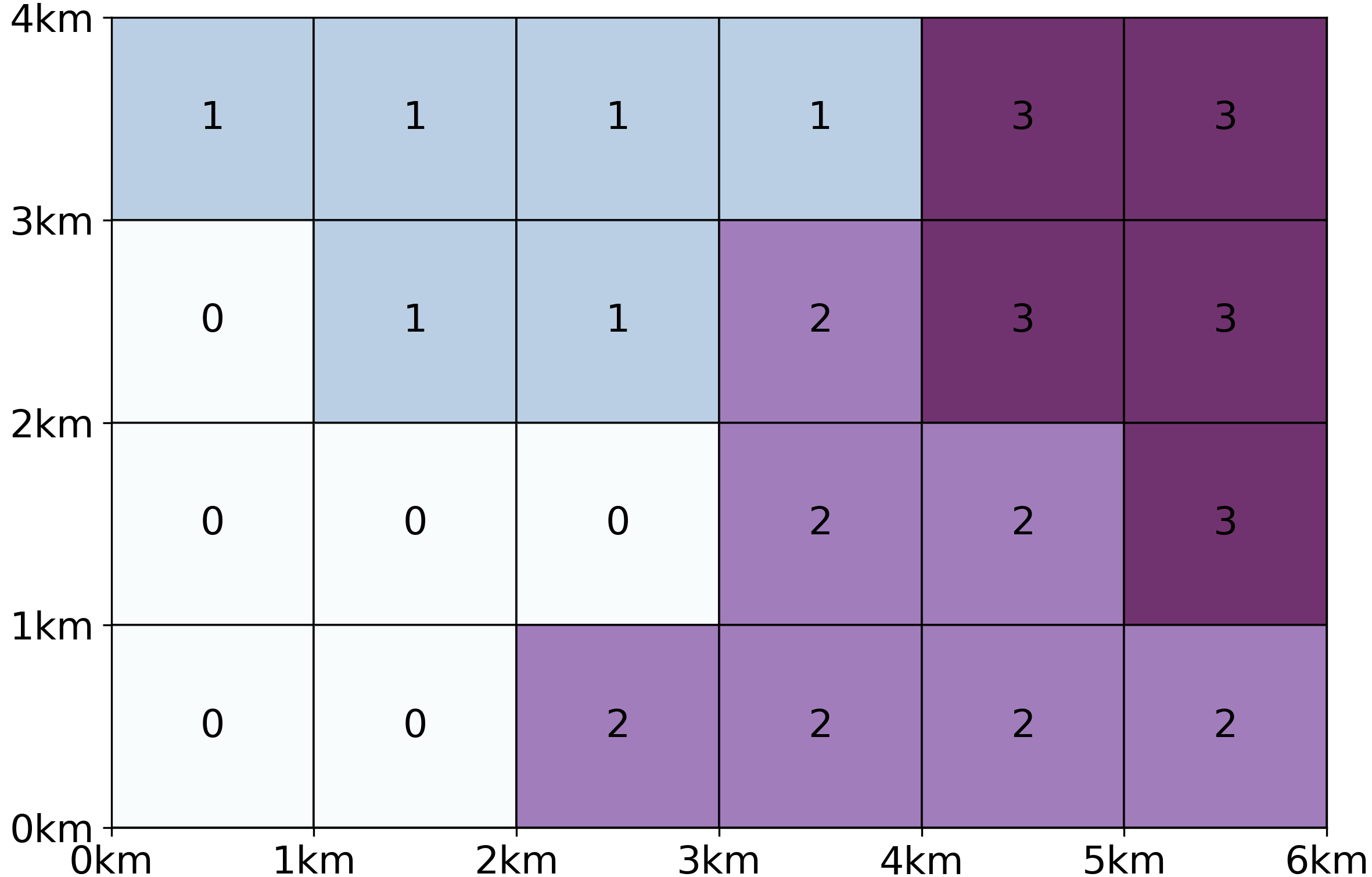}
        \label{fig:zone24}
    }
    \subfloat[LARGE region with 4 zones.]{
        \includegraphics[width=0.28\linewidth]{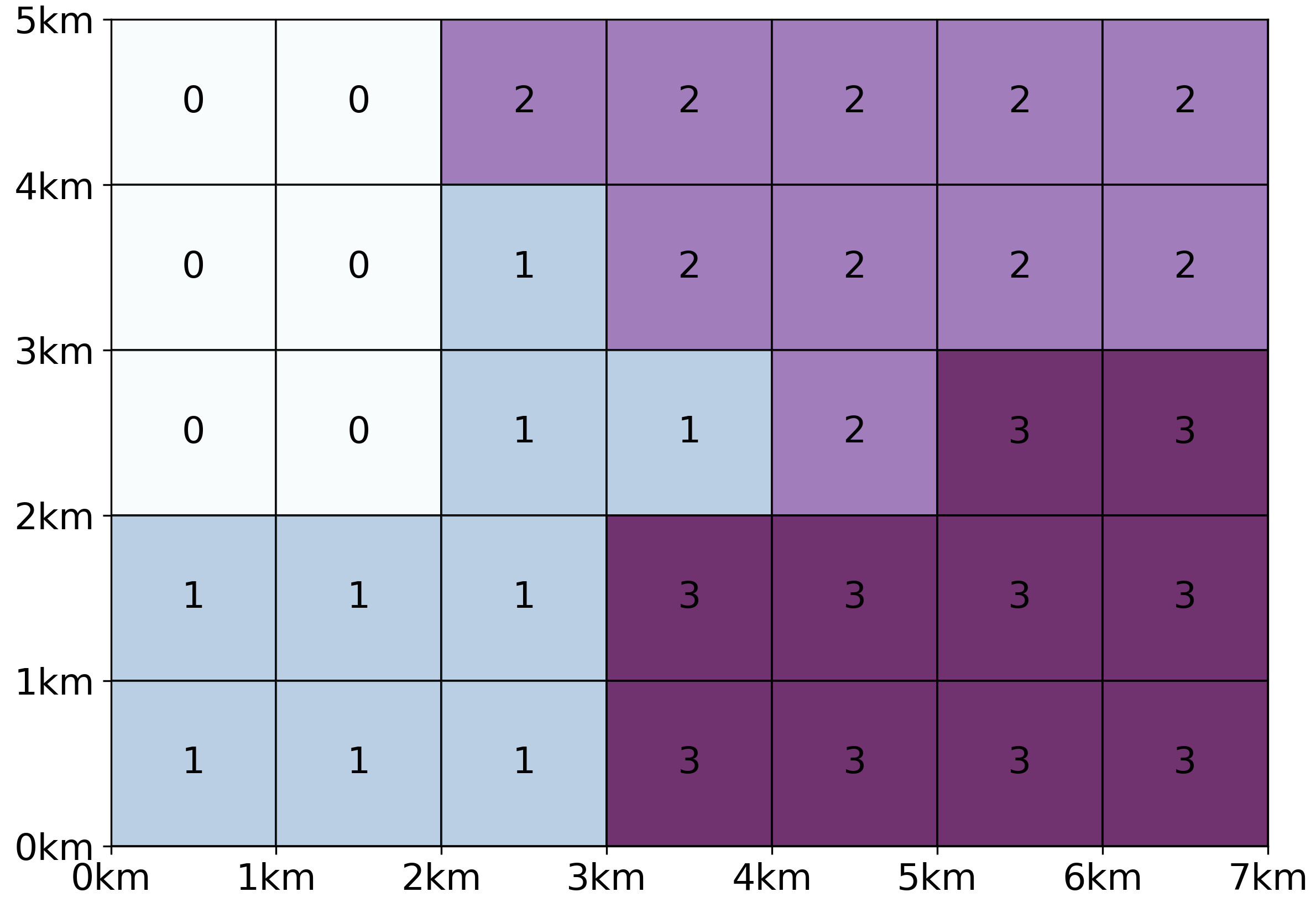}
        \label{fig:zone35}
    }
    \vspace{0.1cm}
    \caption{Partition of the service regions into contiguous pricing zones. The numbers within the units identify the zone number.}
    \label{fig:zone_division}
\end{figure}

We assumed a carsharing price made of a pick-up fee depending on the origin of the rental and a per-minute price common to all rentals. 
Decisions $x$ determine the pick-up fee level. In particular, the set $\set{L}$ contains the possible levels $P^l$, $l\in\set{L}$ the pick-up fee can take. Hence, decision $x_{z_1,z_2,l}=1$ enforces pick-up fee $P^l$ on rentals from zone $z_1$ to zone $z_2$. 
Since in our experiments the pick-up fee depends only on the origin of the trip, we enforced $x_{z_1,z_2,l}=x_{z_1,z_3,l}$ for all $z_1$ and $l$. This is equivalent to replacing variables $x_{z_1,z_2,l}$ by variables $x_{zl}$. 
Let $T_{ij}$ denote the travel time between stations $i$ and $j$ by means of a shared vehicle, and $P^{CS}$ the per-minute fee. The price of a carsharing trip between stations $i$ and $j$ at level $l$ is given by 
\begin{align}
\label{eq:price}
P_{ijl} = P^{CS} T_{ij} + P^l
\end{align}
In particular, in our experiments the variable pick-up fees could take values Euro $0, 1,2,3,4$, while the per-minute price was set to Euro $0.3$. 

The relocation cost comprises the expenses of staff salaries and fuel costs. 
Let $C^S$ denote the per-minute salary cost, $C^F$ the per-KM fuel cost, $i(v)\in \set{I}$ the initial location of each vehicle $v$ and $d_{ij}$ the distance between stations $i$ and $j$. The relocation cost is thus set as
$$C_{vi} = C^S T_{i(v), j} + C^F d_{i(v), j}, ~~\forall v \in \set{V}, j\in\set{I}$$
In particular we set $C^F=0.13$ Euro in line with current fuel prices in Denmark and $C^S=0.2$ Euro. 

Next, we describe how the decision-dependent demand distributions can be derived. In particular, we show how we obtained the probability of a customer $k$ choosing carsharing as a transport mode, given a price decision $x$ (see \Cref{sec:problem_describe} and, in particular, \eqref{eq:mu-omega}).
\color{black}We emphasize that, the choice model adopted is not meant to be the most representative model of actual human decisions, but to serve as a good proxy for that. We expect non-trivial generalizations of this model can be accommodated by our framework without compromising its applicability. 

We assumed the urban transport system offers a set of alternative transport modes $\set{M}=\{\text{walk}, \text{bike}, \text{public transit}\}$.
Given the options available to each customer, we modeled the probability of a customer choosing carsharing by means of a mixed-logit model, see, e.g., \cite{becker2017modeling, carrone2020understanding, rossetti2023commuter}.
In particular, for each customer $k$, we set a utility function $u_{k}(x,m)$ of the pricing decision and of the transport mode $m\in \set{M}\cup\{\text{cs}\}$ where $cs$ stands for carsharing. The utility function is based on the model calibrated by \citet{becker2017modeling}. The full specification of the utility function is provided in \aref{app:utility function}. In particular, for carsharing services the utility function reduces to a customer-dependent affine function of pricing decisions, while for other transport modes it reduces to a customer-dependent constant.  
Given such utility functions, the probability of customer $k$ choosing carsharing given a decision $x$ is given by \eqref{eq:choice_prob}.
\begin{align}
    P(\omega_k=1|x) & = \frac{e^{u_k(x,cs)}}{\sum_{m \in \mathcal{M}\cup\{cs\}} e^{u_k(x,m)}} \label{eq:choice_prob}
\end{align}
Based on these customer-specific probabilities, we can compute the probability mass function for the total demand on each arc, given a decision $x$, using \eqref{eq:mu-omega} and \eqref{form: prob}. Given that prices depend only on the origin station, the number of distributions becomes $|\set{D}|=|\set{L}|^{|\set{Z}|}$. 
In our experiments, we have $|\set{D}|=125$, $625$, and  $3125$ for instances with $3$, $4$, and $5$ zones, respectively. 
Since each distribution, though naturally discrete, contains a very large support, we approximated them by taking i.i.d. samples. In particular, we generated instance with $5$, $20$, $50$, and  $100$ samples. This entails that in our instances we worked with up to $312,500$ scenarios across all distributions.

\subsection{Performance of the algorithm}
\label{sec: compute_perform}
In this section, we report on the computational performance of the \LSt algorithm, compared to the solver Gurobi solving the extensive linearized formulations presented in \aref{app:compact_milp_form}.
When using the \LSt method, we apply valid inequalities \eqref{VI1}, \eqref{VI2_RPWPA}, \eqref{VI2_RPWoPA} and \eqref{VI3} statically to \eqref{RMP} as specified in \Cref{tab:VI_summary}. \color{black} For the RPWPA version we assess separately the application of cuts \eqref{form: LP_cuts_dual_initial} and \eqref{form: LP_cuts_tp_dual}. These valid inequalities are separated dynamically, i.e., upon reaching integer feasible solutions to \eqref{RMP} in the Branch-and-Bound tree that solves \eqref{RMP}. We name the \LSt method with the addition of \eqref{form: LP_cuts_dual_initial} as \LS v1 and the \LSt method with the addition of \eqref{form: LP_cuts_tp_dual} as \LS v2.
\color{black}

\color{black}
In \Cref{tab:solved_instances_rpwopa} and \Cref{tab:solved_instances_rpwpa} we report statistics on the performance of the \LSt and of the solver for all instances with RPWoPA and RPWPA, respectively. \color{black} The results are aggregated by the number of stations and zones. In particular, we report the following statistics: 
\begin{itemize}
    \item the number of instances solved to optimality within the time limit, denoted \# Solved Instances,
    \item the number of instances solved to an optimality gap smaller than $0.5\%$ within the time limit, denoted \# Solved Instances ($0.5\%$),
    \item the number of instances for which no feasible solution could be found within the time limit, denoted \# Failed Instances -- note that this statistics applies only to the solver, since the \LSt method found a feasible solution to all instances,
    \item the average solution time in seconds, denoted Avg. Solution Time,
    \item \color{black}the average optimality gap, denoted Avg. Gap. The optimality gap is computed as $\vert$\texttt{best\_bound}-\texttt{best\_objective\_value}$\vert$/$\vert$\texttt{best\_objective\_value}$\vert \times 100\%$. \color{black}
\end{itemize}
\color{black}
\begin{table}[!htbp]
  \centering
  \tiny
  \caption{Statistics on the computational performance of the \LSt method for solving the problem with RPWoPA.}
    \begin{tabular}{cccccccccc>{\color{black}}c}
    \toprule
    & & & \multicolumn{2}{c}{\# Solved Instances} & \multicolumn{2}{c}{\makecell{\# Solved Instances \\ ($0.5\%$)}} & \# Failed Instances & \multicolumn{2}{c}{\makecell{Avg. Solution Time (sec.)}} &  {Avg. Gap} \\
\cmidrule{4-11}    $\vert\set{I}\vert $     & $\vert \set{Z}\vert$     & $\vert \set{D}\vert$     & Solver & \LSt  & Solver & \LSt & Solver & Solver & \LSt &  {\LSt} \\
    \midrule
    15    & 3     & 125   & 25/36 & 36/36 & 25/36 & 36/36 & 0/36  & 1042.80 & 70.31 &  {0.00\%}\\
    24    & 3     & 125   & 15/36 & 32/36 & 15/36 & 32/36 & 14/36 & 1187.88 & 357.75 &  {0.07\%}\\
    35    & 3     & 125   & 10/36 & 16/36 & 10/36 & 30/36 & 18/36 & 1035.26 & 693.89 &  {0.85\%} \\
\cmidrule{4-11}    \multicolumn{3}{c}{\textbf{Zones =3 Avg.}} & \textbf{50/108} & \textbf{84/108} & \textbf{50/108} & \textbf{98/108} & \textbf{32/108} & \textbf{1088.65} & \textbf{373.98} &  {\textbf{0.31\%}} \\
    \midrule
    15    & 4     & 625   & 12/36 & 35/36 & 12/36 & 35/36 & 18/36 & 1227.34 & 266.04 &  {0.09\%} \\
    24    & 4     & 625   & 8/36  & 26/36 & 8/36  & 29/36 & 27/36 & 2077.73 & 795.70 &  {0.97\%} \\
    35    & 4     & 625   & 1/36  & 14/36 & 1/36  & 21/36 & 33/36 & 2895.55 & 1298.00 &  {2.35\%} \\
    \multicolumn{3}{c}{\textbf{Zones =4 Avg.}} & \textbf{21/108} & \textbf{75/108} & \textbf{21/108} & \textbf{85/108} & \textbf{78/108} & \textbf{2066.87} & \textbf{786.58} &  {\textbf{1.13\%}}\\
    \midrule
    15    & 5     & 3125  & 4/36  & 34/36 & 4/36  & 34/36 & 27/36 & 2066.91 & 480.09 &  {0.10\%} \\
    24    & 5     & 3125  & 0/36  & 17/36 & 0/36  & 21/36 & 36/36 & -     & 552.10 &  {1.92\%} \\
    35    & 5     & 3125  & 0/36  & 15/36 & 0/36  & 17/36 & 36/36 & -     & 845.59 &  {2.35\%} \\
\cmidrule{4-11}    \multicolumn{3}{c}{\textbf{Zones = 5 Avg.}} & \textbf{4/108} & \textbf{66/108} & \textbf{4/108} & \textbf{72/108} & \textbf{99/108} & \textbf{2066.91} & \textbf{625.93} &  {\textbf{1.45\%}}\\
    \midrule
    \multicolumn{3}{c}{\textbf{Avg.}} & \textbf{75/324} & \textbf{225/324} & \textbf{75/324} & \textbf{255/324} & \textbf{209/324} & \textbf{1647.64} & \textbf{595.50} &  {\textbf{0.96\%}}\\
    \bottomrule
\end{tabular}%
\label{tab:solved_instances_rpwopa}%
\end{table}%
\color{black} 

In \Cref{tab:solved_instances_rpwopa} we observe that, for the version RPWoPA, the \LSt solves significantly more instances to optimality compared to the solver. It also solves many more instances within a $0.5\%$ optimality gap. In addition, the \LSt algorithm delivers a feasible solution to all instances tested, while the solver fails to deliver a solution in approximately two thirds of the instances. The average solution time is also significantly lower for the \LSt algorithm (for the solver, the average solution time is computed over the instances for which a solution was found).  Finally, the number of instances solved to optimality or within a $0.5\%$ gap by the \LSt method decreases slightly with the size of the instances. This is expected, given the large number of distributions and scenarios involved. Nevertheless, the \LSt was able to deliver a feasible solution in all cases with average optimality gap of $0.96\%$. 

\begin{table}[htbp]
  \centering
  \tiny
  \caption{Statistics on the computational performance of \LSt algorithms (namely, \LS v1 and \LS v2) and the solver for solving the problem with RPWPA.}
    \begin{tabular}{ccccc>{\color{black}}ccc>{\color{black}}cccc>{\color{black}}c>{\color{black}}c>{\color{black}}c}
    \toprule
          &       &       & \multicolumn{3}{c}{\# Solved Instances} & \multicolumn{3}{c}{\# Solved Instance ($0.5\%$)} & \makecell{\# Failed \\ Instances}  & \multicolumn{3}{c}{Avg. Solution Time (s)} & \multicolumn{2}{c}{Avg. Gap} \\
\cmidrule{4-15}    $|\set{S}|$   &  $|\set{Z}|$     &  $|\set{D}|$   & Solver & \LS v1 & \LS v2 & Solver & \LS v1 & \LS v2 & Solver & Solver & \LS v1 & \LS v2 & \LS v1 & \LS v2 \\
    \midrule
    15    & 3     & 125   & 24/36 & 35/36 & 36/36 & 24/36 & 35/36 & 36/36 & 0/36  & 903.65 & 360.74 & 120.62 & 0.00\% & 0.00\% \\
    24    & 3     & 125   & 15/36 & 26/36 & 34/36 & 15/36 & 27/36 & 34/36 & 12/36 & 1278.23 & 971.15 & 628.03 & 1.65\% & 0.48\% \\
    35    & 3     & 125   & 10/36 & 11/36 & 30/36 & 10/36 & 16/36 & 30/36 & 18/36 & 1084.23 & 1131.92 & 883.11 & 4.68\% & 1.41\% \\
\cmidrule{4-15}    \multicolumn{3}{c}{\textbf{Zones =3 Avg.}} & \textbf{49/108} & \textbf{72/108} & \textbf{100/108} & \textbf{49/108} & \textbf{78/108} & \textbf{100/108} & \textbf{40/108} & \textbf{1088.70} & \textbf{821.27} & \textbf{543.92} & \textbf{2.11\%} & \textbf{0.63\%} \\
    \midrule
    15    & 4     & 625   & 13/36 & 32/36 & 36/36 & 13/36 & 32/36 & 36/36 & 12/36 & 1417.88 & 605.62 & 330.27 & 0.42\% & 0.00\% \\
    24    & 4     & 625   & 7/36  & 20/36 & 27/36 & 7/36  & 20/36 & 28/36 & 27/36 & 1846.68 & 919.94 & 792.38 & 2.86\% & 1.95\% \\
    35    & 4     & 625   & 1/36  & 9/36  & 23/36 & 1/36  & 9/36  & 23/36 & 34/36 & 2854.35 & 1367.42 & 991.25 & 6.68\% & 4.24\% \\
\cmidrule{4-15}    \multicolumn{3}{c}{\textbf{Zones =4 Avg.}} & \textbf{21/108} & \textbf{61/108} & \textbf{86/108} & \textbf{21/108} & \textbf{61/108} & \textbf{87/108} & \textbf{73/108} & \textbf{2039.64} & 964.33 & 704.63 & \textbf{3.32\%} & \textbf{2.06\%} \\
    \midrule
    15    & 5     & 3125  & 3/36  & 29/36 & 33/36 & 3/36  & 29/36 & 34/36 & 27/36 & 1880.71 & 803.44 & 588.55 & 0.88\% & 0.67\% \\
    24    & 5     & 3125  & 0/36  & 14/36 & 25/36 & 0/36  & 16/36 & 26/36 & 36/36 & -     & 923.18 & 1047.90 & 3.71\% & 2.95\% \\
    35    & 5     & 3125  & 0/36  & 6/36  & 16/36 & 0/36  & 6/36  & 16/36 & 36/36 & -     & 766.13 & 1142.14 & 8.06\% & 8.95\% \\
\cmidrule{4-15}    \multicolumn{3}{c}{\textbf{Zones = 5 Avg.}} & \textbf{3/108} & \textbf{49/108} & \textbf{74/108} & \textbf{3/108} & \textbf{51/108} & \textbf{74/108} & \textbf{99/108} & \textbf{1880.71} & \textbf{830.92} & \textbf{926.20} & \textbf{4.22\%} & \textbf{4.19\%} \\
    \multicolumn{3}{c}{\textbf{Avg.}} & \textbf{73/324} & \textbf{182/324} & \textbf{260/324} & \textbf{73/324} & \textbf{190/324} & \textbf{261/324} & \textbf{212/324} & \textbf{1609.39} & \textbf{840.08} & \textbf{1038.74} & \textbf{3.21\%} & \textbf{2.29\%} \\
    \bottomrule
    \end{tabular}%
  \label{tab:solved_instances_rpwpa}%
\end{table}%

In \Cref{tab:solved_instances_rpwpa}, for the version RPWPA, we also observe that both \LS v1 and \LS v2 solve significantly more instances than the solver. Moreover, we observe that the use of valid inequalities \eqref{form: LP_cuts_tp_dual} (\LS v2) improves convergence of the \LSt algorithm. That is, \LS v2 solves more instances to optimality, and significantly reduces the average optimality gap, compared to \LS v1. Recall that the difference between \LS v1 and \LS v2 is the set of valid inequalities added to \eqref{RMP}. In \LS v1 we add valid inequalities \eqref{form: LP_cuts_dual_initial} which require numerical solution of the LP relaxation to the the integer program $Q(x,s,\xi)$. In \LS v2 we use valid inequalities \eqref{form: LP_cuts_tp_dual}, which can be generated more efficiently from the analytical solution to the dual of \eqref{form: rpwopa_i}. 

Comparing \Cref{tab:solved_instances_rpwopa} and \Cref{tab:solved_instances_rpwpa}, we observe that the algorithm terminates with a larger optimality gap when solving instances with RPWPA. This behavior is expected and is due to the nature of the recourse problem, which is a linear program in the RPWoPA case and an integer program in the RPWPA case. The duality-based cuts for the RPWoPA case represent supporting hyperplanes at the epigraph of the expected revenue for each distribution. The same type of cuts provides only an upper bound in the RPWPA case, as its version of $Q(x,s,\xi)$ is non-convex .

In \aref{app:small_results}, for both versions of the problem, we report disaggregated statistics on the SMALL instances where the performance of the solver and of the \LSt method can be assessed on individual instances. Furthermore, we report the distribution of optimality gaps across all instances solved and for both \LS v1 and \LS v2. 
\color{black}

\section{A case study} \label{sec: case_study}
In this section, we investigate the impact of accounting for decision-dependent demand uncertainty in a close-to-real-world carsharing system described in \Cref{sec: case_describe}. 
\color{black} In \Cref{sec: case_solution_time} we assess the applicability of the proposed solution method to the case study in terms of computational performance. In \Cref{sec:VDDU} we assess the value of incorporating decision-dependent uncertainty. In \Cref{sec:system_perform} we compare the two different vehicle allocation policies considered using various performance metrics. Throughout this section, the model with RPWoPA is solved using \LSt with distribution-specific optimality cuts \eqref{form: LP_cuts_tp_dual}, and the model with RPWPA is solved using \LS v2. \color{black}

\subsection{Case description} \label{sec: case_describe}
We built real-world instances based on a carsharing service operating in Copenhagen, Denmark. 
The instances comprise the twenty stations illustrated in \Cref{fig:css_map}. These stations were divided into four pricing zones according to their zip code.
Potential customers were created based on the points of interest (POIs) located in an area with an $800$-meter radius centered around each station. POIs include locations such as schools, hospitals, shopping venues and museums. We assumed they represent the origins and destinations of the customers. The transport modes available in the city comprise public transport (a service offering buses, metro, and surface trains), walking, and bicycling, in addition to the focal carsharing service. 

\begin{figure}[htbp!]
    \centering
    \includegraphics[width=0.57\linewidth]{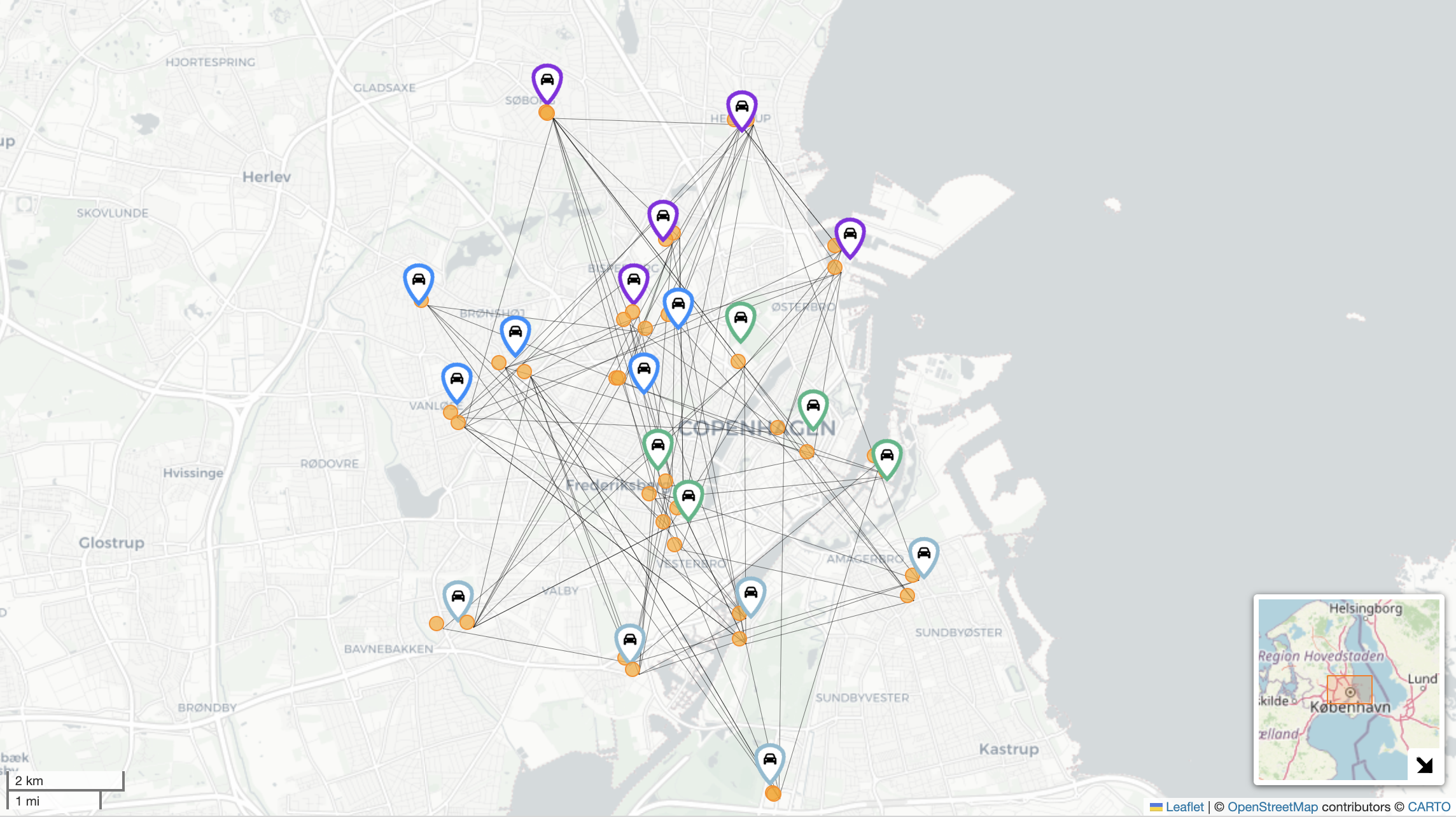}
    \caption{Geographical location of the $20$ stations considered in the case study. Colors indicate zones. Orange points represent POIs around the stations. Gray lines represent customer trips.}
    \label{fig:css_map}
\end{figure}

We generated a set of potential customer trips by randomly drawing without replacement two POIs as the origin and destination of each customer, see the gray lines in \Cref{fig:css_map}. 
For each trip and each transport mode in $\set{M} \cup \{cs\}$, we obtained trip information using Google Map APIs. This information includes the duration of bicycle rides $T_{k, b}$, access time to carsharing stations $T_{k, cs}^A$, in-vehicle time with carsharing $T_{k, cs}^V$, public transport time $T_{k,pt}^V$, and walking time $T_{k, walk}$. Each customer's travel behavior was simulated by the utility function presented in \aref{app:utility function} using real-world travel times. The values of the $\beta$ coefficients were taken from \Cref{tab: para_traveller}. The remaining parameters regarding prices and costs were set as explained in \Cref{sec: instance_gen}. 

\color{black}
Instances were generated with $100$ customers and varying fleet sizes. In particular, we used vehicle-to-customer (V/C, in the following) ratios $1:5$, $3:10$, $2:5$ and $1:2$. \color{black}
Each vehicle was initially assigned to a random station with equal probability. We approximated the demand distributions $d \in \set{D}$ with $20$ random samples per distribution.
\color{black}
Finally, we considered different customers price-sensitivity levels. We did this by increasing the initial $\beta_{cost}$ (i.e., the cost preference attribute in utility function) value by $10\%$ and $20\%$. In this way, we obtained $12$ instances in total. 
\color{black}

\subsection{Computational performance} \label{sec: case_solution_time}
The case-study instances were solved on the same platform described in \Cref{sec: computational study}. Also in this case, instances were solved to a target $10^{-4}$ optimality gap. However, in this case we did not set a time limit. This allowed us to always analyze optimal solutions.

\color{black}
\Cref{tab:compute_real} reports statistics on the performance of the \LSt method. 
The results are consistent with our observations in \Cref{sec: computational study}. 
Specifically, for the instances with RPWoPA, the \LSt method solved to optimality $11$ out of the $12$ instances within $1$ hour. The computational performance is sensitive to the V/C ratio. Lower ratios (particularly, $3:10$) determine longer solution time. 
Nevertheless, the average solution time is $2096.50$ seconds across all instances. This demonstrates sufficient scalability. 

When solving the instances with RPWPA, the performance of the \LSt method is nearly identical to that exhibited when solving the instances with RPWoPA. In this case, the method solved to optimality $10$ out the $12$ instances tested within $1$ hour. This substantiates good adaptability of the \LSt method to problems where the allocation of vehicles to customers forms a more constrained optimization problem. \color{black}

In summary, the performance of the \LSt method appears to be compatible with a real-world scenario where prices are updated, for example, every hour or less frequently. 

\begin{table}[htbp]
  \centering
  \footnotesize
  \caption{Computational performance of \LSt method under the RPWoPA and RPWPA formulations. The columns report the average solution time (Avg. Time), and the number of instances solved to provable optimality (\# Solved Instances) within $1$ hour.}
    \begin{tabular}{>{\color{black}}cc>{\color{black}}c>{\color{black}}c|>{\color{black}}c>{\color{black}}c}
    \toprule
    \multirow{2}[4]{*}{V/C ratio} & \multirow{2}[4]{*}{\# Scenarios} & \multicolumn{2}{c|}{RPWoPA} & \multicolumn{2}{c}{RPWPA} \\
\cmidrule{3-6}          &       &  Avg. Time (sec.)  & \# Solved Instances & Avg. Time (sec.) & \# Solved Instances \\
    \midrule
    1:5  & 12,520 & 2290.01 & 3/3 & 2444.66 & 3/3\\
    3:10 & 12,520 & 3083.75 & 2/3 & 4108.67 & 1/3 \\
    2:5  & 12,520 & 1906.18 & 3/3 & 2158.39 & 3/3 \\
    1:2  & 12,520 & 1106.05 & 3/3 & 1209.99 & 3/3 \\
\cmidrule{3-6}    \multicolumn{2}{c}{\textbf{Avg.}} & \textbf{2096.50} & \textbf{11/12} & \textbf{2480.43} & \textbf{10/12} \\
    \bottomrule
    \end{tabular}
  \label{tab:compute_real}%
\end{table}

\color{black} 
\subsection{Value of incorporating decision-dependent uncertainty} \label{sec:VDDU}
To evaluate the effectiveness of the proposed model incorporating decision-dependent uncertainty, we compare its solutions to those obtained when decision-dependent uncertainty is suppressed. 

In particular, we compare the proposed model to two benchmarks. The first benchmark preserves demand elasticity but suppresses the uncertainty. That is, the operator acknowledges that demand reacts to price changes. However, they assume that the demand response to prices is fully known, i.e., deterministic. 
The second benchmark preserves demand uncertainty, but suppresses the dependence of this to prices. That is, the operator accounts for the stochasticity of demand. However, it assumes that the demand distribution is fully known and independent of prices. This entails solving an ordinary stochastic program with decision-independent distribution. 

The models corresponding to the two benchmarks are constructed as follows.

\begin{enumerate}
    \item \textbf{\textit{Deterministic elastic demand benchmark}} \\ This benchmark is constructed by assuming deterministic price elasticity of demand. This is a common assumption in the carsharing pricing literature, see, e.g., \citet{jorge2015trip, wang2019pricing, xu2018electric}. In particular, we assume that demand is a deterministic function $\bar{\xi}(x):\{0,1\}^{\set{Z}\times \set{Z}\times \set{L}}\to \mathbb{N}^{\vert\set{A}\vert\times \vert\set{L}\vert}$ defined as
    $$\bar{\xi}_{ijl}(x) = \left \lceil \sum_{k|\omega_k = 1, i(k)=i, j(k)=j} P(\omega_k=1|x_{\zeta(i),\zeta(j), l}=1)- 0.5 \right \rceil, ~~\forall (i, j) \in \set{A}, \forall l \in \set{L}$$
    That is, for a given price level $l$, the expected demand associated with each arc $(i, j)$ is obtained by rounding the expected demand to the nearest integer.     
    Pricing decisions are then made using the following deterministic MILP
    \begin{align}
    \max \left \{-\sum_{v\in\set{V}} \sum_{i\in\set{I}}C_{vi} s_{vi} + Q(x, s, \bar{\xi}(x))|s.t.~\eqref{constr: price_set}-\eqref{constr: veh_relocation}, {(x,s)\in\{0,1\}^{|\set{Z}|\times|\set{Z}|\times|\set{L}|}\times \{0,1\}^{|\set{V}|\times|\set{I}|}} \right \}
    \tag{ELD-model}
    \label{form: dp}
    \end{align}
    Note that the relationship between $x$ and the probability distribution is hereby suppressed as $\bar{\xi}(x)$ is a singleton for all $x$. 
    \item \textbf{\textit{Ordinary stochastic program benchmark}} \\ 
    The second benchmark is set up by simply assuming exogenous (i.e., decision-independent) uncertain demand. In this case, the random demand follows a predefined distribution. In particular, the exogenous distribution is chosen as the distribution $\bar{d} \in \set{D}$ enforced by the average price level. 
    The corresponding ordinary stochastic program can be formulated as
    \begin{align}
    \max \left\{-\sum_{v\in\set{V}} \sum_{i\in\set{I}}C_{vi} s_{vi} + \mathbb{E}_{P_{\Bar{d}}}[Q(x, s, \xi)] | s.t.~\eqref{constr: price_set}-\eqref{constr: veh_relocation}, (x,s)\in\{0,1\}^{|\set{Z}|\times|\set{Z}|\times|\set{L}|}\times \{0,1\}^{|\set{V}|\times|\set{I}|} \right\}
    \tag{OSP-model}
    \label{form: osp}
\end{align}
\end{enumerate}

Let $(x^E, s^E)$ be an optimal solution to \eqref{form: dp}. To evaluate the performance of $(x^E, s^E)$, we need to evaluate it in the actual context where demand is uncertain and decision-dependent. To do so, we first identify the demand distribution enforced by solution $x^E$, denoted by $d^E$. The performance of the solution $(x^E, s^E)$ can be measured as the expected profit with respect to distribution $P_{d^E}$. \color{black} We denote this expected profit as the \textit{Expected Return of the Elastic Demand Solution} ($EELD$), which is computed as
$$EELD =  -\sum_{v\in\set{V}}\sum_{i\in\set{I}}C_{vi}{s}^E_{vi} + \mathbb{E}_{P_{d^E}}[Q(x^E, s^E, \xi)]$$
This value can now be compared to the expected profit delivered by problem \eqref{form:compact_rpwopa}, say $EDDU$, which accounts for decision-dependent uncertainty when making pricing and relocation decisions. 
We denote the profit increase 
$$\Delta_{EELD}=100\%\times \frac{EDDU-EELD}{EELD}$$
\color{black}
In a similar way, let $(x^{SP}, s^{SP})$ be an optimal solution to \eqref{form: osp} and let $P_{d^{SP}}$ be the distribution enforced by $x^{SP}$. The expected profit of solution $(x^{SP}, s^{SP})$ is denoted the \textit{Expected Return of the Stochastic Programming Solution} ($EOSP$), and is computed as 
$$EOSP =  -\sum_{v\in\set{V}}\sum_{i\in\set{I}}C_{vi}{s}^{SP}_{vi} + \mathbb{E}_{P_{d^{SP}}}[Q({x}^{SP}, {s}^{SP}, \xi)]$$
The profit increase obtained by accounting for decision-dependent uncertainty compared to $EOSP$ is obtained as
$$\Delta_{EOSP}=100\%\times \frac{EDDU-EOSP}{EOSP}$$

In \Cref{tab:profit_compare} we report $EDDU$, $EELS$ and $EOSP$ for the instances solved. The results demonstrate that accounting for decision-dependent demand uncertainty generates a significant increase in expected profits. In particular, accounting for decision-dependent stochastic demand increases profits by $8.39\%$, on average, compared to using decision-dependent deterministic demand (i.e., deterministic price-elastic demand) and by $8.53\%$, on average, compared to using decision-independent stochastic demand. The improvement is more pronounced when customers are more sensitive to prices. These results substantiate the effectiveness of accounting for decision-dependent demand uncertainty. 

\begin{table}[htbp]
  \centering
  \footnotesize
  \caption{Expected profits obtained by the proposed model (using RPWoRP), i.e., \eqref{form:compact_rpwopa}, and by the two benchmarks, \eqref{form: dp} and \eqref{form: osp}.}
   \begin{tabular}{cc>{\color{black}}c>{\color{black}}c>{\color{black}}c>{\color{black}}c>{\color{black}}c}
    \toprule
    Price sensitivity & V/C ratio & {$EDDU$ (Euro)} & $EELD$ (Euro) & $\Delta_{EELD}$ & $EOSP$ (Euro) & $\Delta_{EOSP}$\\
    \midrule
    \multirow{4}[2]{*}{100\%} & 1:5    & 122.71 & 112.04 & 9.52\% & 119.48 & 2.70\% \\
          & 3:10    & 162.11 & 150.00 & 8.07\% & 159.18 & 1.84\% \\
          & 2:5    & 198.15 & 191.08 & 3.70\% & 192.54 & 2.92\% \\
          & 1:2    & 216.75 & 202.82 & 6.87\% & 207.29 & 4.57\% \\
\cmidrule{3-7}    \multicolumn{2}{c}{\textbf{Avg.}} & \textbf{174.93} & \textbf{163.99} & \textbf{7.04\%} & \textbf{169.62} & \textbf{3.01\%} \\
    \midrule
    \multirow{4}[2]{*}{110\%} & 1:5    & 111.01 & 99.22 & 11.88\% & 102.51 & 8.29\% \\
          & 3:10    & 144.93 & 133.66 & 8.43\% & 134.97 & 7.38\% \\
          & 2:5    & 177.08 & 166.63 & 6.27\% & 164.82 & 7.44\% \\
          & 1:2    & 191.29 & 186.42 & 2.61\% & 169.10 & 13.12\% \\
\cmidrule{3-7}    \multicolumn{2}{c}{\textbf{Avg.}} & \textbf{156.07} & \textbf{146.48} & \textbf{7.30\%} & \textbf{142.85} & \textbf{9.06\%} \\
    \midrule
    \multirow{4}[2]{*}{120\%} & 1:5    & 101.27 & 86.37 & 17.25\% & 88.10 & 14.94\% \\
          & 3:10    & 128.07 & 111.58 & 14.78\% & 116.48 & 9.95\% \\
          & 2:5    & 156.51 & 143.10 & 9.37\% & 140.12 & 11.70\% \\
          & 1:2    & 170.15 & 166.98 & 1.90\% & 144.74 & 17.55\% \\
\cmidrule{3-7}    \multicolumn{2}{c}{\textbf{Avg.}} & \textbf{139.00} & \textbf{127.01} & \textbf{10.82\%} & \textbf{122.36} & \textbf{13.54\%} \\
    \midrule
    \multicolumn{2}{c}{\textbf{Total Avg.}} & \textbf{156.67} & \textbf{145.83} & \textbf{8.39\%} & \textbf{144.94} & \textbf{8.53\%} \\
    \bottomrule
    \end{tabular}%
  \label{tab:profit_compare}%
\end{table}

\subsection{Impact of vehicle allocation policy} \label{sec:system_perform}
In this section, we investigate the impact of different vehicle allocation policies in terms of  the following statistics.
\begin{itemize}
    \item \textit{relocation rate}, that is, the percentage of vehicles that are relocated to another station in the first decision stage. 
    \item \textit{expected service rate}, that is, the expectation of the ratio of served customers to materialized demand in the distribution enforced by the optimal decision. 
    \item \textit{average price}, that is, the average drop-off fee set by the operator. 
\end{itemize}
In \Cref{tab:avg_perform} we report the average value of these statistics for the two allocation policies.

The results show that adopting a proportional vehicle assignment policy necessarily decreases expected profit. This is due to the fact that the RPWPA is more constrained than the RPWoPA. However, the decrease in expected profits is marginal. At the same time, a proportional assignment improves expected service rates. The improvement in the expected service rate is significantly higher than the decrease in expected profits. In particular, the increase in service rate is more pronounced when the customers are more price-sensitive.
We also observe that the higher number of customers served is mainly enabled by more preventive relocations. In addition, the decrease in expected profits is mainly compensated by higher prices. 

These results demonstrate that additional control over the vehicle allocation scheme may improve levels of service with only a marginal impact on profits. However, higher levels of service appear to be driven mainly by additional relocations and higher prices.

\begin{table}[htbp]
  \centering
  \footnotesize
  \caption{Statistics aggregated by fleet size, price sensitivity and vehicle allocation policy. For the RPWPA we report the change with respect to the results obtained for the RPWoPA.}
    \begin{tabular}{cc>{\color{black}}c>{\color{black}}c>{\color{black}}c>{\color{black}}c|>{\color{black}}c>{\color{black}}c>{\color{black}}c>{\color{black}}c}
    \toprule
          &       & \multicolumn{4}{c|}{RPWoPA (Baseline)} & \multicolumn{4}{c}{RPWPA (Increase rate)} \\
\cmidrule{3-10}    \makecell{Price \\ sensitivity} & \makecell{V/C ratio} & \makecell{Exp. profit \\ (Euro)} & \makecell{Exp. \\ service rate} & \makecell{Relocation \\ rate} & \makecell{Avg. price \\ (Euro)} & \makecell{Exp. \\ profit} & \makecell{Exp. \\ service rate} & \makecell{Relocation \\ rate} & \makecell{Avg. \\price} \\
    \midrule
    \multirow{4}[2]{*}{100\%} & 1:5    & 122.71 & 49.08\% & 25.00\% & 3.75  & -0.08\% & 7.50\% & 40.00\% & 6.67\% \\
          & 3:10    & 162.11 & 68.68\% & 23.33\% & 3.75  & -0.10\% & -0.43\% & -14.29\% & 0.00\% \\
          & 2:5    & 198.15 & 80.64\% & 10.00\% & 3.50  & -0.01\% & 0.00\% & 0.00\% & 0.00\% \\
          & 1:2    & 216.75 & 87.68\% & 10.00\% & 3.25  & 0.00\% & 0.00\% & 0.00\% & 0.00\% \\
\cmidrule{2-10}    \multicolumn{2}{c}{\textbf{Avg.}} & \textbf{174.93} & \textbf{71.52\%} & \textbf{17.08\%} & \textbf{3.56} & \textbf{-0.05\%} & \textbf{1.77\%} & \textbf{6.43\%} & \textbf{1.67\%} \\
\midrule
    \multirow{4}[1]{*}{110\%} & 1:5    & 111.01 & 55.56\% & 30.00\% & 3.50  & 0.00\% & 0.00\% & 0.00\% & 0.00\% \\
          & 3:10    & 144.93 & 67.10\% & 20.00\% & 3.00  & -0.86\% & 3.02\% & 0.00\% & 8.33\% \\
          & 2:5    & 177.08 & 79.06\% & 10.00\% & 2.75  & -0.12\% & 3.18\% & 0.00\% & 9.09\% \\
          & 1:2    & 191.29 & 83.04\% & 4.00\% & 2.75  & -0.12\% & 5.25\% & 100.00\% & 9.09\% \\
\cmidrule{2-10}    \multicolumn{2}{c}{\textbf{Avg.}} & \textbf{156.07} & \textbf{71.19\%} & \textbf{16.00\%} & \textbf{3.00} & \textbf{-0.28\%} & \textbf{2.86\%} & \textbf{25.00\%} & \textbf{6.63\%} \\
    \midrule
    \multirow{4}[2]{*}{120\%} & 1:5    & 101.27 & 57.40\% & 20.00\% & 3.25  & 0.00\% & 0.00\% & 0.00\% & 0.00\% \\
          & 3:10    & 128.07 & 67.22\% & 13.33\% & 2.50  & -0.16\% & 11.55\% & 50.00\% & 20.00\% \\
          & 2:5    & 156.51 & 76.58\% & 7.50\% & 2.00  & -0.03\% & 0.00\% & 0.00\% & 0.00\% \\
          & 1:2    & 170.15 & 83.40\% & 6.00\% & 2.00  & 0.00\% & 1.57\% & 33.33\% & 0.00\% \\
\cmidrule{2-10}    \multicolumn{2}{c}{\textbf{Avg.}} & \textbf{139.00} & \textbf{71.15\%} & \textbf{11.71\%} & \textbf{2.44} & \textbf{-0.05\%} & \textbf{3.28\%} & \textbf{20.83\%} & \textbf{5.00\%} \\
    \bottomrule
    \end{tabular}%
  \label{tab:avg_perform}%
\end{table}%

\color{black}

\section{Concluding remarks} \label{sec: conclusion}
In this paper we studied the problem of deciding carsharing pricing and relocations. We advanced the available literature by proposing mathematical programming formulations that account for decision-dependent demand uncertainty. This allows decision makers to account for the impact of pricing decisions on the distribution of random demand. The problem is formulated as a two-stage stochastic program with decision-dependent uncertainty. The formulation is general and can be adapted to various specifications of the carsharing service. We illustrate two such specifications, one purely profit-driven, the second aiming to distribute vehicles proportionally to materialized demand at different arcs.

To solve the problem, we designed a tailored L-shaped method which incorporates distribution-specific cuts and devised a number of problem-specific improvements. Particularly, we show that for the two configurations of the system considered, we can solve subproblems in closed form, gaining significant efficiency. 

We performed numerical experiments on synthetic instances of size comparable to real-world cases. The computational results illustrate that the tailored L-shaped method outperforms by far an off-the-shelf solver employed to solve a monolithic linearized version of the model. Particularly, our method solved the majority of the instances to optimality within one hour, while the solver failed to deliver feasible solutions for the majority of the instances. 

Furthermore, the practical applicability of the proposed model is validated through a case study inspired by a carsharing system operated in Copenhagen, Denmark. \color{black} Our results illustrate that accounting for decision-dependent demand distributions generates significant increases in expected profits, compared to suppressing decision-dependent uncertainty. 
Furthermore, we show that additional control over vehicle allocation policies may improve service rates with only marginal effects on profits.

The results of our study open avenues for future research. 
In our problem we assume that the endogenous demand for carsharing depends only on price. In practice, demand may also be influenced by other factors, such as supply (in this case, the availability of shared vehicles). 
This extension would likely require only minor adjustments to the model provided and to the solution algorithm, in particular to the optimality cuts. However, this would require accurate models of the relationship between supply and demand. More in general, we expect that further insights can be gained by incorporating more advanced models of human preferences with respect to transportation services.
Furthermore, it is likely that decisions made for a given interval of time have an impact on decisions and demand realizations beyond the target interval of time. A multistage model would serve better to capture the sequential nature of the decision process applied in reality. However, this would likewise pose methodological challenges as it would require the solution to multistage stochastic programs with decision-dependent uncertainty. 
\color{black}
Finally, while this work was performed with a carsharing application in mind, we believe a similar modeling and solution strategy can be adopted to modeling and solving other transportation pricing and (re)location problems where the distribution of demand (or other random parameters) is influenced by decisions. Hence, we believe this work can help the transportation science community expand the richness of decision-making models and the range of tractable transportation problems solved. 
\color{black}

\ACKNOWLEDGMENT{This research is partially supported by the following research project 
\begin{itemize}
    \item ``Shared mobility: Towards sustainable urban transport'' (grant no. 1127-00176B) funded by Danmarks Frie Forskningsfond (DFF).
    \item ``ODD: Optimization Under Decision-Dependent Uncertainty'' (grant no. NNF24OC0089770) funded by the Novo Nordisk Foundation.
\end{itemize}
 This work has been performed using the Danish National Life Science Supercomputing Center, Computerome.
 The authors are also grateful to the organizers of the EURO Summer Institute 2024 for the opportunity to discuss preliminary versions of this work. 
 }

\bibliography{reference}

\newpage
\renewcommand{\theHsection}{A\arabic{section}}
\begin{APPENDICES}
\counterwithin*{equation}{section} 
\renewcommand\theequation{\thesection.\arabic{equation}} 
\setcounter{page}{1}
\color{black}
\section{A proportional vehicle allocation strategy} \label{sec: recourse problem wa}
In this section we introduce a variant of the recourse problem which models a policy that allocates vehicles to customers requests in a proportional manner, see, e.g., \cite{soppert2022differentiated}.
The allocation problem can be formulated as a MILP by extending model \eqref{form: rpwopa}.

\subsection{Problem formulation}
Let $\theta_{ij}$ denote the fraction of the demand of rentals from station $i$ directed to station $j$, that is
$$\theta_{ij}=\frac{\xi_{ij}}{\sum_{k \in \set{I}:k \not= i} \xi_{ik}}$$
In order to allocate vehicles proportionally to the demand, the recourse problem \eqref{form: rpwopa} requires the addition of the following constraints
\begin{align}
    r_{ijl} \leq \theta_{ij}\sum_{v \in \set{V}}s_{vi} + 1, \forall j\in \set{J}_i, \forall l \in \set{L}
    \label{constr: prop_assign}
\end{align}
The resulting recourse problem is therefore 
\begin{equation} Q(x, s, \xi) = \max_{r_{ijl} \in \mathbb{Z}_{\geq 0}} \left\{\sum_{(i,j)\in\set{A}}\sum_{l\in\set{L}} P_{ijl} r_{ijl}   \vert \eqref{constr_tp1},\eqref{constr_tp2},\eqref{constr: prop_assign}\label{eq:rpwpa}\right\}
\end{equation}
Constraints \eqref{constr: prop_assign} ensure that supply on arc $(i,j)$ is allocated proportionally to the demand. Observe that, since $r_{ij}$ is integer, the right-hand side $\theta_{ij}\sum_{v \in \set{V}}s_{vi} + 1$ ensures that demand can be satisfied up to $\lceil \theta_{ij}\sum_{v \in \set{V}}s_{vi} \rceil$ when the quota $\theta_{ij}\sum_{v\in\set{V}}s_{vi}$ is fractional. The right-hand side of constraints \eqref{constr: prop_assign} leaves the redundancy of allocating one more vehicle to destination $j$ when the proportional quota associated with arc $(i, j)$ is already an integer. This redundancy prevents the insufficient allocation of all available vehicles. Nevertheless, the number of allocated vehicles is still bounded by materialized demand as stated in constraints \eqref{constr_tp1}. The resulting recourse problem is a MILP problem.
\color{black}

\subsection{Closed-form solutions} \label{sec: rpwpa_primal_sol} 

\color{black} We observe that problem \eqref{eq:rpwpa} is separable by $i\in\set{I}$. That is, for each origin station $i\in\set{I}$, we define $Q^i(x,s,\xi)$ as
\begin{subequations}\label{form: rpwpa_i}
\begin{align}
Q^i(x, s, \xi) = \max & \sum_{j\in \set{J}_i} \sum_{l \in \mathcal{L}} P_{ijl} r_{ijl}   &  &\\
s.t. ~& r_{ijl} \leq \xi_{ij} x_{\zeta(i),\zeta(j),l}, &\forall j \in \set{J}_i, \forall l\in\set
{L} &~~~(\rho^A_{ijl}) \label{Constr: a}\\
&\sum_{(j,l)\in\set{J}_i\times \set{L}} r_{ijl} \leq \sum_{v \in \set{V}}s_{vi}, & &~~~(\rho^B_i) \label{Constr: b}\\
& r_{ijl} \leq \theta_{ij}\sum_{v \in \set{V}}s_{vi} + 1, &\forall j\in \set{J}_i, \forall l\in\set{L} &~~~(\rho^C_{ijl}) \label{Constr: c}\\
&r_{ijl} \in \mathbb{Z}_{\geq 0}, &\forall j \in \set{J}_i,\forall l\in\set{L} & &
\end{align}
\end{subequations}
where $\rho^A_{ijl}$, $\rho^B_i$ and $\rho^C_{ijl}$ are dual variables associated with the corresponding primal constraints. 
Problem \eqref{form: rpwpa_i} is a MILP. We now show that numerical solutions of $Q(x^k, s^k, \xi^{nd_k})$ are available in closed form. 
\color{black}

\Cref{prop: all_served} provides closed form solution to problem $Q^i(x^k,s^k,\xi^{nd_k})$ for the case when supply is larger than or equal to demand. Likewise, \Cref{prop: veh_lack case 1} addresses the case when supply is not sufficient to cover the entire demand. 
In the former case, the solution is found simply by allocating the entire supply. In the latter case demand is fulfilled in non-increasing order of the revenue.
\begin{proposition}
\label{prop: all_served}
    Let $(x,s)$ be a feasible solution to \eqref{RMP} and $\xi$ a realization of demand.
    Given an index $\hat{i}\in \set{I}$, assume that 
    $$\sum_{v\in\set{V}}s_{v\hat{i}} \geq \sum_{(\hat{i},j) \in \set{A}}\xi_{\hat{i}j}$$
    Then the optimal solution to $Q^{\hat{i}}(x, s, \xi)$, denoted $\Hat{r}:=(\Hat{r}_{jl})_{j \in \set{J}_{\hat{i}}, l\in\set{L}}$, is 
    $$\Hat{r}_{jl} = \xi_{\hat{i}j}x_{\zeta(i),\zeta(j),l} \qquad\forall j\in\set{J}_{\hat{i}}, \forall l\in\set{L}$$
\end{proposition}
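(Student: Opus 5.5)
The plan is to show two things: that $\hat r$ is feasible for $Q^{\hat i}(x,s,\xi)$, and that no feasible solution can do better. Optimality comes from an upper bound built out of constraints \eqref{Constr: a} alone. Throughout we use that $x$ is a feasible pricing decision, so $x\in\{0,1\}$ and, for each $j$, exactly one $l$ has $x_{\zeta(\hat i),\zeta(j),l}=1$. We also use the standing assumption of \Cref{prop:dual_solutions} that $P_{\hat i jl}\ge 0$.

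\textbf{Optimality.} Since $P_{\hat i jl}\ge 0$, every feasible $r$ satisfies
\[
\sum_{j,l}P_{\hat i jl}r_{jl}\le \sum_{j,l}P_{\hat i jl}\,\xi_{\hat i j}x_{\zeta(\hat i),\zeta(j),l}.
\]
The right-hand side is exactly the objective value of $\hat r$. So once $\hat r$ is shown feasible, it is optimal. This mirrors the first part of \Cref{prop: veh_lack 2}.

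\textbf{Feasibility.} Check the constraints one at a time.
\begin{enumerate}[(i.)]
\item Integrality and nonnegativity hold because $\xi_{\hat i j}\in\mathbb Z_{\ge0}$ and $x\in\{0,1\}$.
\item Constraint \eqref{Constr: a} holds with equality.
\item For \eqref{Constr: b}, sum over $(j,l)$ and use that exactly one $x_{\zeta(\hat i),\zeta(j),l}$ equals one for each $j$. This gives $\sum_{j,l}\hat r_{jl}=\sum_{j}\xi_{\hat i j}$, which is at most $\sum_v s_{v\hat i}$ by the hypothesis.
\item For \eqref{Constr: c}, let $S=\sum_v s_{v\hat i}$ and $\Xi=\sum_{k\neq\hat i}\xi_{\hat i k}$. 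If $x_{\zeta(\hat i),\zeta(j),l}=0$ then $\hat r_{jl}=0$, and the right-hand side is nonnegative. Otherwise we need $\xi_{\hat i j}\le \theta_{\hat i j}S+1$. When $\Xi>0$, the hypothesis $S\ge \Xi$ gives $\theta_{\hat i j}S=\xi_{\hat i j}S/\Xi\ge \xi_{\hat i j}$, so the inequality holds with slack at least $1$.
\end{enumerate}

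\textbf{Main obstacle: the degenerate case $\Xi=0$.} Here $\theta_{\hat i j}$ is not defined by its formula. However, all $\xi_{\hat i j}$ are then zero, so $\hat r\equiv 0$. This is feasible under any convention that keeps the right-hand side of \eqref{Constr: c} nonnegative, for instance $\theta:=0$, and it is trivially optimal. I would state this convention explicitly and dispose of the case first. The remaining argument then goes through without further difficulty.
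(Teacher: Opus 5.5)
Your proof is correct and follows the paper's route. The paper also combines \eqref{Constr: a} and \eqref{Constr: c} into the bound $r_{jl}\le\min\{\theta_{\hat i j}\sum_v s_{v\hat i}+1,\ \xi_{\hat i j}x_{\zeta(\hat i),\zeta(j),l}\}$, uses the supply hypothesis to show the minimum is the demand term, sets $\hat r$ equal to it and checks \eqref{Constr: b}. Your version also makes explicit two points the paper leaves implicit: saturating the bounds is optimal only because $P_{\hat i jl}\ge 0$, and the case $\sum_{k\neq\hat i}\xi_{\hat i k}=0$, where $\theta_{\hat i j}$ is undefined, needs separate handling.
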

\begin{proof}{Proof of \Cref{prop: all_served}}
\label{proof: all_served}
Observe that constraints \eqref{constr: prop_assign} and \eqref{constr_tp1} are equivalent to 
$$r_{jl} \leq \min\left\{\theta_{\hat{i}j} \sum_{v \in \set{V}}s_{v\hat{i}} + 1, \xi_{\hat{i}j} j\in\set{J}_{\hat{i}}, \forall l\in\set{L} \right\}$$
Then 
$$\sum_{v\in\set{V}}s_{v\hat{i}} \geq \sum_{(\hat{i},j) \in \set{A}}\xi_{\hat{i}j}$$
entails that the minimum is attained at $\xi_{\hat{i}j} x_{\zeta(i),\zeta(j),l}$.
Hence, we can set $\Hat{r}_{jl} = \xi_{\hat{i}j} x_{\zeta(i),\zeta(j),l}$ for all $j\in\set{J}_{\hat{i}}, \forall l\in\set{L}$. This solution will satisfy constraint \eqref{constr_tp2} inasmuch as 
$$\sum_{(\hat{i},j)\in\set{A}}\xi_{\hat{i}j} \leq \sum_{v\in\set{V}}s_{v\hat{i}}$$    \Halmos
\end{proof}

\begin{proposition} \label{prop: veh_lack case 1}
Let $(x, s)$ be a feasible solution to \eqref{RMP} and $\xi$ a realization of demand. Given a station $\hat{i} \in \set{I}$ assume that 
$$\sum_{v\in\set{V}}s_{v\hat{i}} < \sum_{(\hat{i},j) \in \set{A}}\xi_{\hat{i}j}$$
Let $\hat{P}_{jl}= P_{jl} x_{\zeta(\hat{i}), \zeta(j), l}, \forall j \in \set{J}_{\hat{i}}, \forall l \in \set{L}$. Recall that only one $l\in\set{L}$ leads to $x_{\zeta(\hat{i}), \zeta(j), l}=1, \forall (\hat{i}, j)\in\set{A}$, therefore the number of $\hat{P}_{jl}$ taking a value larger than $0$ can be at most $|\set{J}_i|$.  
Sort the values $\hat{P}_{jl}$ such that, for $(j,l) \in \{2, 3, \cdots, |\set{J}_{\hat{i}}|-1\} \times \set{L}$ 
$$\hat{P}_{(j,l)^{(k-1)}} \geq \hat{P}_{(j,l)^{(k)}} \geq \hat{P}_{(j,l)^{(k+1)}}$$
For a given $(j,l)^k$, denote the corresponding $j$ and $l$ as $j^{(k)}$, $l^{(k)}$, that is $(j,l)^k = (j^{(k)}, l^{(k)})$
Identify $2 \leq \hat{k} \leq |\set{J}_i|$ such that 
$$\sum_{k=1}^{\hat{k}-1} \min\{\xi_{\hat{i}, j^{(k)}}, \lfloor \theta_{\hat{i}, j^{(k)}} \sum_{v\in\set{V}}s_{v\hat{i}}+1 \rfloor \} \leq \sum_{v\in\set{V}}s_{v\hat{i}}$$ 
and 
$$\sum_{k=1}^{\hat{k}} \min\{\xi_{\hat{i}, j^{(k)}}, \lfloor \theta_{\hat{i}, j^{(k)}} \sum_{v\in\set{V}}s_{v\hat{i}}+1 \rfloor \} > \sum_{v\in\set{V}}s_{v\hat{i}}$$ 
Then, the optimal solution to $Q^{\hat{i}}(x, s, \xi)$, denoted $\Hat{r}:=(\Hat{r}_{j})_{j \in \set{J}_{\hat{i}}}$, is 
\begin{equation}
    \hat{r}_{jl} = \begin{cases}
\min \left\{\xi_{\hat{i},j}, \lfloor\theta_{\hat{i},j}\sum_{v\in\set{V}}s_{v\hat{i}}+1\rfloor \right\} & \quad \forall (j,l) \in \{(j,l)^{(1)}, (j,l)^{(2)}, \cdots, (j,l)^{(\hat{k}-1)} \}, \\[1ex]
\sum_{v\in\set{V}}s_{v\hat{i}}-\sum_{k=1}^{\hat{k}-1}\min\{\xi_{\hat{i},j^{(k)}}, \lfloor\theta_{\hat{i},j^{(k)}}\sum_{v\in\set{V}}s_{v\hat{i}}+1\rfloor\} & \quad (j, l) = (j,l)^{(\hat{k})}, \\[1ex]
0 & \quad \forall (j,l) \in \set{J}_{\hat{i}} \times \set{L} \setminus \{(j,l)^{(1)}, (j,l)^{(2)}, \cdots, (j,l)^{(\hat{k})} \}.
\end{cases}
\end{equation}
\end{proposition}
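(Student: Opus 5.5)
Because $x$ is feasible, exactly one price level is active for each destination $j$; the variables with $x_{\zeta(\hat{i}),\zeta(j),l}=0$ are forced to zero by \eqref{Constr: a}. So I would first reduce $Q^{\hat{i}}(x,s,\xi)$ to a problem in $|\set{J}_{\hat{i}}|$ integer variables $r_j:=r_{j,l(j)}$, where $l(j)$ is the active level. Write $S:=\sum_{v\in\set{V}}s_{v\hat{i}}$. Because $r_j$ is integer, \eqref{Constr: a} and \eqref{Constr: c} together are equivalent to $0\le r_j\le u_j:=\min\{\xi_{\hat{i}j},\lfloor\theta_{\hat{i}j}S+1\rfloor\}$, with $u_j$ a nonnegative integer. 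The reduced problem is therefore
\[
\max\Big\{\sum_j \hat{P}_{j} r_j : 0\le r_j\le u_j,\ \sum_j r_j\le S,\ r_j\in\Z\Big\},
\]
a bounded (integer) knapsack with unit weights, i.e.\ a continuous-knapsack structure.

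With this reduction, the argument parallels Proposition~\ref{prop: veh_lack 2}, with $\xi_{\hat{i}j}$ replaced by the capacities $u_j$.
\begin{enumerate}[(i.)]
\item The constraint matrix is again an identity matrix stacked on a row of ones, so it is totally unimodular and the right-hand side $(u,S)$ is integral. Hence, by the argument of Proposition~\ref{prop:rpwopa_tu}, the LP relaxation has an integral optimum, and it suffices to solve the LP.
\item For the LP, a standard exchange argument shows greedy filling in non-increasing order of $\hat{P}_j$ is optimal. Suppose an optimal $r$ has $r_{j^{(a)}}<u_{j^{(a)}}$ while $r_{j^{(b)}}>0$ for some $b>a$. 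Shifting one unit from $j^{(b)}$ to $j^{(a)}$ keeps feasibility and changes the objective by $\hat{P}_{j^{(a)}}-\hat{P}_{j^{(b)}}\ge 0$.
\item Alternatively, one can certify optimality by exhibiting dual multipliers as in Proposition~\ref{prop:dual_solutions}: take $\beta=\hat{P}_{j^{(\hat{k})}}$ and $\alpha_j=\max\{0,\hat{P}_j-\beta\}$ on the capacity constraints, then check complementary slackness.
\end{enumerate}

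It remains to check that the greedy solution coincides with the stated formula. The items $j^{(1)},\dots,j^{(\hat{k}-1)}$ are filled to $u_j$, which is feasible by the first defining inequality of $\hat{k}$. Item $j^{(\hat{k})}$ receives the residual $S-\sum_{k<\hat{k}}u_{j^{(k)}}$. By the second inequality this residual is strictly smaller than $u_{j^{(\hat{k})}}$, and it is nonnegative and integral, so it is feasible. All remaining items receive zero, and the supply constraint is tight.

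The step I expect to need the most care is the existence of $\hat{k}$. The hypothesis $S<\sum_j\xi_{\hat{i}j}$ does not by itself ensure $\sum_j u_j>S$, because the proportional caps may bind. However, since $\sum_j\theta_{\hat{i}j}=1$, we have $\sum_j\lfloor\theta_{\hat{i}j}S+1\rfloor>\sum_j\theta_{\hat{i}j}S=S$. If every $u_j$ were attained by the $\xi$ term, then $\sum_j u_j=\sum_j\xi_{\hat{i}j}>S$ by hypothesis. Mixed cases need a short argument, or else a remark that if $\sum_j u_j\le S$ the solution $r_j=u_j$ is optimal; this degenerate case is consistent with the formula if one takes $\hat{k}=|\set{J}_{\hat{i}}|+1$. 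The boundary case $\hat{k}=1$, meaning even the first item exceeds $S$, must also be allowed for the statement to be complete.
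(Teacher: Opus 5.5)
Your proposal is correct and follows the paper's route. The paper omits the proof and simply points to the greedy, fill-in-non-increasing-price argument of \Cref{prop: veh_lack 2}, which you make rigorous via the integer caps $u_j$. One caveat: your exchange step only yields a prefix-filled optimum, so concluding that the supply $S$ is exhausted tacitly uses $P_{ijl}\geq 0$, as does $\beta\geq 0$ in your dual certificate; this is the assumption stated in \Cref{prop:dual_solutions}.

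The existence of $\hat{k}$ that you left open closes in one line. Since $S<\sum_{j'\in\set{J}_{\hat{i}}}\xi_{\hat{i}j'}$, every $j$ satisfies $\theta_{\hat{i}j}S\leq\xi_{\hat{i}j}$, strictly whenever $\xi_{\hat{i}j}>0$, and also $\lfloor\theta_{\hat{i}j}S+1\rfloor>\theta_{\hat{i}j}S$. Hence $u_j\geq\theta_{\hat{i}j}S$ for all $j$, with strict inequality for at least one $j$, so $\sum_j u_j>\sum_j\theta_{\hat{i}j}S=S$ and the degenerate case never arises. By contrast, $\hat{k}=1$ genuinely can occur (e.g.\ $S=0$ with $\xi_{\hat{i}j^{(1)}}>0$), so you are right that the restriction $\hat{k}\geq 2$ in the statement should be dropped.
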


\begin{proof}{Proof of \Cref{prop: veh_lack case 1}}
The proof is omitted here as it is similar to that of the second part of \Cref{prop: veh_lack 2}.
\end{proof}

\section{Deterministic equivalent MILP formulations} \label{app:compact_milp_form}
In this appendix, we present the compact MILP formulations of problem \eqref{form: general_form} with recourse problem \eqref{form: rpwopa}. \color{black}For constructing compact formulations, we first define a continuous variable $\rho_d$ denoting the revenue obtained under distribution $d \in \set{D}$, and binary variable $\delta_d$ denoting whether distribution $d$ is enforced or not. 

The compact MILP formulation for problem \eqref{form: general_form} with RPWoPA is given as follows.
\begin{subequations}
    \label{form:compact_rpwopa}
    \begin{align}
        \max &-\sum_{v\in\set{V}}\sum_{i\in\set{I}}C_{vi}s_{vi} + \sum_{d \in \set{D}}\rho_d & \\
        s.t. ~& \rho_d \leq \sum_{n=1}^{N_d} \pi^{nd}\sum_{i \in \set{I}} \left [\sum_{(j, l) \in \set{J}_i \times \set{L}} \left(P^{CS}T_{ij} +  P^l \right) r^{nd}_{ijl}  \right] & \forall d \in \set{D} \label{constr_b1} \\
        & \rho_d \leq \delta_d U_d & \forall d \in \set{D} \label{constr_b2}\\
        & \sum_{d \in \set{D}} \delta_d = 1 &  \label{constr_b3}\\
        & \sum_{z_1 \in \set{Z}} \sum_{z_2 \in \set{Z}} x_{z_1, z_2, l(z_1,z_2, d)} \geq 2 |\set{Z}| \delta_d & \forall d \in \set{D}\\
        & \sum_{l \in \set{L}} x_{z_1, z_2, l} = 1 & \forall z_1, z_2 \in \set{Z} \\
        & \sum_{i \in \set{I}} s_{vi} = 1 & \forall v \in \set{V}\\ 
        & r_{ijl}^{nd} \leq \xi^{nd}_{ij} x_{\zeta(i), \zeta(j), l} &\forall i\in \set{I}, \forall (j, l) \in \set{J}_i \times \set{L}, \forall d \in \set{D}, n=1,\cdots,N_d \label{2nd_constr_start}\\
        &\sum_{(j, l) \in \set{J}_i \times \set{L}} r_{ijl}^{nd} \leq \sum_{v \in \set{V}}s_{vi} &\forall i \in \set{I},\forall d \in \set{D},n=1,\cdots,N_d \\
        &r_{ijl}^{nd} \leq \sum_{v \in \set{V}}s_{vi} & \forall i \in \set{I}, \forall d \in \set{D}, n=1,\cdots,N_d \label{2nd_constr_end}\\
        &r_{ijl}^{nd} \in \mathbb{Z}_{\geq 0}, &\forall i \in \set{I}, \forall (j, l) \in \set{J}_i \times \set{L}, \forall d \in \set{D}, n=1,\cdots,N_d \label{2nd_var}\\
        & x_{z_1, z_2, l} \in \{0,1\} &\forall z_1, z_2 \in \set{Z}, \forall l \in \set{L} \\
        & \rho_d \in \mathbb{R} & \forall d \in \set{D}\\
        & \delta_d \in \{0, 1\} & \forall d \in \set{D}
     \end{align}
\end{subequations}
Problem \eqref{form:compact_rpwopa} is constructed by adding constraints \eqref{constr_b1} - \eqref{constr_b3} to the original problem \eqref{form: general_form}. Specifically, constraints \eqref{constr_b1} state the expected revenue upper bound that can be obtained under each distribution $d \in \set{D}$. Constraints \eqref{constr_b2} force $\rho_d$ to take the value of $0$ when the corresponding distribution $d$ is not indicated by current price $x$, and constraint \eqref{constr_b3} ensure that only one distribution can be activated. The decision variables as well as constraints in recourse problem \eqref{form: rpwopa} are expanded by taking a distribution index $d\in\set{D}$ and a scenario index $n$ as expressed in variable domains \eqref{2nd_var} and constraints \eqref{2nd_constr_start}-\eqref{2nd_constr_end}.

\section{A flow-based MILP formulation for zone division} \label{app:model_zone}
We report the MILP model for the partition of the carsharing stations into zones. The model was proposed in \citet{shirabe2009districting}. Let $\hat{\set{G}}=(\set{I}, \set{E})$ represent the one-way carsharing system network, where $\set{I}$ represents set of nodes composed by stations, and $\set{E}$ denotes the edge set containing bi-directional arcs $(i, j)$ and $(j, i)$ for each pair of nodes. We further let $\set{I}^-(i):\{(j,i)\in\set{E}\}$ and $\set{I}^+(i):\{(i,j)\in\set{E}\}$.
Let $\theta_i$ denote the demand density of station $i \in \set{I}$ (i.e., the percentage of customers available in the unit containing station $i$). Let $\overline{\theta}$ denote the target demand density within each zone, which is calculated by $\sum_{i \in \set{I}} \theta_i /|\set{Z}|$. Let $d_{ij}$ denote the distance between station $i$ and $j$. 
We define the imbalance level of the demand distribution, denoted $\beta$, as the largest deviation of the demand density from the target density $\Bar{\theta}$.
The $\beta$ value is set to $10\%$ in our experiments. 
We introduce binary variable $y_{ij}$, which takes value $1$ if station $i \in \set{I}$ is assigned to the zone (centered at) station $j \in \set{I}$, $0$ otherwise. A set of flow variable $f_{ik}^j$ is introduced to indicate the amount of flow originating at zone $j$ that is sent across link $(i, k)$. The flow-based MILP formulation for zone division is formulated as below.
\begin{subequations}
 \begin{align}
    \min &\sum_{i \in \set{I}} \sum_{j \in \set{I}} d_{ij}^2 y_{ij} & \label{zone_divison_obj}\\
    s.t. ~ & \sum_{j \in \set{I}} y_{ij} = 1 & \forall i \in \set{I} \label{zd_c1}\\
    & \sum_{j \in \set{I}} y_{jj} = \vert \set{Z} \vert & \label{zd_c2}\\
    &(1-\beta) \Bar{\theta} y_{jj} \leq \sum_{i \in \set{I}}\theta_i y_{ij} \leq (1+\beta) \Bar{\theta} y_{jj} & \forall j \in \set{I} \label{zd_c3}\\
    &y_{ij} \leq y_{jj} & \forall i \in \set{I}, \forall j \in \set{I} \label{zd_c4}\\
    & \sum_{k \in \set{I}^-(i)} f^j_{ki} - \sum_{k \in \set{I}^+(i)} f^j_{ik} = y_{ij}, & \forall i \in \set{I}\setminus\{j\}, \forall j \in \set{I} \label{zd_c5}\\
    & \sum_{k \in \set{I}^-(i)} f^j_{ki} \leq (|\set{I}| - 1) y_{ij}, & \forall i \in \set{I}\setminus\{j\}, \forall j \in \set{I} \label{zd_c6}\\
    & \sum_{k \in \set{I}^-(j)} f^j_{kj} = 0, & \forall j \in \set{I} \label{zd_c7}\\
    & f^j_{ik} \geq 0, &\forall (i,k) \in \set{E}, \forall j \in \set{I} \label{zd_c8}\\
    & y_{ij} \in \{0,1\}, & \forall i \in \set{I}, \forall j \in \set{I} \label{zd_c9}
\end{align}
\label{form:zone_division}
\end{subequations}
In model \eqref{form:zone_division} we minimize the sum of intra-zone squared Euclidean distances \eqref{zone_divison_obj}. Constraints \eqref{zd_c1} and \eqref{zd_c2} ensure that each station is assigned to one zone and that exactly $|\set{Z}|$ zones are created, respectively. Constraints \eqref{zd_c3} make sure that the cumulative demand density of a certain zone $j$ is within the allowed imbalance level $\beta$ from the target demand density (i.e., $\overline{\theta}$). 
Constraints \eqref{zd_c4} ensure that station $i$ can be assigned to the zone centered at station $j$ only if station $j$ is designated as the center of a zone. Constraints \eqref{zd_c5} ensure that node $i$ consumes one unit flow of type $j$ if $i$ is assigned to zone $j$, and consumes $0$, otherwise. Constraints \eqref{zd_c6} ensure that node $i$ can receive flow of type $j$ only if it is assigned to zone $j$, while constraints \eqref{zd_c7} prevent flow circulation. Constraints \eqref{zd_c8} and \eqref{zd_c9} define the domain of decision variables.

The above MILP formulation is solved using Gurobi Solver 11.0.1 in our instance generation process. Even with the largest of our instances, model \eqref{form:zone_division} can be solved efficiently within seconds.

\section{Utility function used to model customer mode choices}
\label{app:utility function}
In this appendix, we provide the explicit expression of the utility function based on \citet{becker2017modeling}. 
The utility function is a function of travel time, access time (i.e., the time necessary to reach the nearest offer of a specific transport mode), and travel cost. These attributes are listed in \Cref{tab: para_traveller} together with their value. 
The utility function is specified, for all $m \in \set{M} \cup \{cs\}$ and $k=1,2,\cdots,K$ as follows
\begin{align}
\label{eq:utility_expression}
        u_{k}(x,m) = 
        \begin{cases}
        T_{k, b}\beta_{b} + \epsilon_{km}& \text{ if } m=\text{bike}\\
        T_{k, pt}^V \beta_{pt}^V +T_{k, pt}^W \beta_{pt}^W  + T_{k, pt}^A \beta_{pt}^A + C_{km} \beta_{cost} + \epsilon_{km}& \text{ if } m=\text{public transit}\\
        T_{k, walk} \beta_{walk} + \epsilon_{km}&\text{ if } m=\text{walk}\\
        T_{k,cs}^{V} \beta^V_{cs} + T^A_{k, cs} \beta^A_{cs} + \sum_{l \in \set{L}}P_{i(k),j(k),l}x_{\zeta(i(k)), \zeta(j(k)), l}\beta_{cost} + \epsilon_{km} &\text{ if } m=\text{cs}\\   
        \end{cases}   
\end{align}
where $P_{i(k),j(k),l}$ is the price of a carsharing ride between stations $i(k)$ and $j(k)$ when the price level is $l$ and is defined in \eqref{eq:price}. Furthermore, $\epsilon_{km}$ is randomly drawn from an extreme value type I distribution \citep{gumbel1960multivariate}. Observe that the utility function is affine in $x$ for $m=cs$ and constant in $x$ for $m\in\set{M}$.

\color{black}
\begin{table}[htbp]
\caption{Attributes and coefficients in the utility function. Real time values are obtained through Google Maps APIs. The coefficients are taken from \citet{becker2017modeling}.}
    \centering
    \scriptsize
    \begin{tabular}{l|c|c}
    \toprule
    Attribute &  Description & Value\\
    \hline
        $T_{k,cs}^V$ & In-vehicle travel time of customer $k$ by carsharing & Customer specific \\
        $T_{k,cs}^A$ & Access time of customer $k$ for carsharing & Customer specific\\
        $T_{k, b}$ & Travel time of customer $k$ by bike & Customer specific\\
        $T_{k, pt}^V$ &In-vehicle travel time of customer $k$ by public transport & Customer specific\\
        $T_{k, pt}^A$ &Access time of customer $k$ for public transport & A random value in $2-8$ min\\
        $T_{k, pt}^W$ &Waiting time of customer $k$ for public transport & A random value in $2-8$ min \\
        $T_{k, walk}$ &Travel time of customer $k$ by walking & Customer specific\\
        $C_{km}$ & Travel cost of customer $k$ taking mode $m \in \set{M}$ & 3.22 Euro for public transport, 0 for others.\\
    \midrule
    Coefficient &  Description & Value (Euro/h)\\
    \hline
    $\beta_{cs}^V$ & Coefficient for in-vehicle carsharing travel time & $-3.02$\\
    $\beta_{cs}^A$ & Coefficient for carsharing access time & $-0.70$ \\
    $\beta_b$ & Coefficient for biking time & $13.33$\\
    $\beta_{pt}^V$ & Coefficient for in-vehicle public transit time & $-5.05$ \\
    $\beta_{pt}^A$ & Coefficient for public transit access time & $-27.27$ \\
    $\beta_{pt}^W$ & Coefficient for public transit waiting time & $-4.63$ \\
    $\beta_{walk}$ & Coefficient for walking mode travel time & $-13.96$ \\
    $\beta_{cost}$ & Coefficient for travel costs (e.g., rental fees, ticket fees) & $-0.48$\\
    \bottomrule
    \end{tabular}
    \label{tab: para_traveller}
\end{table}

\section{Supplementary results on the performance of the algorithm}\label{app:small_results}
In this section we provide additional results on the performance of the \LSt algorithm. 

In \Cref{tab:compute_time} we report disaggregated results on the SMALL instances tested. In particular, we compare the performance of the \LSt against that of the solver. We limit our attention to problems containing only $5$ scenarios per distribution. The total number of scenarios becomes $625$, $3125$, and $15625$ in the instances with $3$, $4$, and $5$ zones, respectively. In \Cref{tab:compute_time} we report the solution time of the solver and \LSt for the instances for which an optimal solution has been found within a $10^{-4}$ optimality gap.
\begin{table}[!htbp]
  \centering
  \tiny
  \caption{Solution time (in seconds) for the solver and the \LSt on the SMALL instances with $5$ scenarios per distribution. Cells with ``-" indicate that the corresponding instance could not be solved to the desired $10^{-4}$ optimality gap within $3600$ seconds. }
    \begin{tabular}{cccccc|cc>{\color{black}}c}
    \toprule
    \multicolumn{3}{c}{Instance size} & \multirow{1}[4]{*}{\# Scenarios ($|\set{D}|\times5$)} & \multicolumn{2}{c|}{Time for problem with RPWoPA} & \multicolumn{3}{c}{Time for problem with RPWPA} \\
    \cmidrule{5-9}         
    $K$ & $|\set{V}|$ & $|\set{Z}|$ &   & Solver & \LSt  & Solver & \LS v1 & {\LS v2}\\
    \midrule
    20 & 40 & 3 & 625 $(125\times5)$ & 67.95 & 5.30 & 65.70 & 3.86 & {2.29} \\
    20 & 50 & 3 & 625 $(125\times5)$ & 69.32 & 1.22 & 73.40 & 1.62 & {1.27}  \\
    20 & 60 & 3 & 625 $(125\times5)$ & 63.27 & 0.02 & 62.23 & 0.04 & {0.08} \\
    40 & 40 & 3 & 625 $(125\times5)$ & 166.59 & 16.92 & 170.34 & 85.92 & {26.92} \\
    40 & 50 & 3 & 625 $(125\times5)$ & 135.09 & 5.79 & 129.82 & 39.76 & {12.06}\\
    40 & 60 & 3 & {625 $(125\times5)$} & 71.32 & 1.00 & 74.00 & 0.11 & {0.68} \\
    60 & 40 & 3 & {625 $(125\times5)$} & 71.40 & 25.12 & 314.89 & 238.35 & {45.13} \\
    60 & 50 & 3 & {625 $(125\times5)$} & 280.57 & 14.91 & 279.18 & 145.89 & {20.3} \\
    60 & 60 & 3 & {625 $(125\times5)$} & 287.18 & 7.45 & 291.75 & 65.69 & {8.19} \\
    \cmidrule{5-9}    
    \multicolumn{4}{c}{\textbf{3-zone Avg.}} & \textbf{156.97} & \textbf{8.64} & \textbf{162.37} & \textbf{64.58} & {\textbf{12.99}} \\
    \midrule
    20 & 40 & 4 & {3,125 $(625\times5)$} & 383.25 & 4.13 & 400.22 & 14.06 & {6.53} \\
    20 & 50 & 4 & {3,125 $(625\times5)$} & 453.77 & 2.85  & 407.52 & 7.31 & {4.56}\\
    20 & 60 & 4 & {3,125 $(625\times5)$} & 382.15 & 0.04 & 354.79 & 0.15 & {0.32}\\
    40 & 40 & 4 & {3,125 $(625\times5)$} & 955.09 & 36.14 & 888.07 & 135.24 & {55.83} \\
    40 & 50 & 4 & {3,125 $(625\times5)$} & 624.42 & 5.82 & 630.28 & 50.89 & {23.16}\\
    40 & 60 & 4 & {3,125 $(625\times5)$} & 477.98 & 2.56 & 449.81 & 8.80  & {3.29}  \\
    60 & 40 & 4 & {3,125 $(625\times5)$} & 2120.44 & 86.75 & 2152.84 & 890.92 & {108.18}\\
    60 & 50 & 4 & {3,125 $(625\times5)$} & 1669.76 & 49.78 & 2102.88 & 515.07 & {84.87} \\
    60 & 60 & 4 & {3,125 $(625\times5)$} & 1810.75 & 26.51 & 1560.01 & 221.48 & {38.13} \\
    \cmidrule{5-9}    
    \multicolumn{4}{c}{\textbf{4-zone Avg.}} & \textbf{986.40} & \textbf{23.84} & \textbf{994.05} & \textbf{204.88} &  {\textbf{36.10}} \\
    \midrule
    20 & 40 & 5 & {15,625 (3,125$\times5)$} & 2279.69 & 6.99 & 2027.68 & 29.55 & {9.07}\\
    20 & 50 & 5 & {15,625 (3,125$\times5)$} & 2047.30 & 5.41 & 1948.85 & 14.70 & {8.86} \\
    20 & 60 & 5 & {15,625 (3,125$\times5)$} & 1666.58 & 0.15 & 1665.62 & 0.52 &{1.53} \\
    40 & 40 & 5 & {15,625 (3,125$\times5)$} & -     & 36.80 & -     & 107.79 & {64.91} \\
    40 & 50 & 5 & {15,625 (3,125$\times5)$} &-     & 16.46 & -     & 47.92 & {61.52} \\
    40 & 60 & 5 & {15,625 (3,125$\times5)$} & 2274.05 & 4.18 & -     & 6.53  &{5.06} \\
    60 & 40 & 5 & {15,625 (3,125$\times5)$} & -     & 144.75 & -     & 1760.74 & {287.42} \\
    60 & 50 & 5 & {15,625 (3,125$\times5)$} & -     & 135.76 & -     & 850.45 & {205.61}\\
    60 & 60 & 5 & {15,625 (3,125$\times5)$} &-     & 37.84 & -     & 303.67 & {90.75}\\
    \cmidrule{5-9}    
    \multicolumn{4}{c}{\textbf{5-zone Avg.}} & \textbf{2066.91} & \textbf{43.15}  & \textbf{1880.71} & \textbf{346.87} & {\textbf{81.64}} \\
\midrule
 \multicolumn{4}{c}{\textbf{Avg. for all solved instances}} &\textbf{843.54} & \textbf{14.05} & \textbf{764.28} & \textbf{177.62} &{\textbf{43.57}}\\
    \bottomrule
    \end{tabular}%
  \label{tab:compute_time}%
\end{table}%

We observe that the \LSt and its variant versions solve all instances, while the solver fails to deliver optimal solutions in $6$ and $5$ instances with $3125$ distributions under RPWPA and RPWoPA formulations, respectively. 
For the instances that both the solver and the \LSt algorithm could solve to optimality we observe that the \LSt method was significantly faster. The performance gap is more pronounced when solving the problem with RPWoPA. The \LSt method solves all instances within $14.05$ seconds on average, while the solver takes $843.54$ seconds. \color{black} For the problem with RPWPA, the \LS v2 version of the method, i.e., with the addition of cuts \eqref{form: LP_cuts_tp_dual} as valid inequalities, significantly accelerates convergence. In particular, the addition of valid inequalities \eqref{form: LP_cuts_tp_dual} decrease the average solution time from $177.62$ seconds to $43.57$ seconds (see the \LS v1 and \LS v2 columns, respectively).
\color{black}
For the instances that cannot be solved to optimality by the solver (reported with ``-" in \Cref{tab:compute_time}), the average optimality gap is $11.05\%$ and $7.43\%$ for the RPWPA and RPWoPA formulations, respectively. Optimality gaps as percentages are calculated as ($\vert$\texttt{best\_bound}-\texttt{best\_objective\_value}$\vert$/$\vert$\texttt{best\_objective\_value}$\vert \times 100$). 
It is therefore evident that the performance of the \LSt algorithm is superior to that of the solver already on the smallest instances generated.

Finally, in \Cref{fig:optimality_gaps}. we report the distribution of optimality gaps for both the RPWoPA and the RPWPA across all instances. \color{black} For the version with RPWPA, we presents optimality gaps delivered by both the \LS v1 the \LS v2. Is is evident that the addition of valid inequalities \eqref{form: LP_cuts_tp_dual} (i.e., using \LS v2) reduces the optimality gap in most instance groups compared to \LS v1 (see \Cref{fig:pva_15_initial}-\Cref{fig:pva_35}). \color{black} On the SMALL instances, under both RPWoPA and RPWPA versions, the median optimality gap is always $0\%$, as illustrated in \Cref{fig:pva_15,fig:tpsp_15}. Only in a few cases it is higher than $5\%$. The median optimality gap is always lower than $5\%$ (see, \Cref{fig:tpsp_24,fig:tpsp_35}) for all the MEDIUM and LARGER instances under the RPWoPA formulation. 
The LARGE instances of the RPWPA formulation appear slightly more challenging. The median optimality gap is between $5\%$ and $15\%$, see \Cref{fig:pva_24,fig:pva_35}). This is mainly due to the fact that the cuts used with an IP problem in the second decision stage are typically weaker than those used with a LP problem in the second decision stage. 
\color{black}

\begin{figure}[!htbp]
    \centering
    \begin{subfigure}[h]{0.325\textwidth}
        \centering
        \includegraphics[width=\textwidth]{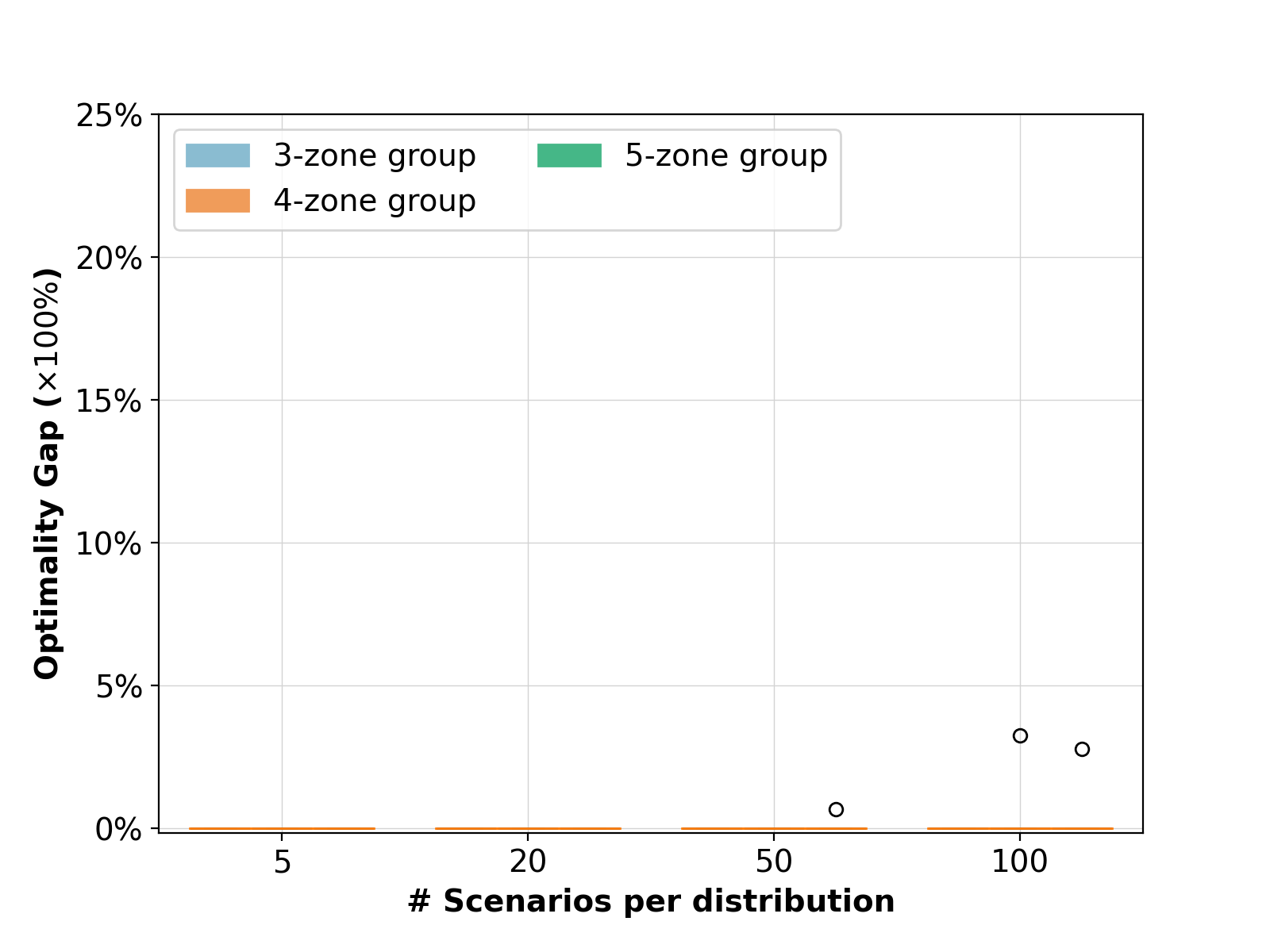}
        \caption{RPWoPA, SMALL.}
        \label{fig:tpsp_15}
    \end{subfigure}
    \hfill
    \begin{subfigure}[h]{0.325\textwidth}
        \centering
        \includegraphics[width=\textwidth]{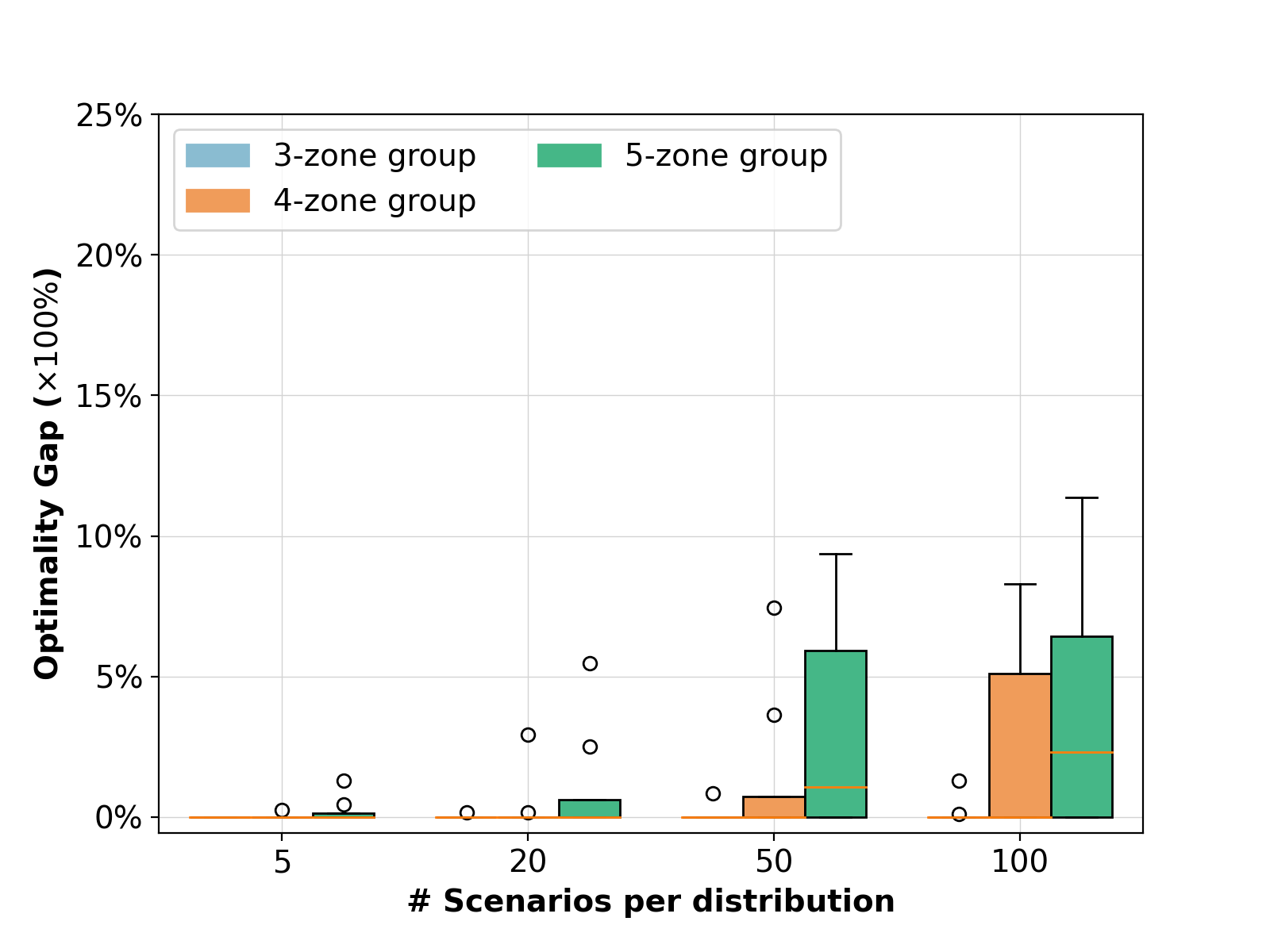}
        \caption{RPWoPA, MEDIUM.}
        \label{fig:tpsp_24}
    \end{subfigure}
    \hfill
    \begin{subfigure}[h]{0.325\textwidth}
        \centering
        \includegraphics[width=\textwidth]{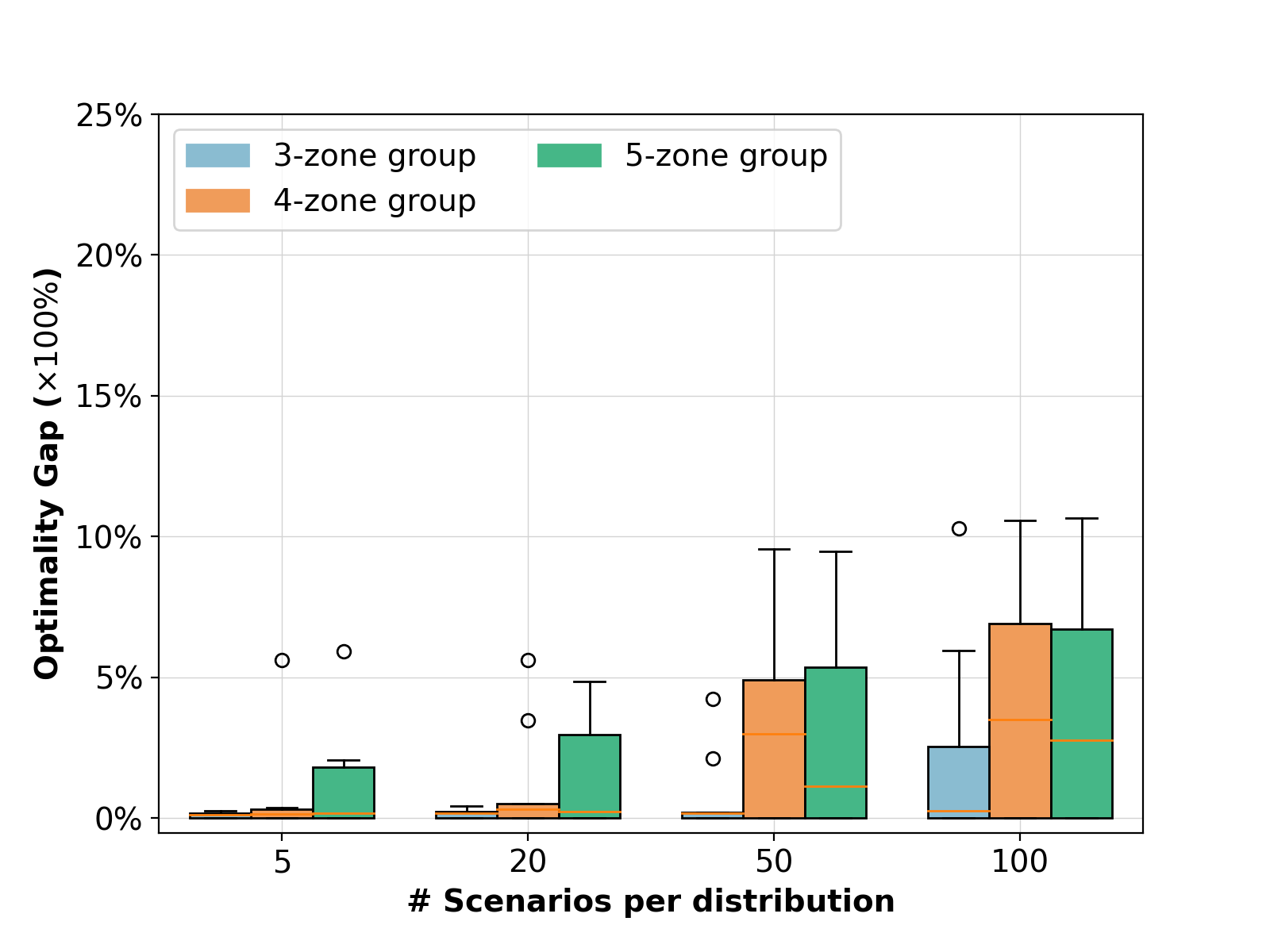}
        \caption{RPWoPA, LARGE.}
        \label{fig:tpsp_35}
    \end{subfigure}

    \begin{subfigure}[h]{0.325\textwidth}
        \centering
        \includegraphics[width=\textwidth]{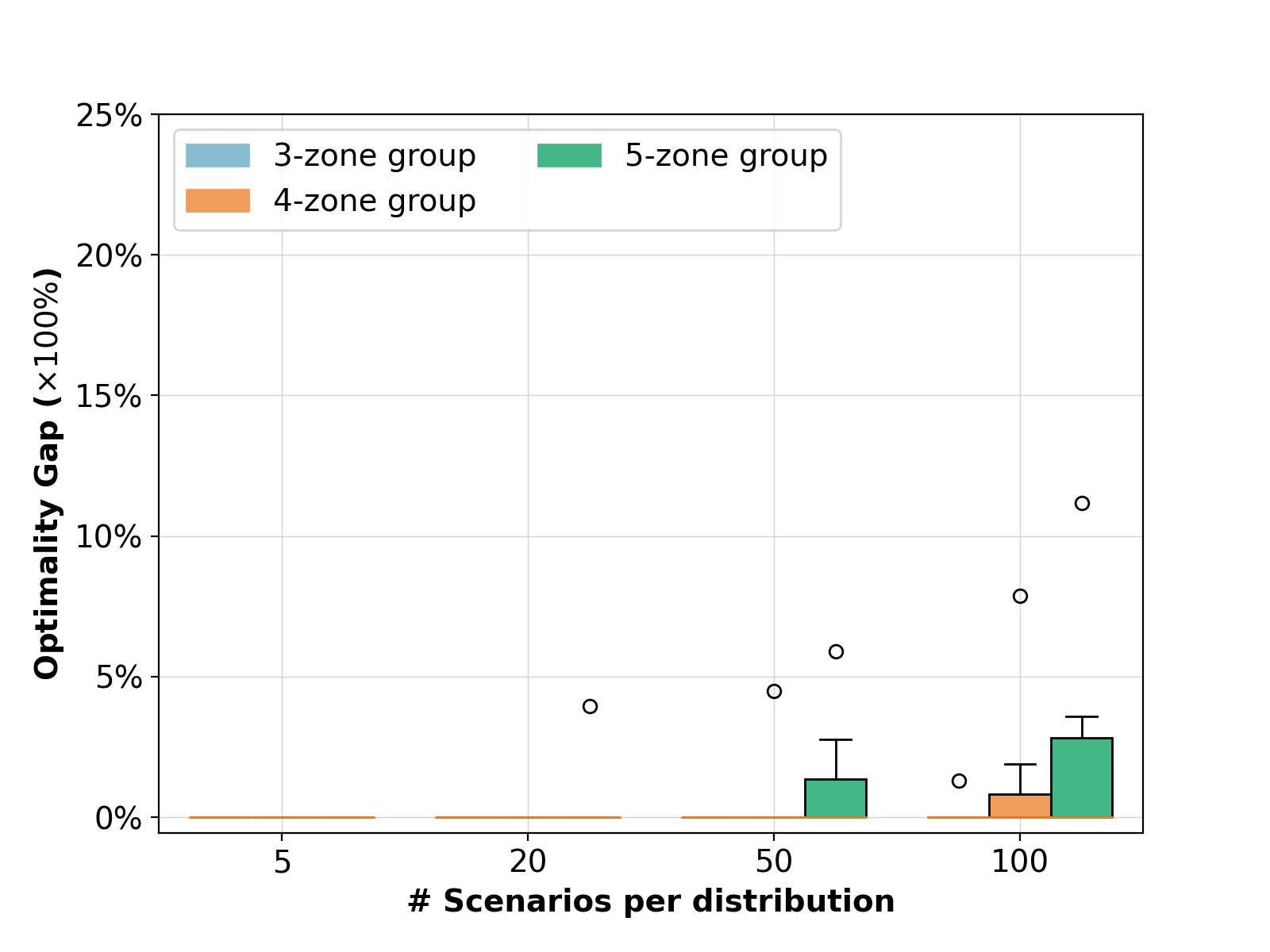}
        \caption{RPWPA, SMALL (\LS v1).}
        \label{fig:pva_15_initial}
    \end{subfigure}
    \hfill
    \begin{subfigure}[h]{0.325\textwidth}
        \centering
        \includegraphics[width=\textwidth]{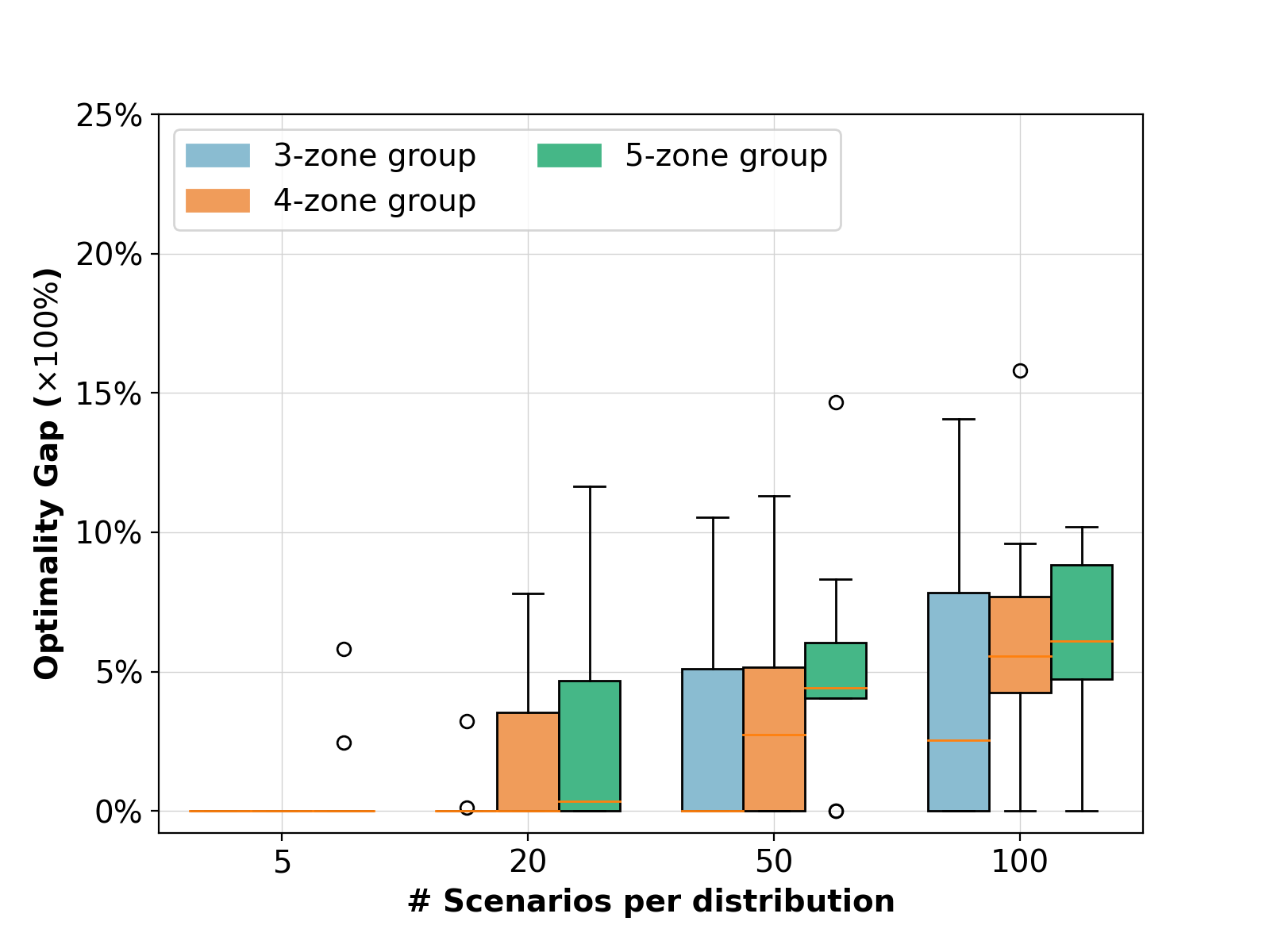}
        \caption{RPWPA, MEDIUM (\LS v1).}
        \label{fig:pva_24_initial}
    \end{subfigure}
    \hfill
    \begin{subfigure}[h]{0.325\textwidth}
        \centering
        \includegraphics[width=\textwidth]{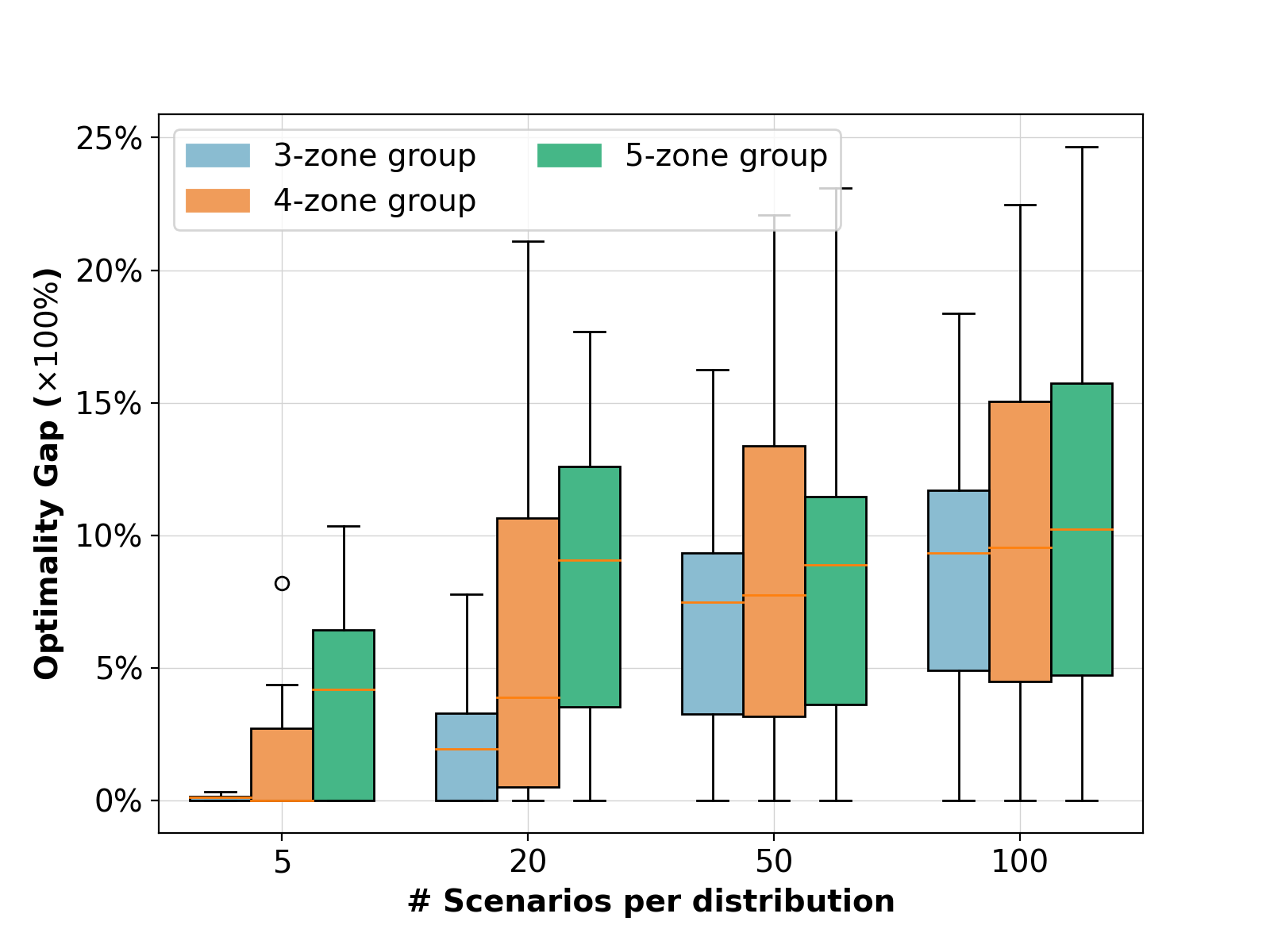}
        \caption{RPWPA, LARGE (\LS v1).}
        \label{fig:pva_35_initial}
    \end{subfigure}
    
    \begin{subfigure}[h]{0.325\textwidth}
        \centering
        \includegraphics[width=\textwidth]{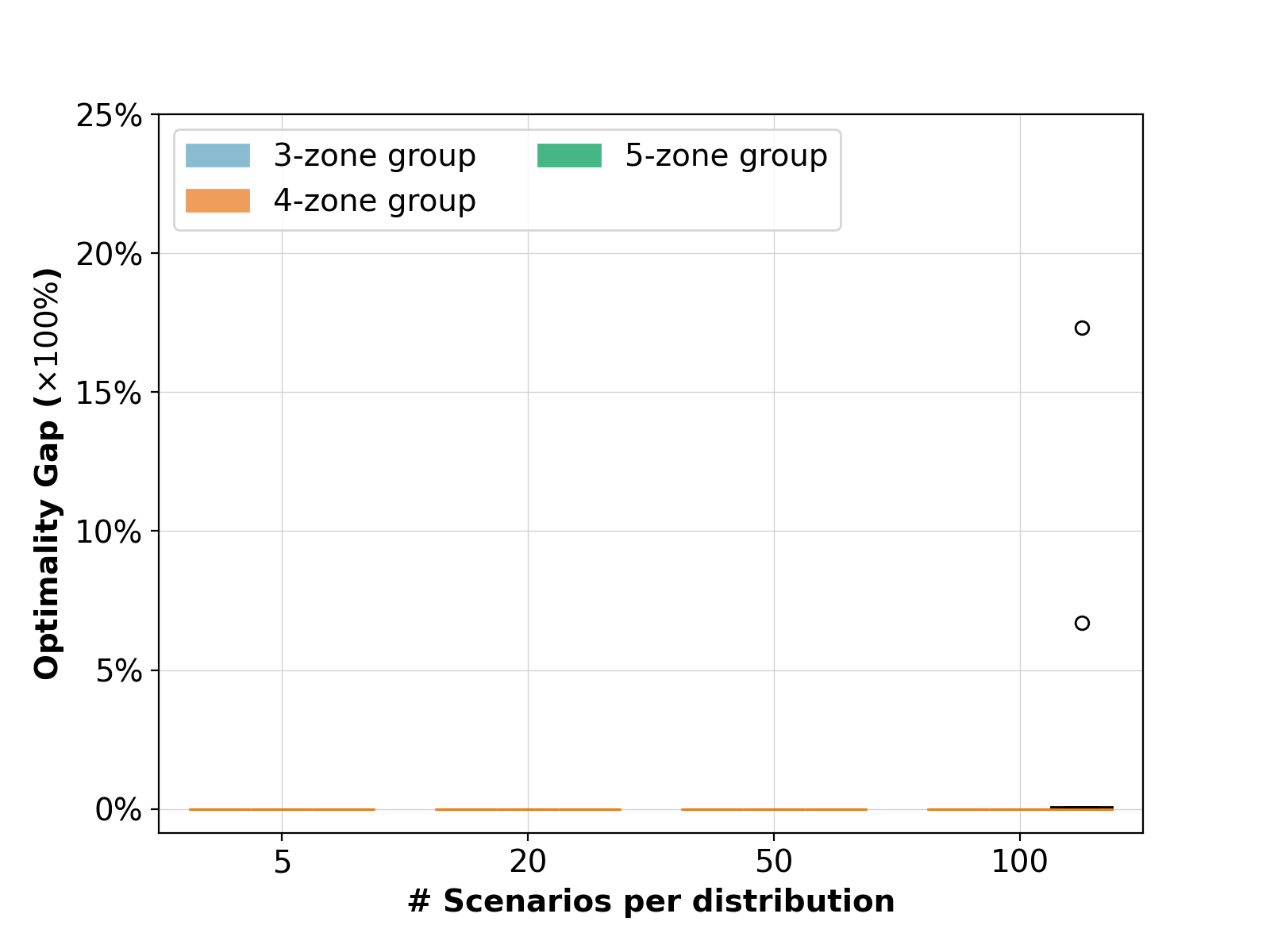}
        \caption{RPWPA, SMALL (\LS v2).}
        \label{fig:pva_15}
    \end{subfigure}
    \hfill
    \begin{subfigure}[h]{0.325\textwidth}
        \centering
        \includegraphics[width=\textwidth]{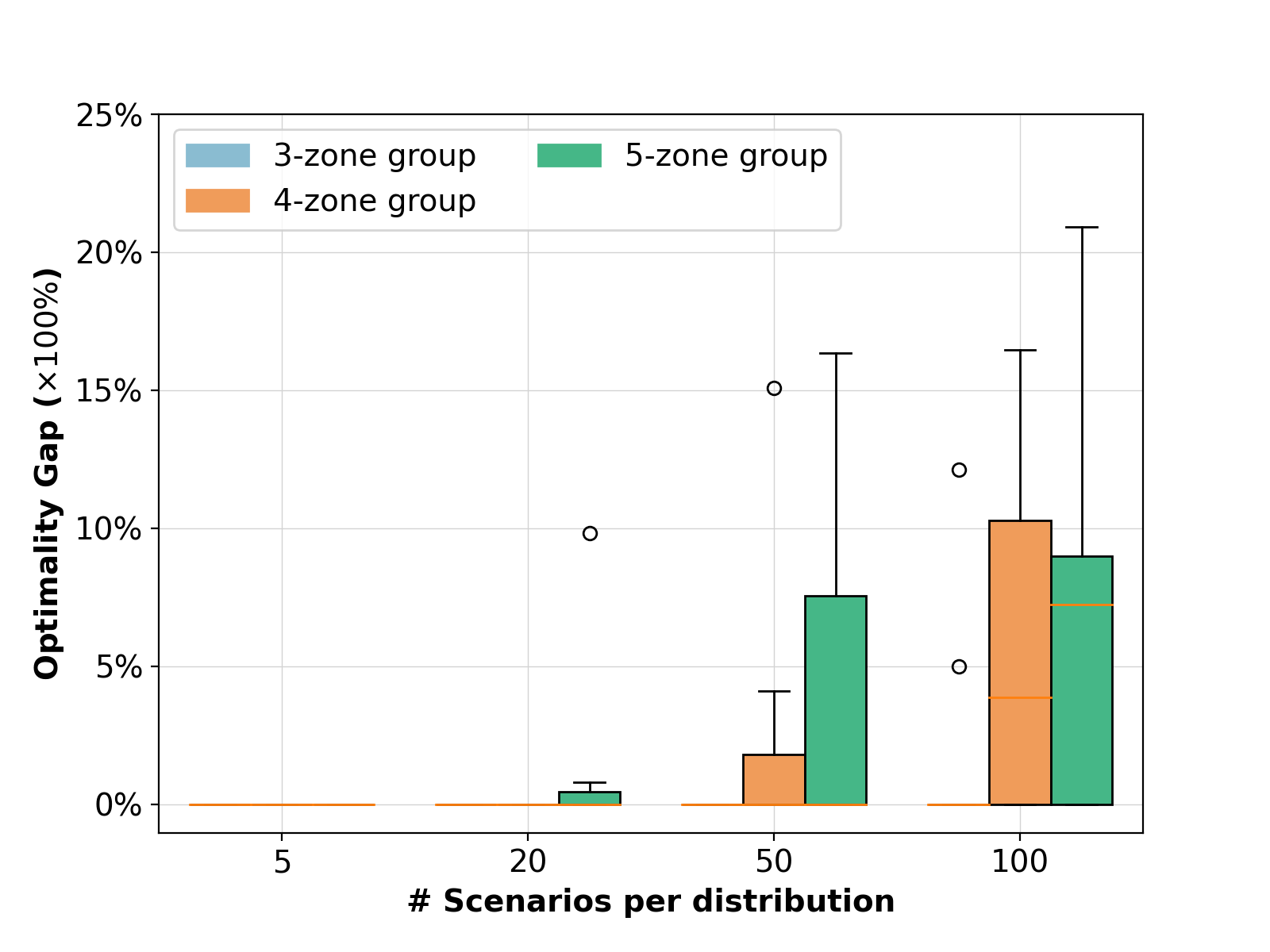}
        \caption{RPWPA, MEDIUM (\LS v2).}
        \label{fig:pva_24}
    \end{subfigure}
    \hfill
    \begin{subfigure}[h]{0.325\textwidth}
        \centering
        \includegraphics[width=\textwidth]{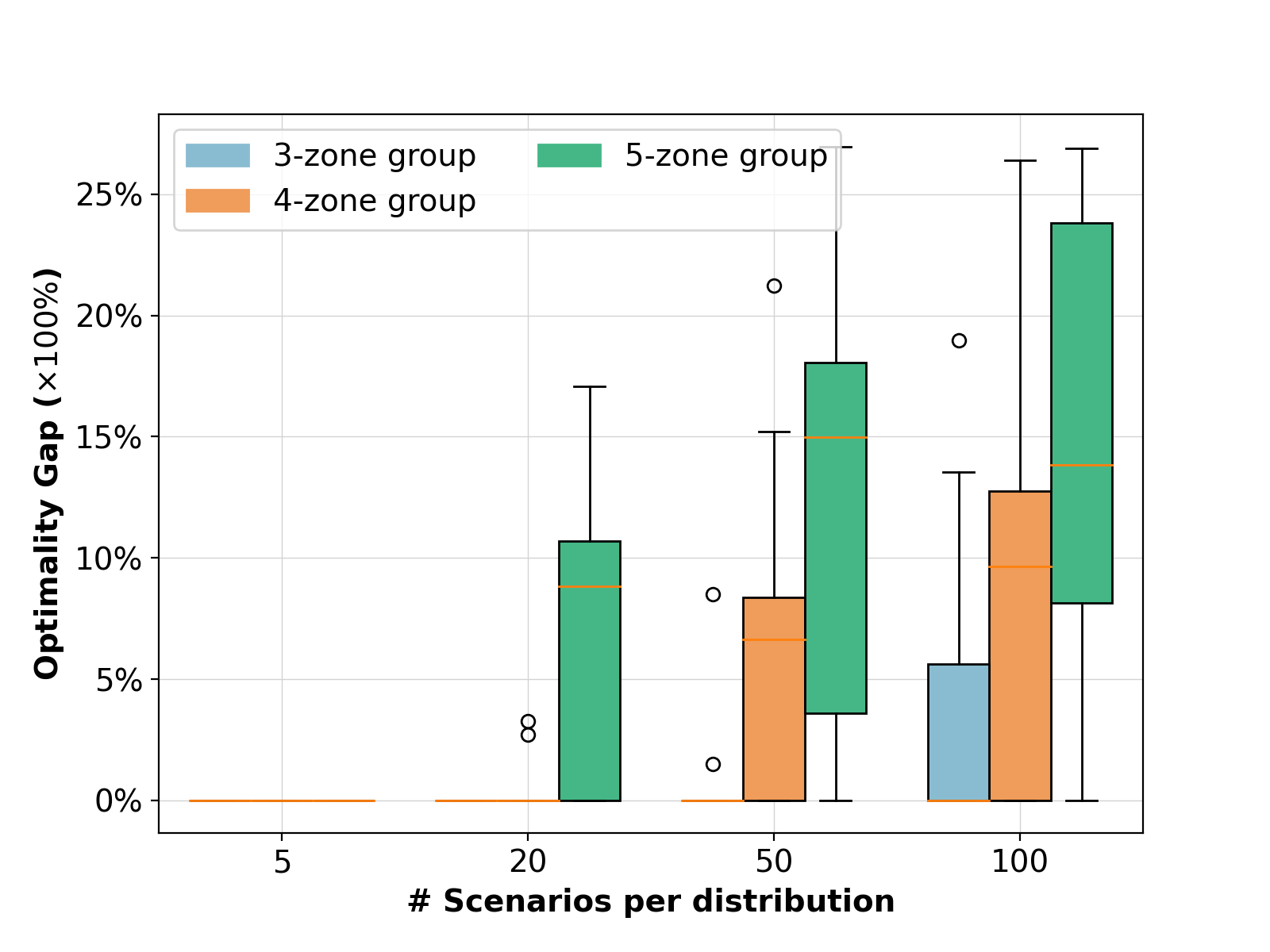}
        \caption{RPWPA, LARGE (\LS v2).}
        \label{fig:pva_35}
    \end{subfigure}
    \caption{Distributions of optimality gap for the instances solved by the \LSt method for the problems with RPWoPA and RPWPA, categorized by number of zones and number of scenarios for SMALL, MEDIUM and LARGE instances. The orange line in each boxplot represents the median.}
    \label{fig:optimality_gaps}
\end{figure}

\end{APPENDICES}

\end{document}